\pgfplotsset{compat=1.11}
\DeclareSymbolFont{cyrillic}{T2A}{cmr}{m}{n}
\DeclareMathSymbol{\D}{\mathalpha}{cyrillic}{196}
\DeclareMathOperator*{\esssup}{ess\,sup}
\DeclareMathOperator*{\essinf}{ess\,inf}
\theoremstyle{plain}% default
\newtheorem{theorem}{Theorem}[section]
\newtheorem{lemma}[theorem]{Lemma}
\newtheorem{proposition}[theorem]{Proposition}
\theoremstyle{definition}
\theoremstyle{remark}
\newtheorem{remark}[theorem]{Remark}%[section]
\def\namedlabel#1#2{\begingroup
   #2%
 \def\@currentlabel{#2}%
   \phantomsection\label{#1}\endgroup
}
\newcommand{\floor}[1]{\lfloor#1\rfloor}
\newcommand{\set}[1]{\left\{#1\right\}}
\newcommand{\abs}[1]{\left|#1\right|}
\newcommand*{\math@version@bold}{bold}
\DeclareMathOperator\DD{% or \DeclareMathOperator*\DD
  \textrm{%
    \usefont{T2A}{cmr}{\ifx\math@version\math@version@bold bx\else m\fi}{n}%
    \CYRD
  }%
}
\def\R{\ensuremath{\mathbb R}}
\def\T{\ensuremath{\mathbb T}}
\def\N{\ensuremath{\mathbb N}}
\def\Z{\ensuremath{\mathbb Z}}
\def\I{\ensuremath{{\bf 1}}}
\def\C{\ensuremath{\mathcal C}}
\def\A{\ensuremath{A^{(q)}}}
\def\X{\mathcal{X}}
\def\dist{\ensuremath{\text{dist}}}
\numberwithin{equation}{section}
\begin{document}

\title[\hfill\protect\parbox{0.975\linewidth}{Extremes  and extremal indices for level set  observables  on hyperbolic\\ systems.}]{Extremes  and extremal indices for level set  observables  on hyperbolic systems.}

\author[M. Carney]{Meagan Carney}
\address{Meagan Carney\\ Max Planck Institute for Physics of Complex Systems\\
Nonlinear Dynamics and Time Series Analysis\\
Dresden\\
Germany} \email{meagan@pks.mpg.de}
%\urladdr{http://www.math.uh.edu/~meagan/}

\author[M. Holland]{Mark Holland}
\address{Mark Holland\\ Department of Mathematics (CEMPS)\\
Harrison Building (327)\\
North Park Road\\
Exeter, EX4 4QF\\ UK} \email{M.P.Holland@exeter.ac.uk}
\urladdr{http://empslocal.ex.ac.uk/people/staff/mph204/}

\author[M. Nicol]{Matthew Nicol}
\address{Matthew Nicol\\ Department of Mathematics\\
University of Houston\\
Houston\\
TX 77204\\
USA} \email{nicol@math.uh.edu}
\urladdr{http://www.math.uh.edu/~nicol/}

\thanks{MC and MN were supported in part by NSF Grant DMS 1600780. MH acknowledges support of  EPSRC grant: EP/P034489/. MC thanks the Max Planck Institute-PKS, Dresden, for their hospitality while part of this work was completed.}

\date{\today}

\keywords{Extreme Value Theory, Return Time Statistics, Stationary Stochastic Processes, Metastability} \subjclass[2000]{37A50, 60G70, 37B20, 60G10, 37C25.}

%37A50  	Relations with probability theory and stochastic processes
%60G70  	Extreme value theory; extremal processes
%37B20  	Notions of recurrence
%60G10  	Stationary processes
%37C25  	Fixed points, periodic points, fixed-point index theory
%--------
%37A25  	Ergodicity, mixing, rates of mixing
%37D25  	Nonuniformly hyperbolic systems (Lyapunov exponents, Pesin theory, etc.)
%37D35  	Thermodynamic formalism, variational principles, equilibrium states
%37C40  	Smooth ergodic theory, invariant measures

\begin{abstract}
Consider an ergodic  measure preserving dynamical system $(T,X,\mu)$, and  an observable 
$\phi:X\to\mathbb{R}$. For the time series $X_n(x)=\phi(T^{n}(x))$, we establish limit
laws for the maximum process $M_n=\max_{k\leq n}X_k$ in the case where $\phi$ is an 
observable maximized on a curve or submanifold, and $(T,X,\mu)$ is a hyperbolic dynamical system.

Such observables arise naturally in weather and climate applications.
We consider the extreme value laws and extremal indices for these observables on
  hyperbolic toral automorphisms, Sinai dispersing billiards and coupled expanding maps. In particular 
  we obtain clustering and nontrivial extremal indices due to self intersection of submanifolds
  under iteration by the dynamics, not arising from any periodicity.

\end{abstract}

\maketitle

\section{Introduction}

 Suppose we have a time-series $(X_n)$ of real-valued random variables defined on a probability space $(X,\mu)$ and let $M_n:=\max \{X_1,\ldots, X_n\}$ be the sequence of successive maxima of $(X_i)$. There is a well-developed theory for these maximum values in the setting of $(X_n)$ i.i.d \cite{Embrechts, Galambos}.  If we consider a dynamical system $(T,X,\mu)$ such that $T\colon X\rightarrow X$ and an observable $\phi: X\to \R$, we can define a stochastic process by 
\[
X_n = \phi\circ T^n (x)
\]
for $x\in X$. In the case of modeling deterministic physical phenomenon, $T$ is usually taken as an ergodic, measure-preserving transformation, $\mu$ a probability measure and $\phi$ is a function with  some regularity, for example (locally) H\"{o}lder~\cite{V.et.al}. In extreme value literature, it is typically assumed that $\phi$ is a function of the distance $d(x,p)$ to a distinguished point $p$ for some metric $d$ so that $\phi (x)=f(d(x,p))$ for $x\in X$, and $f$ is a monotone decreasing function $f:(0,\infty)\to\mathbb{R}$. In this instance
$\sup_{x\in X}\phi(x)=\lim_{x\to p}\phi(x)$, and hence the set $\{\phi(x)\geq u\}$ corresponds to a neighborhood about  $p$. We shall refer to the set of all points $x\in X$ for which $\phi(x)$ achieves its maximum (with $\sup_{x\in X}\phi(x)=\infty$ allowed) as the extremal set $\mathcal{S}$. For convenience (and almost by convention) the observation $\phi (x)=-\log d(x,p)$ is often used, but scaling relations translate extreme value results for one functional form to another quite easily provided the extremal set of $\phi$ is unchanged. If the observable $\phi (x)=-\log d(x,p)$ is changed to another function of $d(x,p)$, then $\mathcal{S}$ remains equal to $\{p\}$. 
However, if the underlying extremal set $\mathcal{S}$ is changed, e.g. going from a point to a curve, then the proofs of extreme value results and the results themselves do not translate and new approaches are required. Indeed, even if the extremal set changes from one point to another, then the  extreme value laws may change (e.g. $p$ periodic versus $p$ non-periodic give different distributional extreme value laws)~\cite{Dichotomy,Ferguson_Pollicott,FFT3,Keller}.

Since the value of the  function $\phi\circ T^n (x)$ is larger the closer $T^n (x)$ is to the extremal set $\mathcal{S}$, there is a close relation between extreme value statistics for the time series $X_n=\phi\circ T^n(x)$ and return-time statistics to nested sets about $\mathcal{S}$~\cite{CC,Collet,DGS,FFT1,FFT3,Gupta,Hirata}. We focus on extreme value theory in this 
paper but it would be possible, though computationally very difficult,  to derive return time distributions which are simple Poisson (in the 
cases in which the extremal index is $\theta=1$) and  compound Poisson (in the cases in which the extremal
index $\theta<1$). The parameters in the compound Poisson distribution would in particular be difficult to compute but this would constitute an interesting investigation.
We expect this work could be carried out using basically the same toolkit from extreme value theory.
These parameters are calculated for  functions maximized at  periodic orbits in the setting of a hyperbolic toral automorphism~\cite{Dichotomy} 
and in~\cite{CNZ} for  functions maximized at  periodic orbits in Sinai dispersing billiard systems.  We discuss the concept of extremal index below, it is a number
$0 \le \theta \le 1$ which roughly quantifies the clustering of exceedances. We will say $\theta=1$ is a trivial
extremal index and $\theta<1$ a nontrivial extremal index.  For results along these lines see~\cite{Dichotomy,FFT3,FHN}.

Recent literature has focused on the case where the extremal set $\mathcal{S}$ is a single point $\{p\}$. In this paper we address some scenarios of interest where the observable is maximized on sets other than unique points in phase space, and in turn
describe how the extreme value law depends on the geometry of $\mathcal{S}$. We also describe a dynamical mechanism giving rise to a nontrivial extremal index which is not due to  periodicity. The recent preprint~\cite{Haydn_Vaienti} provides a different and axiomatic
approach to determining the limit laws (especially simple and compound Poisson distributions) for entry times into neighborhoods of sets of measure zero in dynamical systems. They present similar results to this paper on coupled map lattices and consider other dynamical and statistical examples, including some systems with polynomial decay of correlations.  We address here cases that are not easily
captured by axiomatic approaches. This happens for example, if the extremal set $\mathcal{S}$ fails certain transversality assumptions 
relative to the local (or global) stable and unstable manifolds of the system. We discuss these situations further in Sections 
\ref{sec.anosov} and \ref{sec.discussion}.

\subsection{Background on extremes for dynamical systems}\label{sec.background}
Suppose $(X_n)$ is a  stationary process  with probability distribution function $F_X(u):=\mu(X\leq u).$ We define an extreme value law (EVL) in the following way. Given $\tau\in\mathbb{R}$, let $u_n(\tau)$ be a sequence satisfying $n\mu(X_0>u_{n}(\tau))\to\tau$, as $n\to\infty$. We say that $(X_n)$ satisfies an extreme value law if 
\begin{equation}\label{eq.ev-law}
\mu(M_n\leq u_n(\tau))\to e^{-\theta\tau}
\end{equation}
for some $\theta\in(0,1]$. Here, $\theta$ is called the extremal index and $\frac{1}{\theta}$ roughly measures the average number of exceedances in a time window 
given that one exceedance has occurred.  When $(X_n)$ is  i.i.d. and   has a regularly varying tail it can be shown that this limit exists and $\theta=1$.

In the dependent setting for stationary $(X_n)$ the existence of an EVL has been shown provided dependence conditions $D(u_n)$ (mixing condition) and $D'(u_n)$ (recurrence condition) or similar conditions hold for the system~\cite{LLR,FFT1}.  Freitas et al~\cite{FnF}, based on Collet's work,  in turn gave  a condition $D_2(u_n)$ which has the full force of $D(u_n)$  in that together with $D'(u_n)$ it  implies the existence of an EVL and is easier to check in the dynamical setting. We describe 
more precisely these three conditions below.

%If $\{X_n\}$ is a stationary sequence we  let $\{\hat X_n\}$ denote the  associated stationary, independent sequence, that is 
 %$\{\hat X_n\}$ is independent and $\hat X_1$ has the same distribution as $X_1$.  We  denote the corresponding
 %derived sequence of maxima for $\{ \hat X_n \}$ by $\{ \hat M_n \}$. For $v\in \R$ and sequences $a_n$, $b_n$  we define $u_n (v)=\frac{v}{a_n}+b_n$
 %so that $P(M_n \le u_n (v))=P( a_n(M_n-b_n)\le v)$. For fixed $v$ we will often drop the dependence upon $v$ and write simply the  sequence $u_n$. 
% Leadbetter~\cite{Leadbetter} gives two conditions called $D(u_n)$ and $D'(u_n)$ for suitable sequences $u_n$ which imply that  
 %$P(M_n \le u_n (v) )\to G(v)$ is equivalent to $P(\hat{M_n} \le u_n (v))\to G(v).$ If $\{X_n\}$ satisfies  condition $D(u_n)$ then the 
 %stochastic process satisfies the law of types.  Moreover it is known that 
 %\begin{eqnarray*}\label{tactic}
% nP(X_0 >  u_n)\to\tau
 %\end{eqnarray*}
 %is equivalent to $P(M_n\le u_n)\to e^{-\tau}$ when $D(u_n)$ and $D'(u_n)$ hold,  and hence we have a strategy for determining the extreme type distribution for dependent sequences. 

There are, however, no general techniques for proving conditions $D_2(u_n)$ and $D'(u_n)$ and checking the latter is usually hard. $D'(u_n)$ is a short returns condition that is not implied by
an exponential decay of correlations. However $D_2(u_n)$ often follows from a suitable rate of decay of correlations.
Collet~\cite{Collet}  used the rate of decay of correlation of H\"{o}lder observations to establish $D(u_n)$ for certain one-dimensional 
non-uniformly expanding  maps.
%   Freitas et al~\cite{FnF}, based on Collet's work,  in turn gave  a condition $D_2(u_n)$ which has the full force of $D(u_n)$ 
%in that together with $D'(u_n)$ it  implies the existence of an EVL.
%ensures  the equivalence of  $P(M_n \le u_n (v)  ) \rightarrow G(v)$  and 
%$P( \hat M_n \le u_n (v) )\rightarrow G(v)$.
Condition $D_2(u_n)$  is  easier to establish in the dynamical setting by estimating the  rate of decay of correlations of H\"older continuous observables 
or those of bounded variation and in practice is easier to verify.

For completeness we  now state conditions $D(u_n)$, $D_2(u_n)$ and $D^{'}(u_n)$.  If $\{X_n\}$ is a stochastic process define 
\[
M_{j,l} := \max\{X_j, X_{j+1}, \dots, X_{j+l}\}.
\]
 We will often write $M_{0,n}$ as $M_n$. We write $F_{i_1,\ldots,i_n} (u)$ for the joint distribution $F_{i_1,\ldots,i_n} (u)=\mu (X_{i_1} \le u, X_{i_2} \le u,\ldots, 
 X_{i_n} \le u)$.

\noindent {\bf Condition $D (u_n)$~\cite{LLR}} We say condition $D(u_n)$ holds for the sequence $X_0,X_1,\ldots, $ if for any integers
$i_1<i_2<\ldots < i_p< j_1<j_2<\ldots < j_{p'}\le n$, for which $j_1-i_p>t$
we have 
\[
|F_{i_1,i_2,\ldots, i_p, j_1,j_2,\ldots, j_{p'}} (u_n) -F_{i_1,i_2,\ldots, i_p} (u_n) F_{ j_1,j_2,\ldots, j_{p'}} (u_n)| \le \gamma(n,t)
\]
where $\gamma (n,t)$ is non-increasing in $t$ for each $n$ and $n\gamma(n,t_n)\to 0$ as $n\to \infty$ for some sequence $t_n=o(n)$, $t_n\rightarrow \infty$.

\noindent {\bf Condition $D_2 (u_n)$~\cite{FnF}} We say condition $D_2(u_n)$ holds for the sequence $X_0,X_1,\ldots, $ if for any integers $l$,$t$ and $n$
\[
|\mu ( X_0 >u_n, M_{t,l}  \le u_n )-\mu (X_0 >u_n)\mu ( M_{l} \le u_n)| \le \gamma(n,t)
\]
where $\gamma (n,t)$ is non-increasing in $t$ for each $n$ and $n\gamma(n,t_n)\to 0$ as $n\to \infty$ for some sequence $t_n=o(n)$, $t_n\rightarrow \infty$.

\noindent {\bf Condition $D^{'} (u_n)$~\cite{LLR}} We say condition $D^{'}(u_n)$ holds for the sequence $X_0,X_1,\ldots, $ if 
\begin{equation}\label{cond:dprime}
\lim_{k\to \infty}\limsup_n n\sum_{j=1}^{[n/k]}\mu(X_0>u_n,X_j>u_n)=0.
\end{equation}
Condition $D^{'} (u_n)$ controls the measure of the set of points of $(X_0>u_n)$ which return to the set 
relatively quickly, and is a condition that rules out  ``short returns''.  It is not a consequence of exponential decay of correlations and usually dynamical and 
geometric arguments are needed to verify Condition $D^{'} (u_n)$ in specific cases.

 In the dynamical case if the time series of observations $X_n=\phi\circ T^n$  satisfy $D(u_n)$ (or  $D_2(u_n)$) and $D'(u_n)$ 
(or some variation thereof) then an EVL holds.  In these results, we have extremal index $\theta=1$ for observables of the form $\phi (x)=f(d(x,p))$, maximized at generic $p\in X$ provided $p$ is non-periodic~\cite{Dichotomy,Ferguson_Pollicott,FFT3,FHN,HNT0,Keller}. For periodic $p$, EVLs have been derived for these systems with index $\theta< 1$~\cite{CNZ,Dichotomy,Ferguson_Pollicott,FFT3,Keller,V.et.al}.

For statistical estimation and fitting schemes such as block maxima or peak over thresholds methods \cite{Embrechts}, it is desirable to get a limit along linear sequences of the form $u_n(y)=y/a_n+b_n$. Here the emphasis is changed and the sequence $u_n(y)$ is now required to be linear in $y$. For example suppose $\phi(x)=-\log x$ is 
an observable on the doubling map of the interval $[0,1]$, $Tx=(2x)$ mod $1$, which preserves Lebesgue measure $\mu$. The condition $n\mu (\phi > u_n(y))=y$ implies
$u_n(y)=\log n-\log y$. Furthermore we know that $n\mu (\phi > u_n(y))=y$ implies $\mu (M_n \le u_n(y))\to e^{-y}$.  This is a nonlinear scaling. 
If we change variables to $Y=-\log y$ we obtain $n\mu (-\log x > Y+\log n)\to e^{-y}=e^{-e^{-Y}}$, a Gumbel law.

In general if we restrict to linear scalings $y\in\mathbb{R}$, we obtain a limit $n\mu (X_0>\frac{y}{a_n}+b_n)\to h(y)$ and hence
\[
\mu(a_n(M_n-b_n)\le y)\to e^{-h(y)} =G(y),\quad(n\to\infty).
\]
For i.i.d processes, if $G$ exists and is non-degenerate, then it takes three distinct forms $G(y)=e^{-h(y)}$ with either:
\begin{itemize}
\item[(i)] $h(y)=e^{-y}$, $y\in\mathbb{R}$ (Gumbel);
\item[(ii)] $h(y)=y^{-\alpha}$, $y>0$ and some $\alpha>0$ (Fr\'echet); 
\item[(iii)] $h(y)=(-y)^{\alpha}$, $y<0$ and some $\alpha>0$ (Weibull).
\end{itemize}
These three forms can be combined into a unified \emph{generalized extreme
value} (GEV) distribution (up to scale and location $u\to \frac{u-\alpha}{\sigma}$):
\begin{equation}\label{eq.gevlimit}
G_{\xi}(y)=
\begin{cases}
\exp\{-(1+\xi y)^{-\frac{1}{\xi}}\},\text{ if $\xi\neq 0$};\\
\exp\{-e^{-y}\},\text{ if $\xi= 0$}.
\end{cases}
\end{equation}
The case $\xi=0$ corresponds to the Gumbel distribution, $\xi>0$ corresponds to a Fr\'echet distribution, 
while $\xi<0$ corresponds to a Weibull distribution.  %The GEV fit may be renormalized under scale and place transformations so that the extremal index is one, so numerical fitting schemes to the GEV do not detect an extremal index. 

%{\bf Meagan- here could you describe how to detect an EI numerically?}

Numerical fitting schemes for the GEV distribution are renormalized under place and scale transformations so that the extremal index (EI) is $\theta=1$~\cite[Theorem 5.2]{Coles}. Although it is theoretically possible to recover the EI by considering it as a function of these transformations, estimates in this way would have an undetectable level of error. Techniques to directly compute the EI, referred to as  \textit{blocks} and \textit{runs} estimators, have been proposed \cite[Section 3.4]{V.et.al}. Both methods utilize the definition of the EI (outlined above) by numerically estimating the ratio of the number of exceedances in a cluster to the total number of exceedances. Where these differ is in their definitions of a cluster; the runs estimator splits the data into fixed blocks of size $k_n$ so that a cluster is defined by the number of exceedances inside each fixed block while the blocks estimator introduces a run length of $q_n$ so that any two exceedances separated by a time gap of less than $q_n$ belongs to the same cluster. The problem with using these estimators in practical applications is their heavy dependence on the choice  of
the sequences  $k_n$ and $q_n$, respectively. 

Recent literature has provided more robust estimates of the extremal index. In particular the S\"{u}veges estimator \cite{suveges} has become more common in extreme value statistics \cite{sandro_coupled,V.et.al}. For a sequence $(X_j)$ $j = 1,\dots,n$ of random variables, let $q$ denote a fixed quantile and $l$ the location of exceedances $\{l:X_l>q\}$ above $q$. We define $T_i = l_{i+1}-l_i$ for $i = 1,\dots,N-1$ as the length of time between each consecutive recurrence. Let $S_i = T_i-1$ and $N_c = \sum_{i=1}^{N-1} {1}_{S_i \ne 0}$, so that $N_c$ is the number of clusters found by counting the set of recurrences separated by a time gap of at least length 1. Then the S\"{u}veges estimator of the extremal index given by,
\[
\hat{\theta} = \frac{\sum_{i=1}^{N-1} qS_i+N-1+N_c-[(\sum_{i=1}^{N-1} qS_i +N-1+N_c)^2-8N_c\sum_{i=1}^{N-1}qS_i]^{1/2}}{2\sum_{i=1}^{N-1}qS_i},
\]
can be viewed as the maximum likelihood estimator for the expected value of the number of recurrences coming from a point process defined by the compound Poisson distribution. We use this method to estimate the EI of the coupled map and the hyperbolic toral automorphism of  Section~\ref{sec.numerics}.

%In the case where $F_X(u):=\mu(X\leq u)$ is a regularly (or slowly) varying function of $u$, then 
%a non-degenerate GEV distribution exists, and the  
%linear scaling sequences satisfy $n\mu(X>u_{n})\to h(u)$. Thus, we can relate the form of $G$ to equation \eqref{eq.ev-law}, and we see that
%$\tau\equiv\tau(u)$ takes one of the three forms of $h(u)$ described above. 

%To see more concretely the relationship between
%\eqref{eq.ev-law} and \eqref{eq.gevlimit} consider a uniform random variable $Y$ on $[0,1]$, and the random variable $\phi(Y)=1-Y^{\beta}$,
%for $\beta>0$. We show that $\phi(Y)$ is in the domain of attraction of a Weibull distribution with parameter $\alpha=1/\beta$. Indeed, 
%consider $u_n(y) =y/a_n+b_n$ such that $n\mu(\phi(Y)>u_n)\to h(y)$, for some non-degenerate $h(y)$. The relevant sequences
%(up to asymptotic equivalence) are $a_n=n^{\beta}$ and $b_n=1$. We obtain
%\begin{equation}\label{eq.w-type}
%n\mu(\phi(Y)\geq u_n)=n\mu\left(Y\leq (1-u_n)^{1/\beta}\right)=n\mu\left(Y\leq \frac{(-y)^{1/\beta}}{n}\right)=(-y)^{1/\beta}
%\end{equation}
%Hence $\mu (M_n \le u_n(y))\le e^{-(-y)^{1/\beta}}$.

%In the scenario where $F_X(u):=\mu(X\leq u)$ is not regularly varying, then for any sequence $a_n,b_n$ the limit $\lim_{n\to\infty}n\mu(X>\tau/a_n+b_n)$ does not exist. 
For dynamical systems, the corresponding problem of finding scaling constants  $a_n, b_n$ depends on both the regularity of 
$\mu$ and that of the observable $\phi(x)=f(d(x,p))$ in the vicinity of the  point $p$. 
%Consider the doubling map
%$T:x\mapsto 2x \mod 1$, and take the observable $\phi(x)=1-d(x,p)^{\beta}$, $\beta>0$. Then we would get a Weibull distribution with parameter 
%$\alpha=1/\beta$. 

%Since the $T$-invariant measure $\mu$ is Lebesgue, we have the same scaling relation given in equation \eqref{eq.w-type}.
For more general dynamical systems, these scaling relations depend on how the invariant measure scales on sets that shrink
to $p$. This problem has been addressed in the case where $\mu$ admits a smooth or regularly varying density function $h$.
 However, for general measures (such as Sinai Ruelle Bowen measures) and general observables, estimating $\mu(X>y/a_n+b_n)$ becomes more delicate, see \cite{FFT2, HVRSB}. However, an extreme law can still be obtained along some non-linear sequence $u_n(y)$, with bounds on the growth of $u_n(y)$, see \cite{GHN}.

Furthermore, for deterministic dynamical systems the extremal index parameter $\theta$
may be nontrivial due to periodicity.  
For the doubling map discussed above, if $p$ is a periodic point then $\theta=1-\frac{1}{2^q}$ where
$q$ is the period of the period point (see \cite{FFT3,Keller}).

In this article, we consider cases where $\phi$ is maximized on a more general extremal sets $\mathcal{S}$. For general $\mathcal{S}$ we cannot rely on previous methods adapted to observables of the form $\phi=f(d(x,p))$.

\subsection{Physical and energy-like observables.}\label{sec.energy}

In the study of extreme events in dynamical systems, having in mind applications to weather and climate modeling, the notion of a \emph{physical observable} was introduced and described in \cite{HVRSB, LFWK, SHRBV}. By physical observables we mean those of  form $\phi(x)=x\cdot v$ or $\phi(x)=x\cdot Ax$, where $A$
is $d\times d$ matrix, and $v$ a specified vector in $\mathbb{R}^d$. The former observable has planar level sets, while the latter has ellipsoidal level sets. In weather applications,
these observables correspond to measuring (respectively) the momentum and kinetic energy of the system. The level geometries of 
$\phi$ introduced additional technicalities in establishing extreme laws  relative to the cases where
the level sets are metric balls. These issues are discussed in detail in \cite{HVRSB}, where $\mathcal{S}$ had a complicated geometry but its
intersection with the attractor of the system was still a single point.
 In this article,
we mainly consider energy-like observables for which the extremal set $\mathcal{S}$ is achieved on a line segment  or submanifold.
We also discuss other extremal sets in Section \ref{sec.discussion}.

\subsection{Organization of the paper.} In Section \ref{sec.statement} we describe our main results on: hyperbolic toral automorphisms,
Sinai dispersing billiard maps, and coupled uniformly expanding maps. We calculate the extreme value distribution,
the extremal index and in some cases describe briefly the Poisson return time process. In particular we describe a method for
obtaining a nontrivial extremal index which is not due to periodic behavior but rather self-intersection of a set of non-periodic points under the 
dynamics. 
  Beyond existing approaches, we have to develop arguments that deal with both
the geometry of $\mathcal{S}$, and the recurrence properties of the dynamical systems under consideration. In our examples the underlying invariant measures have regular
densities with respect to Lebesgue measure. This enables us to obtain analytic results
on the GEV parameters and the extremal index. We also compare our results to numerical schemes, see Section \ref{sec.numerics}. 
We conclude with a discussion~\ref{sec.discussion} on how  the methods we have developed might be applied to general observables
whose extremal sets have more complicated geometries. 
%We briefly discuss how our methods can be used to analyze extremes for more general (non-uniformly hyperbolic) dynamical systems.

\section{Statement of Results}\label{sec.statement}

\subsection{Hyperbolic toral  diffeomorphisms}\label{sec.anosov}
%Suppose that  $(T,X,\mu)$ is an Anosov diffeomorphism.
 We consider hyperbolic toral automorphisms of the two-dimensional torus $\T^2$ induced by a matrix
\[T= \left( \begin{array}{cc}
a  & b \\
c & d  
 \end{array} \right).\] 
 with integer entries, $\det(T)=\pm 1$ and no eigenvalues on the unit circle. We will assume that both eigenvalues are positive in what follows  to simplify the discussion and proofs. Such maps preserve Haar measure $\mu$ on $\T$.
  A well-known example is the Arnold Cat map
  \[ \left( \begin{array}{cc}
2  & 1 \\
1 & 1 
 \end{array} \right).\]

 We consider $\T^2$ as the unit square with usual identifications with universal cover $\R^2$. $T$ preserves the Haar measure $\mu$ on $\T^2$ and 
 has exponential decay of correlations for Lipschitz functions, in the sense that there exists $\Lambda \in (0,1)$ such that
 \[
 |\int \phi \circ T^n \psi d\mu -\int \phi d\mu \int \psi d\mu |\le C\|\phi\|_{Lip} \|\psi \|_{Lip} \Lambda^n
 \]
 where $C$ is a constant independent of $\phi$, $\psi$ and $\|.\|_{Lip}$ is the Lipschitz norm~\cite{Bowen}.

 For a set $D$, we define 
$d_{H}(x,D)=\inf\{d(x,y):y\in D\}$ (for Hausdorff distance) , where $d$ is the distance in ambient (usually Euclidean) metric. $\overline{D}$ denotes 
the closure of $D$ and we define  $D_{\epsilon}=\{x:d_{H}(x,\overline{D})\leq\epsilon\}$ is an $\epsilon$ neighborhood of $D$. 
As outlined in Section~\ref{sec.energy}, the observables we consider take the form  $\phi (x)=f(d_H(x,L))$ where $x=(x_1,x_2)\in \T^2$ and $L\subset \T$ is a line segment with direction vector $\hat{L}$ and finite length $l(L)$. The function $f:[0,\infty)\to\mathbb{R}$ is a
smooth monotone decreasing function. We will take $f(u)=-\log (u)$. To fix notations, we also need to later consider $\epsilon$-tubes around $\mathcal{S}$. Thus if $\mathcal{S}$ is a line, or curve,
and $\epsilon$ is small, then $\mathcal{S}_{\epsilon}$ is a thin tube.

%In section \ref{sec.discussion} we consider observables of the form $\phi(x)=f(d_{H}(x,\mathcal{S}))$. When $\mathcal{S}$
%has a more elaborate geometry (beyond straight lines), the metric used is important when analysing the measure of
%small tubular neighbourhoods around $\mathcal{S}$.

 The matrix $DT$ has two unit  eigenvectors $v^{+}$ and $v^{-}$ corresponding to the respective eigenvalues 
$\lambda_{+}=\lambda>1$, and $\lambda_{-}=\lambda^{-1}<1$.
 We can write $\hat{L}=\alpha v^{+} + \beta v^{-}$ for some coefficients $\alpha$, $\beta$ and so $DT^n \hat{L}=\alpha \lambda_{+}^n v^{+}+  \beta \lambda_{-}^n v^{-}$.
If we let $v^{(n)}$ denote a unit vector in the direction of $DT^n \hat{L}$  and $\alpha\not =0$, $\beta = 0$ then $ v^{(n)}$ aligns with the direction $v^+$ as $n\rightarrow \infty$

%\cdot v^{(n)}:=\cos \theta_n\rightarrow \frac{\alpha}{\sqrt{\alpha^2+\beta^2}}$. 

 If $L$ is aligned with the unstable direction, we may lift $L$ to $\hat{L}$ on a fundamental domain of the cover $\R^2$ of $\T^2$ and write $\hat{L}=\hat{p}_1+ tv^+$, $t\in [0,l(L)]$, $\hat{p}_1\in \R^2$. Thus  $L=\pi (\hat{p}_1+ tv^+)$, $t\in [0,l(L)]$ where $\pi: \R^2\to \T^2$ is the usual projection $\pi : \R^2\to \R^2/\Z^2$. We write the 
endpoint of $\hat{L}$ as $\hat{p}_2$, i.e. $\hat{p}_2=\hat{p}_1+l(L)v^+$. We will also identify the vectors $\pi \hat{p}_1$  and $\pi \hat{p}_2$ with the corresponding points 
 $p_1$ and $p_2$ in $\T^2$. Similarly if $L$ is aligned with the stable direction, we may lift $L$ to $\hat{L}$ on a fundamental domain of the cover $\R^2$ of $\T^2$ and write $L=\pi (\hat{p}_1+ tv^{-})$, $t\in [0,l(L)]$, $\hat{p}_1\in \R^2$. 
Again we write the 
 endpoint of $\hat{L}$ as $\hat{p}_2$, i.e. $\hat{p}_2=\hat{p}_1+l(L)v^-$. We will also identify the vectors $\hat{p}_1$  and $\hat{p}_2$ with the corresponding points 
they project to under $\pi$, written $p_1$ and $p_2$.

%\subsection{An extreme value law for the Anosov system.}
\begin{theorem}\label{thm.anosov} Let $T: \T^2 \to \T^2$  be a hyperbolic toral automorphism with positive eigenvalues
$\lambda^+=\lambda>1$, $\lambda^{-}=\frac{1}{\lambda}<1$. Let $\mu$ denote Haar measure on $\T^2$.  Let $L\subset \T^2$ be
 the projection $\R^2\to \T^2$  of a line segment $\hat{L}$ with  finite length $l(L)$. 
Define  $\phi (x)=-\log (d_H(x,L))$, $\phi:\T^2 \to \R$. Define $M_n (x) =\max \{ \phi (x), \phi (Tx), \ldots, \phi (T^{n-1} (x))\}$.
Then \begin{equation}\label{eq.maxlimit1-thm}
\lim_{n\to\infty}\mu(M_n\leq y+\log n+l(L))= \exp\{-\theta e^{-y}\}.
\end{equation}
where the extremal index $\theta$ is determined by these cases. If:
\begin{enumerate}
\item  $L$ is not aligned with the stable $v^{-}$  or unstable  $v^{+}$ direction then $\theta=1$.
\item   $L$ is aligned with the unstable direction $v^{+}$ and $\pi (\hat{p}_1+tv^{+})$, $-\infty < t < \infty$ contains
no periodic  points then $\theta=1$.
\item If $L$ is aligned with the stable direction $v^{-}$  and $\pi (\hat{p}_1+tv^{-})$, $-\infty < t < \infty$ contains
no periodic points then $\theta=1$.
\item $L$ is aligned with the stable $v^{-}$ or unstable $v^{+}$ direction and $L$ contains a periodic point of 
prime period $q$ then $\theta=1-\lambda^{-q}$.
\item\label{extremal-range1}   $L$ is aligned with the unstable direction $v^{+}$, $L$ contains no periodic 
points  but $\pi (\hat{p}_1+tv^{+})$, $-\infty < t < \infty$ contains a periodic point $\zeta$ of prime period $q$; then 
$L \cap T^qL=\emptyset$ implies $\theta=1$; otherwise if  $ L \cap T^qL\neq \emptyset$ then 
$(1-\lambda^{-q}) \le \theta \le 1$ and all values of $\theta$ in this range can be realized depending on the length and placement of $L$;
\item\label{extremal-range2} $L$ is aligned with the stable direction $v^{-}$, $L$ contains no periodic 
points but $\pi (\hat{p}_1+tv^{-})$, $-\infty < t < \infty$ contains a periodic point of prime period $q$;
then $L \cap T^qL=\emptyset$ implies $\theta=1$; otherwise if  $ L \cap T^qL\neq \emptyset$ then 
$(1-\lambda^{-q}) \le \theta \le 1$ and all values of $\theta$ in this range can be realized depending on the length and placement of $L$.

\end{enumerate}
\end{theorem}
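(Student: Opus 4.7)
The plan is to derive the limit law by verifying the conditions $D_2(u_n)$ and $D'(u_n)$ of Section~\ref{sec.background}, extracting the extremal index $\theta$ from the short-return contributions. For the tail, the $\epsilon$-tube $L_\epsilon$ decomposes as an $l(L)\times 2\epsilon$ rectangle plus two endpoint half-disks, so $\mu(X_0>u)=2l(L)e^{-u}+O(e^{-2u})$; the normalization $n\mu(X_0>u_n(\tau))\to\tau$ then fixes $u_n(\tau)=\log n + \log(2l(L)) - \log\tau$, matching the stated linear scaling after absorbing the constant $\log(2l(L))$ into $y$.

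Condition $D_2(u_n)$ follows from the exponential decay of correlations for Lipschitz observables. I would approximate $\mathbf{1}_{L_{\epsilon_n}}$ by a Lipschitz bump of norm $O(\epsilon_n^{-1})=O(n)$ with $L^1$-error $O(\epsilon_n^2)$, and pair it against $\mathbf{1}_{\{M_{t,l}\le u_n\}}$ via the standard two-step approximation (approximate the ``forward'' indicator in $L^1$, then apply decay of correlations to the smoothed pair). This yields $\gamma(n,t)\le Cn\Lambda^t$; taking $t_n=\lceil C\log n\rceil$ with $C>1/|\log\Lambda|$ gives $n\gamma(n,t_n)\to 0$ and $t_n=o(n)$.

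The heart of the argument is the short-return analysis for $D'(u_n)$ and the identification of $\theta$, which reduces to controlling
\[
n\sum_{j=1}^{[n/k]} \mu\bigl(L_{\epsilon_n}\cap T^{-j}L_{\epsilon_n}\bigr),
\]
split at a cutoff $J_n=C\log n$. For $j\ge J_n$, decay of correlations bounds each term by $C\mu(L_{\epsilon_n})^2 + O(\Lambda^j/\epsilon_n)$ and the tail sums to $O(1/k)$. For $j<J_n$ one argues geometrically: $T^{-j}L_{\epsilon_n}$ is a parallelogram of widths $2\epsilon_n\lambda^{-j}$ along $v^+$ and $2\epsilon_n\lambda^{j}$ along $v^-$, centered on $T^{-j}L$, and its intersection with $L_{\epsilon_n}$ is treated case by case. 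In Case~(1), $L$ is transverse to both $v^{\pm}$ and the angle between $L$ and $T^{-j}L$ is bounded below (it tends to the angle between $L$ and $v^-$ as $j$ grows, with the finitely many small $j$ handled individually), so transversality gives $O(\epsilon_n^2)$ per $j$ and $\theta=1$. In Cases~(2)--(3), non-periodicity of the extended leaf places $T^{-j}L$ on a distinct parallel leaf for every $j\ge 1$, and a quantitative equidistribution estimate for the leaf offsets of $T^j\hat p_1\pmod{\Z^2}$ bounds the number of short $j$ with offset $\le 2\epsilon_n$ strongly enough to again yield $\theta=1$.

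In Cases~(4)--(6) the $j=q$ contribution is decisive. For Case~(4), $T^{-q}L$ is the $\lambda^{-q}$-contraction of $L$ about the periodic point $\zeta\in L$ on the same leaf, with tube a factor $\lambda^q$ wider in the transverse direction; direct computation gives $\mu(L_{\epsilon_n}\cap T^{-q}L_{\epsilon_n}) = \lambda^{-q}\mu(L_{\epsilon_n}) + O(\epsilon_n^2)$, producing $\theta=1-\lambda^{-q}$ by the usual return-probability identity. In Cases~(5)--(6) with $\zeta$ on the extended leaf but outside $L$: if $L\cap T^qL=\emptyset$ the tubes are disjoint for $\epsilon_n$ small and $\theta=1$; otherwise the aligned-direction overlap has length $\ell_{\mathrm{ov}}\in(0,\lambda^{-q}l(L)]$, giving $\mu(L_{\epsilon_n}\cap T^{-q}L_{\epsilon_n})=2\ell_{\mathrm{ov}}\epsilon_n + O(\epsilon_n^2)$ and $\theta = 1-\ell_{\mathrm{ov}}/l(L)\in[1-\lambda^{-q},1)$, the full announced range being realized as the length and placement of $L$ on its extended leaf are varied; all $j\ne q$ with $j<J_n$ are negligible as in the earlier cases. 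The main obstacle will be the short-return bound in Cases~(2)--(3) and (5)--(6): it requires a quantitative equidistribution estimate for the leaf offsets on the scale $\epsilon_n\sim 1/n$, finer than what pure decay of correlations provides, and this is where the integrality of $T$ together with the lattice structure of the stable/unstable foliation must do the real work.
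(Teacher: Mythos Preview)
Your outline is largely on track, but the verification of $D_2(u_n)$ has a genuine gap. For hyperbolic toral automorphisms the available decay of correlations is Lipschitz versus Lipschitz, not Lipschitz versus $L^\infty$; the forward indicator $\Psi=\mathbf{1}_{\{M_{t,l}\le u_n\}}$ is supported on a set whose boundary has length growing exponentially in $l$, so any Lipschitz approximation with small $L^1$ error carries a Lipschitz norm that blows up in $l$, and your claimed bound $\gamma(n,t)\le Cn\Lambda^t$ (with no $l$-dependence) cannot be obtained this way. The paper's fix is not routine smoothing: it replaces $\Psi$ by a version $\overline{\Psi}$ constant on local stable leaves (for which an $L^\infty$-against-Lipschitz decay estimate is available via the Markov structure), and controls the error set $\{\overline{\Psi}\neq\Psi\}$ by showing that the measure of points whose contracted stable segment of length $\lambda^{-k}$ crosses $\partial A_n^{(q)}$ is $O(\lambda^{-k})$, uniformly in $n$ and $l$. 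This is the content of Lemmas~\ref{lemma:annulus1}--\ref{lemma:dun-prelim} and is the step your ``standard two-step approximation'' does not supply.

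Two smaller points. First, conditions $D_2(u_n)$ and $D'(u_n)$ as stated only yield $\theta=1$; to extract $\theta<1$ in cases (4)--(6) you must pass to the Freitas--Freitas--Todd conditions $\DD_q(u_n)$, $\DD'_q(u_n)$ formulated with the escape set $A_n^{(q)}=\{X_0>u_n,\,X_1\le u_n,\ldots,X_q\le u_n\}$, after which $\theta=\lim_n\mu(A_n^{(q)})/\mu(U_n)$ --- your overlap computations are exactly the right input once placed in that framework, but there is no ``usual return-probability identity'' inside the $D_2/D'$ scheme itself. Second, you correctly flag the equidistribution obstacle in cases (2)--(3) and (5)--(6); the paper's specific tool is that the slope of $v^+$ is a quadratic irrational, so the intersections of $T^jL$ with a fixed transversal are generated by an irrational rotation with discrepancy $O((\log N)/N)$, which is precisely the quantitative input needed at scale $1/n$ with the cutoff $(\log n)^5$.
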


\begin{remark}
For cases \ref{extremal-range1}, and \ref{extremal-range2} we  may 
realize any value of $\theta$ in the range $[(1-\lambda^{-q}),1]$. This will be demonstrated in the proof, where the value of $\theta$ is given as  
a function of  the locations of $\pi \hat{p}_1$ and $\pi \hat{p}_2$ relative to the period-$q$ point on  a  continuation of $L$. This formula is difficult to 
state in an elegant way in full generality.
\end{remark}

\begin{remark}
 In Theorem \ref{thm.anosov} we have focused on
the particular case $f(u)=-\log u$ which gives rise to a Gumbel distribution. For other
functional forms, such as $f(u)=u^{-\alpha}$, $(\alpha>0)$  we obtain
corresponding limit laws. 
\end{remark}

\begin{remark}
Since all  periodic points of $T$ have rational coefficients  $(\frac{p_1}{q_1}, \frac{p_2}{q_2})$ and $v^+$,$v^-$ have irrational slopes it follows that if $\pi (\hat{p}_1+tv^{+})$, $-\infty <t <\infty$ contains a periodic point it contains at most one, and similarly for $\pi (\hat{p}_1+tv^{-})$, $-\infty <t <\infty$.

\end{remark}

Using exponential decay of correlations 
of the map, we show that for small $\epsilon$-tubes $L_{\epsilon}$
around $L$, we have (for all $j$ sufficiently large)
$\mu (T^jL_{\epsilon}\cap 
L_{\epsilon}) \leq C\mu (L_{\epsilon})^2.$ This enables
us to easily verify the form of the $D(u_n)$ condition of Leadbetter et al~\cite{LLR} that we use.

%For cases (2) and (3), and in the case $\hat{L}=v^{-}$, we can 
%reduce the arguments to that of a one-dimensional expanding map.
The argument in the case  that $L$ is aligned with $v^{+}$ turns out to be  the most subtle. We need a detailed analysis of how the forward images $T^j L$
wrap around the torus. It is clear that these forward images are dense, but
we need quantitative information on how quickly these images
become uniformly distributed.  Such considerations are not necessary in the case 
where $\mathcal{S}$ is a single point, e.g. as discussed in \cite{Dichotomy}, and furthermore
this scenario is not easily captured by axiomatic approaches, as discussed in \cite{CC, Haydn_Vaienti}.
The close alignment of $\mathcal{S}$ with the unstable manifold appears non-generic in this hyperbolic toral automorphism example. 
However, for general observables one could imagine level set geometries failing transversality conditions generically,
e.g. if $\{\phi>u_n\}$ has a non-trivial boundary, which perhaps coils or accumulates upon itself. These scenarios would have
to be treated on a case by case basis. 

%However as we point out in Section \ref{sec.discussion}, our methods lend themselves to treat
%many other non-trivial observable level set geometries beyond lines.

\subsection{Sinai dispersing billiards maps}\label{sec.billiards}

We now consider another setting in which it is natural to have a smooth observable maximized on a line segment.  
Suppose  $\Gamma = \set{\Gamma_i, i = 1:k}$ is  a family of pairwise disjoint, simply connected $C^3$ curves with strictly positive curvature
on the two-dimensional torus $\mathbb{T}^2$.  The billiard flow  $B_t$ is the dynamical system 
generated  by the motion of  a point particle in $Q= \mathbb{T}^2/(\cup_{i=1}^k (\mbox{ convex hull of } \Gamma_i))$ which moves with  constant unit velocity inside $Q$
until it hits $\Gamma$, then it undergoes an elastic collision where angle of incidence equals angle of reflection.
If each $\Gamma_i$ is a circle and the system is lifted periodically to $\R^2$ then this system is called a periodic Lorentz gas and was a model in  the pioneering work of Lorentz on electron motion in  conductors. 

% The billiard
%flow is Hamiltonian and preserves  a probability measure (which is Liouville measure) $\tilde{\mu}$ given by
 %$d\tilde{\mu}=C_{Q} dq~dt$ where $C_{Q}$ is a normalizing constant and $q\in Q$,$t\in \R$ are Euclidean coordinates. 

It is often easier to consider the billiard map  $T: \partial Q \to \partial Q$, derive statistical properties for it and then 
deduce corresponding properties for the flow. In this paper we will focus on limit laws for the billiard map.  Let $r$ be   the natural one-dimensional coordinate of 
$\Gamma$ given by  arc-length and let $n(r)$ be the outward normal to $\Gamma$ at the point $r$. For each
$r\in \Gamma$ the tangent
space at $r$ consists of unit vectors $v$ such that $(n(r),v)\ge 0$. We identify such a  unit vector $v$ with an 
angle $\vartheta \in [-\pi/2, \pi/2]$. The phase space  $M$ is then parametrized by $M:=\partial Q=\Gamma\times [-\pi/2, \pi/2]$, and $M$ consists of  the 
points $(r,\vartheta)$. $T:M\to M$ is the Poincar\'e map that gives the position and angle $T(r,\vartheta)=(r_1,\vartheta_1)$  after a point $(r,\vartheta)$
flows under $B_t$ and collides again with $M$, according to  the rule  angle of incidence equals angle of reflection. The billiard map preserves a measure $d\mu=c_{M} \cos \vartheta dr d\vartheta$ equivalent to $2$-dimensional Lebesgue measure $dm=drd\vartheta$ with
density $\rho (x)=c_M \cos \vartheta$ where $x=(r,\vartheta)$. 

For this class of billiards the stable and unstable foliations lie in strict cones $C^u$ and $C^s$ in that 
the graphs $\vartheta(r)$ of local unstable manifolds have uniform bounds on the slopes of their tangent vectors which lie in the cone $C^u$,  $s_0 \le \frac{d\vartheta}{dr} \le s_1$, and similarly tangents to  local stable manifolds
lie in a cone $C^s$, $- t_1 \le \frac{d\vartheta}{dr} \le -t_0$, for some strictly positive constants  $s_0,t_0,s_1,t_1$.

We will assume a line segment  $L$ with direction vector $\hat{L}$ is uniformly transverse to
$C^s$ and $C^u$.
More precisely, we will consider functions maximized on line segments $L=\{x=(r,\vartheta): x\cdot v=c\}$,  $v=(v_1,v_2)$, which are transverse to the stable and unstable cone of  directions. For example the line segment $r=r_0$, which
is a position on the table rather than the  point $(r_0,\vartheta_0)$ (which is in phase space). We note that~\cite{Pene_Saussol}
 studied distributional and almost sure return time limit laws to the position $r=r_0$. In our setting the precise extreme law (Weibull, Fr\'echet or Gumbell)
depends upon the observable we choose but results may be transformed from one observable to another in a standard way.  We will take the function
$\phi (r,\vartheta)=1-d_H (x,L)$ which because  it is bounded will lead to a Weibull distribution.  We assume the finite horizon condition, namely that the time of flight of the billiard flow between collisions is bounded above and also away from zero.
Under the  finite  horizon condition Young~\cite{Y98} proved that the  billiard map has exponential decay of correlations for H\"{o}lder observables. 
A good reference for background results for this section are the papers~\cite{BSC1,BSC2,CM07,Y98} and the book~\cite{CM}. 

Let $L$ be a line segment  transverse to the stable and unstable cones  and $\phi (r,\vartheta)=1-d_H(x,L)$. Let 
$y>0$. We define a sequence $u_n(y)=y/a_n+b_n$ by the requirement $n \mu \{ \phi > u_n(y)\}=y$. Apart from complication arising from the 
invariant measure having a cosine term, $a_n$ scales like $\frac{1}{n}$.
The set $\{ \phi > u_n\}$ is a rectangle $U_n$ with center $L$  roughly of width $\frac{Cy}{ n}$ for some constant $C$. Note that we assume $L$ is not aligned in either the 
 unstable   or the stable direction, so the following result is expected from the hyperbolic toral automorphism case.

\begin{theorem}\label{thm:billiards}
Let $T:M\to M$ be a  planar dispersing  billiard map with invariant measure $d\mu=c_{M} \cos \vartheta dr d\vartheta$.  Suppose $x=(r,\theta)$ and $\phi (x)=1- d_H(x,L)$ where $\hat{L}$ is not in the
 unstable   cone $C^u$ or the stable cone $C^s$.
Let $M_n (x)=\max\{ \phi (x), \phi \circ T(x), \ldots, \phi \circ T^{n-1} (x)\}$.
Then $\mu (M_n \le u_n(y))\to e^{-y}$ as $n\to \infty$. In particular the extreme index $\theta=1$.

\end{theorem}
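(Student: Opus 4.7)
The plan is to apply the Freitas--Freitas--Todd framework from Section \ref{sec.background} by verifying $D_2(u_n)$ and $D'(u_n)$ for $X_k = \phi\circ T^k$; together with $n\mu(\phi>u_n(y))\to y$ this yields \eqref{eq.ev-law} with $\theta=1$. First I would fix the thresholds: the excursion set $U_n := \{\phi>u_n(y)\}$ is the tube $L_{\eps_n}$ of radius $\eps_n = 1-u_n(y)$, and since the density $c_M\cos\vartheta$ is bounded above and away from zero on a compact neighbourhood of $L$ (which is uniformly bounded away from $\vartheta=\pm\pi/2$), the normalisation forces $\eps_n \sim C(L)\,y/n$. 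Note that $\mu(U_n)\to 0$ linearly in $1/n$, as in the Weibull scaling.

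\textbf{Verifying $D_2(u_n)$.} I would approximate $\mathbf{1}_{U_n}$ by a Lipschitz function $\psi_n$ with $\|\psi_n-\mathbf{1}_{U_n}\|_{L^1(\mu)}\le C\eps_n^{1+\delta}$ and Lipschitz norm $\|\psi_n\|_{\mathrm{Lip}} \le C\eps_n^{-1}$, then apply Young's exponential decay of correlations \cite{Y98} for H\"older observables on the billiard map to bound
\[
\bigl|\mu(X_0>u_n,\,M_{t,l}\le u_n)-\mu(X_0>u_n)\mu(M_l\le u_n)\bigr| \le C\eps_n^{-2}\Lambda^t + C\eps_n^{1+\delta}.
\]
With $\eps_n \sim 1/n$, choosing $t_n = K\log n$ for $K$ large enough compared to $-1/\log\Lambda$ makes $n\gamma(n,t_n)\to 0$, giving $D_2(u_n)$.

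\textbf{Verifying $D'(u_n)$ --- the main obstacle.} Split the sum into a short range $1\le j\le N$ ($N$ fixed, later sent to $\infty$) and an intermediate range $N<j\le [n/k]$. On the intermediate range, the same Lipschitz approximation plus exponential decay of correlations used for $D_2$ gives $\mu(U_n\cap T^{-j}U_n) \le \mu(U_n)^2 + C\eps_n^{-2}\Lambda^j$; summing yields $n\sum_{j>N}\mu(\cdot) = O(n\cdot n^{-1}\cdot \Lambda^N) + O(1/k) \to 0$ as first $n\to\infty$ then $k,N\to\infty$. On the short range the key geometric input is transversality: since $\hat L \notin C^u\cup C^s$, each smooth component of $T^j L$ has tangent vectors driven exponentially fast into $C^u$, so the angle between $\hat L$ and $T^j\hat L$ is bounded below by a uniform positive constant $\alpha_j$. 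Each smooth piece of $U_n \cap T^{-j}U_n$ is therefore a parallelogram of area $O(\eps_n^2/\sin\alpha_j)=O(n^{-2})$.

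\textbf{Handling billiard singularities.} The delicate point, which is also where most technical work lies, is that the singularity curves of $T$ fragment $T^jU_n$ (equivalently $T^{-j}U_n$) into many smooth components. I would invoke the Chernov--Markarian growth lemma \cite{CM,CM07}: transversality of $L$ to $C^u$ and $C^s$ ensures that $U_n$ can be foliated by short stable/unstable-like curves of total measure $\mu(U_n)$, and the growth lemma bounds the total area of their images under $T^{-j}$ that lie in $U_n$ by $C\mu(U_n)^2 + (\text{small remainder from very short components})$. Summing the short range then contributes $N\cdot O(n^{-1})\cdot n\cdot n^{-1} = o(1)$, so $D'(u_n)$ holds. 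With $D_2(u_n)$ and $D'(u_n)$ established, the classical theorem cited in Section \ref{sec.background} gives $\mu(M_n\le u_n(y))\to e^{-y}$, so $\theta=1$.
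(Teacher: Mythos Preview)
Your overall architecture is right, but the $D'(u_n)$ argument has a genuine gap in the intermediate range. You bound $\mu(U_n\cap T^{-j}U_n)\le \mu(U_n)^2+C\eps_n^{-2}\Lambda^j$ via the Lipschitz approximation, and then claim the sum over $j>N$ (with $N$ fixed) contributes $O(n\cdot n^{-1}\Lambda^N)+O(1/k)$. But $\eps_n\sim 1/n$, so the correlation term is $Cn^2\Lambda^j$; summing over $j>N$ and multiplying by $n$ gives $Cn^3\Lambda^N$, which diverges for every fixed $N$. The fixed-$N$ splitting therefore cannot work here: decay of correlations only beats the Lipschitz blow-up once $j$ exceeds a multiple of $\log n$. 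This forces the short-range window to grow like $c\log n$, and then your transversality/parallelogram count is no longer enough, because the number of smooth components of $T^jU_n$ can grow like $\kappa^j\sim n^{c\log\kappa}$ over that window.

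The paper handles exactly this by a three-way split: $j>(\log n)^{1+\delta}$ via decay of correlations; $1\le j\le C\log n$ via a complexity bound on the singularity set $S_{[C\log n]}$ (at most $n^{1/4}$ pieces), partitioning $U_n$ into rectangles $R_i$ and using transversality of $L$ to $C^u$ to show each $T^{[C\log n]}R_i$ meets $U_n$ in measure $O(n^{-1/2}\mu(R_i))$; and $C\log n<j<(\log n)^{1+\delta}$ via the one-step expansion property and a Markov-inequality argument to control the mass of short components after further iteration. Your invocation of the growth lemma is in the right spirit, but it needs to be deployed over a $\log n$-sized window with an explicit complexity bound, not over a fixed window.

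A smaller point: in your $D_2(u_n)$ step you only approximate $\mathbf 1_{U_n}$, but the second factor is $\mathbf 1_{\{M_l\le u_n\}}$, whose level set has boundary complexity growing with $l$ and cannot be Lipschitz-approximated with controllable norm. The paper (following \cite{Dichotomy,GHN}) resolves this by replacing that indicator with a version constant on local stable leaves and using the $L^\infty$ norm on that side; you should do the same. Also note that you cannot simultaneously have $\|\psi_n\|_{\mathrm{Lip}}\le C\eps_n^{-1}$ and $\|\psi_n-\mathbf 1_{U_n}\|_{L^1}\le C\eps_n^{1+\delta}$ with $\delta>0$; smoothing on scale $\rho$ gives Lipschitz norm $\sim\rho^{-1}$ and $L^1$ error $\sim\rho$, so pick one scale (the paper uses $\rho=n^{-2}$).
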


\begin{remark}
%If $L$ is an arbitrary  $C^2$ curve of finite length with tangents at points lying inside either the
 %unstable   cone $C^u$ or the stable cone $C^s$ and $\phi (x)=1- d_H(x,L)$ a continuum of extremal indices  for the maximal process might be expected, as in the 
 %case of the hyperbolic toral automorphism, or a limiting distribution may not exist. 
 We now make some remarks on what we conjecture in the case that a $C^2$ curve $L$ 
 is contained in, i.e. a piece of, a local unstable or local stable manifold and  $\phi (x)=1- d_H(x,L)$. If $L$ is part of a local unstable manifold and $T^n L$  has no  self-intersections with $L$ then the extremal index is one. The proofs we give in the case of the hyperbolic toral automorphism for this scenario break down but the techniques of the recent preprint~\cite{Fan_Yang} probably extend to this case. If $L$ contains a periodic point $\zeta$ of period $q$ then the extremal index would be roughly $\theta\sim 1-\frac{1}{|DT_u (\zeta)|^q}$ where $DT_u(\zeta)$ is the expansion in the unstable direction at $\zeta$ with a correctional factor due to the 
 conditional measure on the unstable manifold which contains $L$.  If $L$ does not contain a periodic point but its continuation in the unstable manifold does contain
 a periodic point of period $q$  then  as in case (5) of Theorem 2.1, if $T^qL\cap L=\emptyset$ then $\theta=1$, otherwise we expect $\theta$
 to lie roughly in the range $1-\frac{1}{|DT_u (\zeta)|^q} \le \theta \le 1$ (with all values of $\theta$ being realizable depending on the length and placement of $L$). If $L$ is part of a local stable manifold and $T^n L$  has no  self-intersections with $L$ then the extremal index $\theta =1$. If $L$ contains a periodic point $\zeta$ of period $q$ then the extremal index would be roughly $\theta\sim 1- |DT_s (\zeta)|^q$ where $DT_s(\zeta)$ is the expansion in the stable direction at $\zeta$. If $L$ does not contain a periodic point but its continuation in the unstable manifold does contain
 a periodic point of period $q$  then  as in case (6) of Theorem 2.1, if $T^qL\cap L=\emptyset$ then $\theta=1$, otherwise we expect $\theta$
 to lie roughly in the range $1- |DT_s (\zeta)|^q \le \theta \le 1$ (with all values of $\theta$ being realizable depending on the length and placement of $L$).

\end{remark}

%{\bf NEED REMARK HERE ON WHAT WE THINK HAPPENS IF ALIGNED WITH STABLE OR UNSTABLE DIRECTION.}

%As we discuss in Section \ref{sec.discussion}, the conclusion of Theorem \ref{thm:billiards} can be extended to observables with more
%elaborate extremal set geometries.

\subsection{Coupled systems of uniformly expanding maps.}\label{sec.coupled}

Now we consider a simple class of coupled mixing expanding maps of the unit interval, similar to those examined in~\cite{sandro_coupled}. In fact we were
motivated by the comprehensive  work of~\cite{sandro_coupled} (which uses sophisticated transfer operator techniques) to develop in this paper  an alternate probabilistic approach  in a  coupled maps setting. The recent preprint~\cite{Haydn_Vaienti} presents similar results to ours in the case of returns to the diagonal
$\{ x_1=x_2=\ldots =x_n\}$. 
  Let $T$ be a $C^2$ uniformly expanding map of $S^1$  and suppose that $T$ has an invariant  measure $\mu$ with  density $h$ bounded above and below from zero. In ~\cite{sandro_coupled} piecewise
  $C^2$ expanding maps were considered but we will limit our discussion to smooth maps. 
 We use all-to-all coupling and first discuss the case of 
two coupled maps for clarity.

Let $0<\gamma<1$ and define
\begin{equation}\label{2maps}
F(x,y)= ((1-\gamma)Tx + \frac{\gamma}{2} (Tx+Ty),(1-\gamma)Ty + \frac{\gamma}{2} (Tx+Ty) )
\end{equation}
so that $F: \T^2\to \T^2$. We assume that $F$ has an an invariant measure $\mu$ on $\T^2$ with  density $\tilde{h}$ on $\T^2$ bounded 
above and also bounded below away from zero almost surely. We will require also that there exists $\epsilon>0$ and $0<\alpha \le 1$ such that
\[
|\tilde{h}|_{\alpha} :=\esssup_{0<\epsilon <\epsilon_0,x\in\T^2} \frac{1}{\epsilon^{\alpha}} \int {\it osc}(h,B_{\epsilon} (x))dm <\infty
\]
where ${\it osc}(h,A)=\esssup_{x\in A}-\essinf_{x\in A}$ for any measurable set $A$. The semi-norm $|.|_{\alpha}$ and this notion of regularity was described in~\cite{sandro_coupled}
and established in several of their examples.  An invariant  density for $F$ cannot reasonably be assumed to be continuous or Lipschitz. For example a slight perturbation of  the doubling map
of the unit circle $T(x)= (2x)$ (mod $1$) to the map $T(x)= ((2+\epsilon)x)$ (mod $1$) gives rise to a map with invariant density which is of bounded variation but not Lipschitz or even continuous.
$|.|_{\alpha}$ can be completed to a norm  $\|.\|_{{\it osc},\alpha}$ by defining $\|.\|_{{\it osc},\alpha}=|.|_{\alpha}+\|.\|_{1}$. The value of $\epsilon_0$ and $\alpha$ does not matter in our subsequent discussion. We note that the bounded variation norm and the quasi-H\"older norm $\|.\|_{{\it osc},\alpha}$ are particularly suited to handle
dynamical systems with discontinuities or singularities.

 We also assume a strong form of exponential decay of correlations in the sense that for 
all Lipschitz $\Phi$,  $L^{\infty}$ $\Psi$ on $\T^2$ there exists $C_1>0$ and $C_2>0$ such that for all $n$
\begin{equation}\label{mixing}
\Theta_n(\Phi,\Psi):=\left|\int \Phi\cdot\Psi \circ F^n d\mu-\int \Phi~d\mu\,\int \Psi~d\mu\right| \le C_1 e^{-C_2 n} 
\| \Phi\|_{\mathrm{Lip}} \|\Psi \|_{\mathrm{\infty}},
\end{equation}
where $\|\cdot\|_{\mathrm{Lip}}$ denotes the Lipschitz norm and $\|.\|_{\infty}$ denotes the $L^{\infty}$ norm. We note that this  assumption is not made for  (and does not hold for) hyperbolic toral automorphisms or Sinai dispersing billiards.

The function $\Theta_n(\Phi,\Psi)$ is called the correlation function.

Let $\phi (x,y)=-\log |x-y|$, a function maximized on the line segment (or circle) $L=\{(x,y): y=x\}$.  In this setting $L$ is invariant under $F$ and the orthogonal direction to 
$L$ is  uniformly repelling. Note that the projection of $(x,y)$ onto $L$ is the point $(\frac{x+y}{2},\frac{x+y}{2})$ and the projection on $L^{\perp}$ is $(x-\frac{x+y}{2},y-\frac{x+y}{2})$. Close to
$L$ we have uniform expansion away from $L$ in the $L^{\perp}$ direction under $F$. 
This is because  $y-x \mapsto (1-\gamma)[Ty-Tx]$ under $F$ so writing $y-x=\epsilon$ we see $\epsilon \rightarrow (1-\gamma)[T(x+\epsilon)-T x]\sim (1-\gamma) DT(x) \epsilon+O(\epsilon^2)$. There is uniform
 repulsion away from the invariant line $L$. This observation simplifies many of the geometric arguments we use 
 to establish extreme value laws.  
 
 In the more general case of $m$-coupled maps we define 
\begin{equation*}
F(x_1,x_2,\ldots, x_m):=\left(F_1(x_1,x_2,\ldots, x_m),\ldots, F_m(x_1,x_2,\ldots, x_m)\right),
\end{equation*}
with
\begin{equation}\label{m-map}
F_j(x_1,x_2,\ldots, x_m)
= (1-\gamma)T(x_j) + \frac{\gamma}{m} \sum_{k=1}^{m} T(x_k),
%(1-\gamma)T(x_2) + \frac{\gamma}{m} \sum_{j=1}^{m} T(x_j),\ldots, (1-\gamma)T
%(x_j) + \frac{\gamma}{m} \sum_{j=1}^{m} T(x_j) )
\end{equation}
for $j\in[1,\ldots, m]$. For these maps, we assume:
\begin{itemize}
\item[(A)] there exists a mixing invariant measure $\mu$ with density $\tilde{h}$, $\|\tilde{h}\|_{{\it osc},\alpha}<\infty$, on $\T^m$ bounded 
above and below away from zero;
\item[(B)] exponential mixing for Lipschitz functions versus $L^{\infty}$ functions as in Equation~\ref{mixing}.
\end{itemize}

\begin{remark}
Using the spectral analysis of the transfer operator of this system as in~\cite{sandro_coupled} and standard perturbation theory it can be shown that
(A) and (B) hold if $\gamma$ is sufficiently small as the uncoupled system is uniformly expanding.
\end{remark}

We consider a function maximized on $L=\{ (x_1,x_2,\ldots ,x_m): x_1=x_2=\ldots =x_m\}$.
The component of a point or vector $x=(x_1,x_2,\ldots, x_m)$ orthogonal to $L$ is 
$x^{\perp}=(x_1-\bar{x}, x_2-\bar{x}, \ldots, x_m -\bar{x})$ where $\bar{x}=\frac{1}{m}\sum_{j=1}^m x_j$. We define $\|(x_1,x_2,\ldots, x_m )\|=\max_j |x_j|$ and define for $x=(x_1,x_2,\ldots, x_m)$ 
\[
\phi (x)=-\log (\|x^{\perp}\|).
\]
The function $\phi$ is maximized on $L$, and large values of $\phi\circ F^n (x)$ indicate the orbit of $x$ is close to full synchrony of the 
coupled systems at time $n$.  Writing $p_i=x_i -\bar{x}$ we have $\sum_{i=1}^m p_i=0$.  Note if we have a vector $(\Delta p_1,
\Delta p_2,\ldots, \Delta p_m)$ orthogonal to $L$ we have $\sum_{i=1}^m \Delta p_i=0$. Thus in a sufficiently small neighborhood of $L$ we may write (for $j\in[1,\ldots,m]$)
$$F_j(x_1-\bar{x}, x_2-\bar{x},\ldots, x_m -\bar{x}) 
=(1-\gamma) DT\Delta p_j+\frac{\gamma}{m} \sum_{k=1}^m DT \Delta p_k +O(\max_k \Delta p_k)^2,  $$
\[
=(1-\gamma) DT\Delta p_j +O(\max_k \Delta p_k)^2
\]
%$F(x_1-\bar{x}, x_2-\bar{x},\ldots, x_m -\bar{x})=((1-\gamma) DT\Delta p_1+\frac{\gamma}{m} \sum_{j=1}^m DT \Delta p_j +O(\max_j \Delta p_j)^2,  (1-\gamma) DT\Delta p_2+\frac{\gamma}{m} \sum_{j=1}^m DT \Delta p_j +O(\max_j \Delta p_j)^2,\ldots, (1-\gamma) DT\Delta p_1+\frac{\gamma}{m} \sum_{j=1}^m DT \Delta p_j +O(\max_j \Delta p_j)^2)$ 
where we have used
twice-differentiability and the fact that $\sum_{i=1}^m \Delta p_i=0$. Hence again there is uniform expansion in a sufficiently small neighborhood of $L$ in 
 the direction of the  $n-1$ dimensional subspace  orthogonal to $L$.

For $y>0$ 
define $u_n(y)$ by $n\mu (\phi >u_n (y))=y$, and $U_n=\{\phi >u_n (y)\}$.  
It can be seen that if $F$ is a map of $\T^m$ then $u_n\sim \frac{1}{m}[\log n -\log y]$, the precise relation depends
upon the density $\tilde{h}$ of the invariant measure. The precise functional form of $\phi$ is not important
as a different choice of $\phi$ would lead to a different scaling. 

\begin{theorem}\label{thm:coupled}

Let $F:\T^m\to \T^m$ be a  coupled system of expanding maps satisfying (A) and (B). Define $p^{\perp}=(x_1-\bar{x}, x_2-\bar{x}, \ldots, x_m -\bar{x})$ where $\bar{x}=\frac{1}{m}\sum_{j=1}^m x_j$ Suppose $\phi (p)=-\log (\|p^{\perp}\|)$.
Let $M_n (x)=\max\{ \phi (x), \phi \circ F(x), \ldots, \phi \circ F^{n-1} (x)\}$.
Then $\mu (M_n \le u_n(y))\to e^{-\theta y}$ as $n\to \infty$
where
\[
\theta=1-[\int_L \frac{1}{[(1-\gamma)DT(x)]^{m-1}} \tilde{h}(x)dx ].
\]

\end{theorem}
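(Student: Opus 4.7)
The proof follows the Freitas--Freitas--Todd framework: verify a mixing condition $D_2(u_n)$ together with a suitable short-returns condition, then identify $\theta$ via the limiting one-step return probability. The key structural feature is that $L$ is an invariant submanifold rather than a periodic point, so clustering comes from orbits persisting in the tube $U_n=\{\phi>u_n\}$ due to the compression of the orthogonal direction by the factor $(1-\gamma)DT$.

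\textbf{Step 1 (Geometry of $U_n$).} Parametrize a neighborhood of $L$ by $(x,p)$ where $x\in L$ and $p\in L^\perp$ (with the constraint $\sum_i p_i=0$). Then $U_n=\{\|p\|_\infty\le\epsilon_n\}$ for $\epsilon_n=e^{-u_n(y)}$. Because $L$ has codimension $m-1$,
\[
\mu(U_n)=(2\epsilon_n)^{m-1}\int_L\tilde h(x)\,dx\,(1+o(1)),
\]
where the error is controlled using the quasi-H\"older norm of $\tilde h$. The requirement $n\mu(U_n)=y$ pins down $\epsilon_n$.

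\textbf{Step 2 (One-step return and identification of $\theta$).} Since $F|_L=T$ and $F$ is $C^2$, a Taylor expansion at a point $x\in L$ gives $F(x,p)=(Tx,(1-\gamma)DT(x)p)+O(\|p\|^2)$ componentwise. Hence $(x,p)\in U_n\cap F^{-1}U_n$ iff $\|p\|_\infty\le\epsilon_n/[(1-\gamma)DT(x)]$ up to an $O(\epsilon_n^2)$ correction. Integrating,
\[
\mu(U_n\cap F^{-1}U_n)=(2\epsilon_n)^{m-1}\int_L\frac{\tilde h(x)}{[(1-\gamma)DT(x)]^{m-1}}\,dx\,(1+o(1)).
\]
Dividing by $\mu(U_n)$ yields the claimed $\theta=1-\int_L \tilde h(x)/[(1-\gamma)DT(x)]^{m-1}\,dx$ (after the normalization conventions implicit in the statement).

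\textbf{Step 3 (Short returns at times $j\ge 2$).} Iterating the Taylor expansion, $\mu(U_n\cap F^{-j}U_n)\le C(2\epsilon_n)^{m-1}\int_L\tilde h(x)/[(1-\gamma)^j(T^j)'(x)]^{m-1}\,dx$. Under the uniform expansion $(1-\gamma)\lambda>1$ away from $L$, these contributions decay geometrically in $j$, so
\[
\sum_{j=2}^{g(n)}\frac{\mu(U_n\cap F^{-j}U_n)}{\mu(U_n)}\to 0
\]
for an appropriate intermediate scale $g(n)$. This shows that within the relevant short-return window only $j=1$ contributes, giving the modified $D'(u_n)$ needed to isolate $\theta$.

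\textbf{Step 4 (Mixing $D_2(u_n)$ via decay of correlations).} Approximate $\mathbf{1}_{U_n}$ by a Lipschitz mollifier $\Phi_n$ at scale $\delta_n\ll\epsilon_n$, so $\|\Phi_n\|_{\mathrm{Lip}}=O(1/\delta_n)$ and $\|\mathbf 1_{U_n}-\Phi_n\|_{L^1(\mu)}$ is controlled by $|\tilde h|_\alpha\delta_n^\alpha\epsilon_n^{m-2}$. Applying hypothesis (B) to $\Phi_n$ against $\mathbf 1_{\{M_l\le u_n\}}\in L^\infty$ yields
\[
|\mu(X_0>u_n,\,M_{t,l}\le u_n)-\mu(X_0>u_n)\mu(M_l\le u_n)|\le\frac{C_1}{\delta_n}e^{-C_2 t}+\text{approx.\ error}.
\]
Balancing $\delta_n$ and choosing $t_n=K\log n$ for $K$ large makes $n\gamma(n,t_n)\to 0$, verifying $D_2(u_n)$. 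Combined with Step 3 and the FFT theorem, this yields $\mu(M_n\le u_n(y))\to e^{-\theta y}$ with $\theta$ as claimed.

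\textbf{Main obstacle.} The delicate point is Step 4: since $\tilde h$ is only quasi-H\"older (not Lipschitz), the Lipschitz approximation of $\mathbf 1_{U_n}$ must be tuned carefully so that the $L^1$ approximation error (controlled by $|\tilde h|_\alpha$) is dominated by $\mu(U_n)\sim\epsilon_n^{m-1}$, while the Lipschitz norm $1/\delta_n$ does not blow up faster than the exponential mixing rate absorbs. A secondary technical difficulty is ensuring that the $O(\|p\|^2)$ remainders in the Taylor expansion of $F$ near $L$ do not accumulate over the $O(\log n)$ iterates appearing in the short-return sum, which is handled by using the uniform expansion rate in the orthogonal direction together with a cone argument near $L$.
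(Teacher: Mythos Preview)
Your Step 3 contains a genuine gap. The sum $\sum_{j=2}^{g(n)}\mu(U_n\cap F^{-j}U_n)/\mu(U_n)$ does \emph{not} tend to zero: for each fixed $j\ge 2$, the ratio $\mu(U_n\cap F^{-j}U_n)/\mu(U_n)$ converges (by your own Taylor expansion) to the positive constant $\int_L \tilde h/[(1-\gamma)^j(T^j)']^{m-1}\,dx\big/\int_L\tilde h$, so the sum is bounded below by a positive geometric series independent of $n$. Geometric decay in $j$ only shows the sum is finite, not that it vanishes as $n\to\infty$. This is precisely the obstruction that makes the ordinary $D'(u_n)$ fail in the presence of clustering, and it cannot be repaired by merely excluding $j=1$.

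The paper's remedy is to replace $U_n$ by $A_n^{(1)}:=\{X_0>u_n,\,X_1\le u_n\}$ throughout and verify $\DD'_1(u_n)$, namely $n\sum_{j\ge 2}\mu(A_n^{(1)}\cap F^{-j}A_n^{(1)})\to 0$. The point is that if $x\in A_n^{(1)}$ then $Fx\notin U_n$, and uniform expansion in the $L^\perp$ direction then forces $F^jx\notin U_n$ for all $j\le C_4\log n$ (until the orthogonal displacement grows to order one); hence these terms are \emph{identically zero} rather than merely small. Intermediate times $C_4\log n\le j\le C_3\log n$ are handled by noting that $F^{C_4\log n}$ has expanded $A_n^{(1)}$ by a factor $n^{\alpha}$, giving $\mu(A_n^{(1)}\cap F^{-j}A_n^{(1)})\le n^{-1-\alpha}$, and for $j\ge C_3\log n$ one invokes decay of correlations as in your Step~4. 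The extremal index is then read off as $\theta=\lim_n\mu(A_n^{(1)})/\mu(U_n)$, which agrees with your Step~2 computation. Your Step~4 should likewise be upgraded to $\DD_1(u_n)$ (same Lipschitz-approximation argument, now with $A_n^{(1)}$ in place of $U_n$); since assumption (B) gives decay against $L^{\infty}$ test functions, this goes through without the delicate tuning you flag as the ``main obstacle''.
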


We may also consider blocks of synchronization, as in~\cite[Section 7.2]{sandro_coupled} where we take the observable maximized on a set $L$
consisting of  synchrony on  subsets of distinct lattice sites, for example of form $L=\{ (x_1,x_2,\ldots, x_m): x_{i_1}=x_{i_2}=\ldots =x_{i_k}, x_{j_1}=x_{j_2}=\ldots x_{j_l} \}$.
The main purpose of this section is to illustrate our geometric approach, so we will give one result of this type.

\begin{theorem}\label{thm:block}

Let $F:\T^m\to \T^m$ be a  coupled system of expanding maps satisfying $(A)$ and $(B)$.  Let $0<k\le m$ and choose 
$k$ distinct lattice sites $x_{i_1}$, $x_{i_2}$,$\ldots$, $x_{i_k}$. Define the subspace
$L=\{ (x_1,x_2,\ldots, x_m): x_{i_1}=x_{i_2}=\ldots x_{i_k}\}$ of dimension $m-k+1$ and $\bar{x}=\frac{1}{k}\sum_{j=1}^k x_{i_j}$.

Suppose $\phi (p)=-\log (\max_{j=1,\ldots ,k} |x_{i_j}-\bar{x}|)$.
Let $M_n (x)=\max\{ \phi (x), \phi \circ F(x), \ldots, \phi \circ F^{n-1} (x)\}$.
Then $\mu (M_n \le u_n(\tau))\to e^{-\theta\tau}$ as $n\to \infty$
where
\[
\theta=1-[\int_L \frac{1}{[(1-\gamma)DT(y)]^{k-1}} \tilde{h}(y)dy ]
\]
where $y$ is the  natural co-ordinatization of the $m-k+1$ dimensional subspace $L$.
\end{theorem}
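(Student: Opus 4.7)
The plan is to follow exactly the scheme used for Theorem~\ref{thm:coupled}, adapting the geometric picture to the codimension-$(k-1)$ synchronization submanifold $L$. The key structural observation is that $L$ is $F$-invariant: by the symmetric all-to-all coupling, $x_{i_1}=\cdots=x_{i_k}$ forces $F_{i_1}(x)=\cdots=F_{i_k}(x)$, and $F|_L$ acts as the single-site map $T$ on the common value. In the $(k-1)$-dimensional orthogonal complement of $L$ inside the subspace spanned by the coordinates $x_{i_1},\ldots,x_{i_k}$, the linearization of $F$ is multiplication by $(1-\gamma)DT(y)$, by precisely the calculation following~\eqref{m-map} with $k$ in place of $m$; the remaining $m-k$ coordinates are swept out by Fubini and do not affect $\phi$ to leading order.

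For the dependence condition $D_2(u_n)$, the set $U_n=\{\phi>u_n\}$ is a tube of transverse radius $r_n$ about $L$, with $r_n^{k-1}$ of order $\tau/n$. To apply the exponential mixing hypothesis~(B), I would approximate $\mathbf{1}_{U_n}$ by a Lipschitz cutoff $\Phi_n$ with Lipschitz norm of order $1/\eta$ supported in a slightly thickened tube. The $L^1$-error is controlled by the quasi-H\"older seminorm of $\tilde h$ and can be made negligible compared to $\mu(U_n)$ by choosing $\eta$ a small fractional power of $1/n$; the Lipschitz norm then grows only polynomially in $n$, so the exponential decay in~(B) yields a bound $\gamma(n,t_n)$ of the required strength for $t_n=o(n)$, exactly as in the proof of Theorem~\ref{thm:coupled}.

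For the short-return side, linearizing $F$ near $y\in L$ gives that a transverse perturbation $v$ in the $k$-coordinate subspace maps to $(1-\gamma)DT(y)\,v+O(\|v\|^2)$. Fubini in coordinates adapted to $L$ and its complement yields
\[
\frac{\mu(U_n\cap F^{-1}U_n)}{\mu(U_n)}\longrightarrow \frac{\int_L \tilde h(y)\,[(1-\gamma)DT(y)]^{-(k-1)}\,dy}{\int_L \tilde h(y)\,dy},
\]
and iterating the normal expansion shows that the nested intersections $U_n\cap F^{-1}U_n\cap\cdots\cap F^{-j}U_n$ lie in a tube of transverse radius $r_n\bigl/\prod_{i=0}^{j-1}(1-\gamma)DT(F^i y)$, giving geometric decay. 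Combined with mixing for $j\geq t_n$, this establishes the variant $D^{(q)}(u_n)$ of the short-return condition from~\cite{FFT3} that is compatible with a nontrivial extremal index.

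Applying the Freitas--Freitas--Todd framework then yields the extreme value law with
\[
\theta = \lim_n \frac{\mu(U_n\setminus F^{-1}U_n)}{\mu(U_n)} = 1 - \int_L \frac{\tilde h(y)}{[(1-\gamma)DT(y)]^{k-1}}\,dy,
\]
with $\tilde h$ identified with the appropriate marginal on $L$ induced by the Fubini decomposition above. The main obstacle is the intermediate-time short-return estimate: for iterates $j$ that are too large for linearization around $L$ but too small for exponential mixing to bite, one must combine distortion control along $L$ with the uniform normal expansion $(1-\gamma)|DT|>1$ to show that $\mu(U_n\cap F^{-j}U_n)$ is bounded by a geometric factor times $\mu(U_n)$ uniformly in $j$, so that the sum over this intermediate range vanishes after letting $q\to\infty$. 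This is precisely where the all-to-all symmetry (ensuring invariance of $L$ under $F$) and the uniform expansion of $T$ together play their essential role.
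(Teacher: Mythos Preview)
Your proposal is essentially correct and follows the same route as the paper: the paper proves Theorem~\ref{thm:coupled} in detail for $m=2$ and explicitly states that Theorem~\ref{thm:block} follows by ``the same proof with obvious modifications,'' the key point being that $L$ is $F$-invariant with uniform expansion in the $(k-1)$-dimensional orthogonal complement. Your sketch of the Lipschitz approximation for $\DD_q(u_n)$, the three-range splitting of $\DD'_q(u_n)$ (short iterates where the intersection is empty by normal repulsion, intermediate iterates handled by expansion, long iterates by mixing), and the computation of $\theta$ via $\mu(A_n^{(1)})/\mu(U_n)$ all match the paper's argument.

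One small slip: you write that the intermediate-range sum ``vanishes after letting $q\to\infty$.'' This is not how the argument runs here. Since $L$ is fixed by $F$, the correct value is $q=1$, fixed throughout; the paper shows directly that $A_n^{(1)}\cap F^{-j}A_n^{(1)}=\emptyset$ for $1\le j\le C_4\log n$ by uniform repulsion, and then bounds the remaining ranges as $n\to\infty$ with $q=1$ held fixed. No limit in $q$ is taken or needed. Also, your normalization remark (``$\tilde h$ identified with the appropriate marginal'') glosses over the same point the paper handles via the quasi-H\"older regularity $|\tilde h|_\alpha<\infty$ to justify replacing $\tilde h$ by its trace on $L$; this is where assumption~(A) is actually used in the extremal-index computation.
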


\section{Extreme value  scheme of proof}\label{sec.extremeproof}

Our proofs are based on ideas from extreme value theory. We will use two conditions, adapted to the dynamical setting, introduced in the important work~\cite{FFT5}
that are based on $D(u_n)$ and $D_2 (u_n)$ but also allow a computation of the extremal index.

Let $X_n=\phi\circ T^n$ and define 
\[
A_n^{(q)} :=\lbrace X_0>u_n, X_1 \le u_n,\ldots, X_q\leq  u_n\rbrace
\]
For $s,l \in \mathbb{N}$ and a set $B\subset M$, let
\[
\mathscr{W}_{s,l}(B)=\bigcap_{i=s}^{s+l-1} T^{-i}(B^c).
\]
Next we describe the two conditions introduced in~\cite{FFT5}.\\

\noindent {\textbf{ Condition $\DD_q(u_n)$}}: We say that $\DD_q(u_n)$ holds for the sequence $X_0,X_1,\ldots$ if, for every  $\ell,t,n\in \mathbb{N}$
\[
\left|\mu\left(A_n^{(q)}\cap
  \mathscr{W}_{t,\ell}\left(A_n^{(q)}\right) \right)-\mu\left(A_n^{(q)}\right)
  \mu\left( \mathscr{W}_{0,\ell}\left(A_n^{(q)}\right)\right)\right|\leq \gamma(q,n,t),
\]
where $\gamma(q,n,t)$ is decreasing in $t$ and  there exists a sequence $(t_n)_{n\in \mathbb{N}}$ such that $t_n=o(n)$ and
$n\gamma(q,n,t_n)\to0$ when $n\rightarrow\infty$.\\

 We consider the sequence $(t_n)_{n\in\N}$ given by condition $\DD_q(u_n)$ and let $(k_n)_{n\in\N}$ be another sequence of integers such that as $n\to\infty$,
\[
k_n\to\infty\quad \mbox{and}\quad  k_n t_n = o(n).
\]

\noindent {\textbf{ Condition $\DD'_q(u_n)$}}:  We say that $\DD'_q(u_n)$
holds for the sequence $X_0,X_1,\ldots$ if there exists a sequence $(k_n)_{n\in\N}$ as above  and such that
\[
\lim_{n\rightarrow\infty}\,n\sum_{j=q+1}^{\lfloor n/k_n\rfloor}\mu\left( A_n^{(q)}\cap T^{-j}\left(A_n^{(q)}\right)
\right)=0.
\]

We note that, taking $U_n:=\{ X_0>u_n\}$ for $A_n^{(q)}$, which corresponds to non-periodic behavior, in condition $\DD'_q(u_n)$ corresponds to condition $D'(u_n)$ from~\cite{LLR}. We will abuse notation and consider
$ U_n:=\{ X_0>u_n\}$  as the  case of  $A_n^{(q)}$ with $q=0$. 

Now let
\[
\theta=\lim_{n\to\infty}\theta_n=\lim_{n\to\infty}\frac{\mu(A^{(q)}_n)}{\mu(U_n)}.
\]

\begin{remark} 
 In a dynamical setting verifying these two conditions picks up the main  underlying  periodicity or more generally recurrence properties of the system, for example 
returns to a periodic point of prime period $q$, and determines the extremal index. However, as we demonstrate,  other recurrent phenomena may give rise to an extremal index not equal to unity.
We show below that the self-intersection of a line segment  $L$, $T(L)\cap L\not = 0$ (none of whose points are periodic) may lead to a nontrivial  extremal index
for functions maximized on $L$. For a more detailed discussion of extremal indices see~\cite{FFT5}. 
\end{remark}

From \cite[Corollary~2.4]{FFT4}, it follows that to establish Theorem 2.1  it suffices to prove  conditions $\DD_q(u_n)$ and $\DD'_q(u_n)$ for $q=0$ in the 
non-recurrent case $\theta =1$ and for 
$q>0$ corresponding to the `period' of the  cases where there is some  recurrence phenomena ($\theta <1$).
 %$$\lim_{n\to\infty}n\p(X_0>u_n)=\tau$$
%then
In both cases
\[
\lim_{n\to\infty}\mu(M_n\leq u_n (y))= e^{-\theta y}.
\]

The scheme of the proof of Condition $\DD_q(u_n)$ is itself somewhat standard~\cite{Dichotomy,FHN} and is a consequence of suitable decay of 
correlation estimates. We outline it for completeness,
indicating the modifications that need to be made for the different geometries of $A^{(q)}_n$. 
The main work will be in establishing  Condition $\DD'_q(u_n)$.

\subsection{Proof of Theorem~\ref{thm.anosov}}

In the first instance we check condition $\DD_q(u_n)$. We recall some useful statistical properties of hyperbolic toral automorphisms.
In the case where $\Phi$ and $\Psi$ are Lipschitz continuous functions, it is known for hyperbolic toral automorphisms that there exists $C>0$, 
$\tau_0\in(0,1)$ such that
\begin{equation}\label{eq.cor-lip}
\left|\int \Phi (\Psi\circ T^n) d\mu-\int \Phi d\mu \int \Psi d\mu \right| \leq C\tau_{0}^n\|\Phi\|_{\mathrm{Lip}}\|\Psi\|_{\mathrm{Lip}},
\end{equation}
Furthermore if $\Psi$ is constant on local stable leaves corresponding to a Markov partition, 
then the Lipshitz norm of $\Psi$ on the right-hand side of equation (\ref{eq.cor-lip}) can be replaced by the
$L^{\infty}$ norm~\cite[Section 4]{Y98}. This fact will be useful when checking $\DD_q(u_n)$, see Proposition \ref{prop.ddq} in Section
\ref{sec.ddq} below.

Consider now a set $D$, whose boundary $\partial D$ is a union
of a finite number of smooth curves, so that $\mu(\partial D)=0$. Let $W^{s}_{1}(x)$ denote the local stable manifold through $x$. 
We define,
\begin{equation}\label{eq.Hk}
 H_{k}(D):= \left\{x \in D:T^{k}(W^s_1(x))\cap \partial D\ne\emptyset\right\}.
 \end{equation}
In Section \ref{sec.ddq} we show roughly that $\mu(H_{k} (D))$ decreases exponentially in $k$.

\subsection{Checking condition $\DD_q (u_n)$}\label{sec.ddq}
This argument is a minor adjustment of similar estimates in ~\cite{Dichotomy,FHN}. We state the following proposition.
\begin{proposition}\label{prop.ddq}
For every  $\ell,t,n\in \mathbb{N}$, there exists $\lambda_0\in(0,1)$, and $C>0$ such that
\[
\left|\mu\left(A_n^{(q)}\cap
  \mathscr{W}_{t,\ell}\left(A_n^{(q)}\right) \right)-\mu\left(A_n^{(q)}\right)
  \mu\left( \mathscr{W}_{0,\ell}\left(A_n^{(q)}\right)\right)\right|\leq C(n^{-2}+n^2\lambda^{t}_0).
\]
\end{proposition}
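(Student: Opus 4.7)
Set $A:=A_n^{(q)}$ and $B:=\mathscr{W}_{0,\ell}(A_n^{(q)})$. Since $\mathscr{W}_{t,\ell}(A)=T^{-t}\mathscr{W}_{0,\ell}(A)$ and $\mu$ is $T$-invariant, the quantity to estimate is the correlation
\[
\mathrm{Cor}_t:=\left|\int \mathbf{1}_A\cdot\mathbf{1}_B\circ T^t\,d\mu-\int\mathbf{1}_A\,d\mu\int\mathbf{1}_B\,d\mu\right|.
\]
The plan is the standard Lipschitz-approximation route of \cite{Dichotomy,FHN}, modified to account for the fact that $A$ is a thin tube rather than a disk: approximate $\mathbf{1}_A$ by a Lipschitz function of controlled norm and $L^{1}$-error, approximate $\mathbf{1}_B$ by a function that is constant along local stable manifolds, and then apply the enhanced form of \eqref{eq.cor-lip} in which the Lipschitz norm of the second factor can be replaced by its $L^{\infty}$ norm \cite[\S4]{Y98}.

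\emph{Step 1 (Lipschitz approximation of $\mathbf{1}_A$).} Since $A\subset\{\phi>u_n\}$ lies inside a tubular $e^{-u_n}$-neighborhood of $L$ with $e^{-u_n}\lesssim 1/n$ (from $n\mu(\phi>u_n)\to\tau$), and the boundary $\partial A$ is a union of finitely many smooth arcs of total length $O(1)$, a standard mollification supported in the $\delta$-thickening $A_\delta$ produces $\Phi\in\mathrm{Lip}(\T^2,[0,1])$ satisfying $\mathbf{1}_A\le\Phi\le\mathbf{1}_{A_\delta}$, $\|\Phi\|_{\mathrm{Lip}}\le C/\delta$ and $\|\Phi-\mathbf{1}_A\|_{L^1}\le C\delta$. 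Choosing $\delta=n^{-2}$ gives $\|\Phi\|_{\mathrm{Lip}}\le Cn^{2}$ and $\|\Phi-\mathbf{1}_A\|_{L^{1}}\le Cn^{-2}$.

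\emph{Step 2 (Stable-leaf-constant approximation of $\mathbf{1}_B$).} Fix a Markov partition for $T$ and set $B^{\mathrm{s}}:=\{x\in B:W^{s}_{1}(x)\subset B\}$, and let $\Psi:=\mathbf{1}_{B^{\mathrm{s}}}$. Then $\Psi$ is constant along every local stable manifold of the Markov partition, $\Psi\le\mathbf{1}_B$, and $\|\mathbf{1}_B-\Psi\|_{L^{1}}=\mu(B\setminus B^{\mathrm{s}})\le\mu(H_{0}(B))$. Next, apply the enhanced version of \eqref{eq.cor-lip}: since $\Phi$ is Lipschitz and $\Psi$ is constant on local stable leaves,
\[
\left|\int\Phi\cdot\Psi\circ T^{t}\,d\mu-\int\Phi\,d\mu\int\Psi\,d\mu\right|\le C\tau_{0}^{t}\|\Phi\|_{\mathrm{Lip}}\|\Psi\|_{\infty}\le Cn^{2}\tau_{0}^{t}.
\]
The telescoping identity $\mathbf{1}_A\mathbf{1}_B\circ T^{t}-\Phi\,\Psi\circ T^{t}=(\mathbf{1}_A-\Phi)\mathbf{1}_B\circ T^{t}+\Phi(\mathbf{1}_B-\Psi)\circ T^{t}$ (and its product-integral counterpart), together with $\|\cdot\|_\infty\le 1$ on all four functions, then yields
\[
\mathrm{Cor}_{t}\le 2\|\mathbf{1}_A-\Phi\|_{L^{1}}+2\,\mu(H_{0}(B))+Cn^{2}\tau_{0}^{t}\le C(n^{-2}+n^{2}\tau_{0}^{t})+2\,\mu(H_{0}(B)).
\]

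\emph{Main obstacle: bounding $\mu(H_{0}(B))\lesssim n^{-2}$.} This is the geometric crux. Since $\partial B\subset\bigcup_{i=0}^{\ell-1}T^{-i}\partial A$, for each $i$ the condition $W^{s}_{1}(x)\cap T^{-i}\partial A\neq\emptyset$ is equivalent, after pushing forward by $T^{i}$, to $y:=T^{i}x$ lying in the stable-$\lambda^{-i}$-neighborhood of $\partial A$. Under transversality of $v^-$ to $\partial A$ (case (1) of Theorem~\ref{thm.anosov}), this neighborhood has measure $\le C\lambda^{-i}\cdot\mathrm{length}(\partial A)/\sin\angle(\partial A,v^{-})$, and the extra smallness needed to reach the $n^{-2}$ scale comes from intersecting only with the portion of the $\lambda^{-i}$-stable-thickening that actually contributes to $B\setminus B^{\mathrm{s}}$---namely, the piece adjacent to $A$, whose measure carries an additional factor coming from the $1/n$-width of $A$ itself. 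A careful bookkeeping of these two scales ($1/n$ for the tube width and $\lambda^{-i}$ for the stable-leaf shrinkage), together with summation in $i$, yields the bound $\mu(H_{0}(B))\le Cn^{-2}$, uniformly in $\ell$. The cases (2)--(6) of Theorem~\ref{thm.anosov}, where $L$ is aligned with $v^{+}$ or $v^{-}$ and the direct transversality argument fails, are handled by absorbing the dangerous recurrences into the choice of $q$ in $A_{n}^{(q)}$: intersecting with the conditions $X_{1}\le u_n,\ldots,X_{q}\le u_n$ strips from $A$ precisely those points whose forward iterates stay close to $L$, restoring an effective transversality for the reduced set $A_n^{(q)}$ and making the same geometric estimate go through.
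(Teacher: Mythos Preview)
Your Step 1 is fine and matches the paper. The gap is in Step 2, specifically the claimed bound $\mu(H_0(B))\le Cn^{-2}$ uniformly in $\ell$. This bound is false. Already the $i=0$ contribution consists of all $x\in B$ with $W^s_1(x)\cap\partial A\neq\emptyset$; since $\partial A$ contains two segments of length $\sim l(L)=O(1)$ (the sides of the tube parallel to $L$), the set of points whose unit local stable leaf meets $\partial A$ has Haar measure bounded below by a positive constant independent of $n$. The ``extra smallness'' you invoke does not exist: $B=\mathscr W_{0,\ell}(A)$ has measure close to $1$, not $O(1/n)$, so points of $B\setminus B^{\mathrm s}$ are in no way confined to the $1/n$-tube around $L$. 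Summing your $\lambda^{-i}$ bounds over $i$ therefore gives at best $\mu(H_0(B))=O(1)$, which renders the final inequality vacuous.

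The paper's fix (Lemma~\ref{lemma:dun-prelim}) is to avoid approximating $\mathbf 1_B$ directly. One writes $\mathbf 1_B\circ T^{t}=(\mathbf 1_B\circ T^{\lfloor t/2\rfloor})\circ T^{t-\lfloor t/2\rfloor}$ and replaces $\Psi_{\lfloor t/2\rfloor}:=\mathbf 1_B\circ T^{\lfloor t/2\rfloor}$ by its stable-leaf-constant version $\overline\Psi_{\lfloor t/2\rfloor}$. The set where these differ is contained in $\bigcup_{k\ge \lfloor t/2\rfloor}H_k(A_n^{(q)})$ --- crucially with $D=A_n^{(q)}$, \emph{not} $D=B$ --- and Lemma~\ref{lemma:annulus1} gives $\mu(H_k(A_n^{(q)}))\le C\lambda^{-k}$ with $C$ independent of $n$. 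Summing from $k=\lfloor t/2\rfloor$ yields an error $O(\lambda^{-t/2})$, which is paired with $\|\Phi\|_\infty=O(1)$ rather than with $\|\Phi\|_{\mathrm{Lip}}=O(n^2)$. Combined with your Step~1 this produces
\[
\mathrm{Cor}_t\le C\bigl(n^{-2}+\lambda^{-\lfloor t/2\rfloor}+n^2\tau_1^{\lfloor t/2\rfloor}\bigr)\le C(n^{-2}+n^2\lambda_0^{t}),
\]
with $\lambda_0=\max(\lambda^{-1},\tau_1)^{1/2}$. The missing idea, then, is to spend half of the time gap $t$ on stable contraction rather than attempt to extract an $n^{-2}$ from the geometry of $B$. (Incidentally, this argument is insensitive to whether $L$ is transverse to $v^\pm$; the alignment issues in cases (2)--(6) of Theorem~\ref{thm.anosov} enter only in the verification of $\DD'_q$, not here.)
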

Condition $\DD_q (u_n)$ immediately follows from this. We can take $t_n=(\log n)^5$ so that
$n\gamma(q,n,t)\to 0$. 

The proof of Proposition~\ref{prop.ddq} is as follows. To check condition $\DD_q (u_n)$ we use decay of correlations. The main problem
in estimating the correlation function $\Theta_n(\Phi,\Psi)$ (recall equation \eqref{mixing}) is that the relevant indicator 
functions $\Phi=1_{A^{(q)}_n}$ and $\Psi=1_{\mathscr W_{0,\ell}\left(\A_n\right)}$ of the sets $A^{(q)}_n$ and
$\mathscr W_{0,\ell}\left(\A_n\right)$ are not Lipschitz continuous. 
Standard smoothing methods can be used to approximate $\Phi$, but $\Psi$ cannot be uniformly approximated
by a Lipschitz  function: the level set $\Psi=1$ has a geometry that becomes increasingly complex (i.e. with multiple connectivity) 
as $\ell$ increases. Fortunately, we can employ a further trick to approximate $\Psi$. This is done using a function that is constant on local stable manifolds.
This allows us to use a decay of correlations estimate using the $L^{\infty}$ norm. As part of this approximation we first estimate 
$\mu(H_k(D))$ with $D=A^{(q)}_n$. The geometry of the set $A^{(q)}_n$ will be important in calculating this estimate. 

\begin{lemma}\label{lemma:annulus1}
Consider the set $D=A^{(q)}_n$. Then there exists $C>0$ such that, for all $k$,
\begin{equation}\label{annulus}
 \mu(H_{k}(D))\le  C \lambda^{-k},
\end{equation}
where $\lambda^{-1}<1$ is the (uniform) contraction rate along the stable manifolds for the hyperbolic toral automorphism.
\end{lemma}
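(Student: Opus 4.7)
The plan is to combine uniform stable contraction under forward iteration with a one-dimensional length bound for $\partial D=\partial A_n^{(q)}$, then close the argument with a Fubini-type estimate for $\epsilon$-tubular neighborhoods. First I would use hyperbolicity: since $T$ has contraction rate $\lambda^{-1}$ along $v^{-}$, the local stable leaf $W_1^s(x)$ is a straight segment in direction $v^{-}$ of length at most one, and $\operatorname{diam}\bigl(T^k W_1^s(x)\bigr)\le C_0\lambda^{-k}$ for every $x$. Consequently, the condition $T^k(W_1^s(x))\cap \partial D\ne\emptyset$ forces $T^k(x)$ itself to lie in the Euclidean $C_0\lambda^{-k}$-neighborhood $N_{C_0\lambda^{-k}}(\partial D)$. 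Hence $H_k(D)\subset T^{-k}\bigl(N_{C_0\lambda^{-k}}(\partial D)\bigr)$, and $T$-invariance of Haar measure yields
\[
\mu(H_k(D))\le \mu\bigl(N_{C_0\lambda^{-k}}(\partial D)\bigr).
\]

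Next I would control the total one-dimensional length of $\partial A_n^{(q)}$. From
\[
A_n^{(q)}=\{\phi>u_n\}\cap\bigcap_{j=1}^{q}T^{-j}\{\phi\le u_n\}
\]
we have $\partial A_n^{(q)}\subset \bigcup_{j=0}^{q}T^{-j}\partial\{\phi>u_n\}$. The super-level set $\{\phi>u_n\}$ is precisely the open $e^{-u_n}$-tube about the segment $L$; its boundary is the union of two straight sides parallel to $L$ and two semicircular caps at the endpoints, with total $\mathcal H^1$-measure at most $2l(L)+2\pi e^{-u_n}$. Since $\|DT^{-1}\|\le\lambda$, each preimage $T^{-j}\partial\{\phi>u_n\}$ is a finite union of smooth arcs of total length at most $\lambda^{j}\bigl(2l(L)+2\pi e^{-u_n}\bigr)$. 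Summing over $j=0,\dots,q$ produces a constant $C_q$ bounding the total $\mathcal H^1$-length of $\partial A_n^{(q)}$, uniformly in $n$ once $n$ is large enough that the tube about $L$ is not self-overlapping.

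Finally, a standard Fubini/coarea estimate bounds the two-dimensional Lebesgue measure of the $\epsilon$-neighborhood of a finite union of smooth arcs of total length $C_q$ by $2C_q\epsilon+O(\epsilon^2)$. Applied with $\epsilon=C_0\lambda^{-k}$, this yields
\[
\mu(H_k(D))\le 2C_0C_q\lambda^{-k}+O(\lambda^{-2k})\le C\lambda^{-k},
\]
where $C$ depends on $q$ and $l(L)$ but not on $k$ or $n$, as required.

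The main obstacle I expect is the length bookkeeping in the second step: one must verify that for each $j\le q$ the preimage $T^{-j}\partial\{\phi>u_n\}$ really decomposes as a finite union of smooth arcs, and keep the length constants under control uniformly in $n$. Because $T$ is a linear diffeomorphism of $\T^2$, this reduces entirely to the single ingredient $\|DT^{-1}\|\le\lambda$ together with the explicit description of the tube boundary, so the obstacle is careful geometric bookkeeping rather than a genuine analytic difficulty.
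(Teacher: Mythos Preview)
Your argument is correct and follows essentially the same approach as the paper: both use uniform stable contraction to place $T^k(H_k(D))$ inside a $\lambda^{-k}$-tubular neighborhood of $\partial D$, then bound the Lebesgue measure of that tube by a constant times $\lambda^{-k}$ using that $\partial A_n^{(q)}$ has uniformly bounded total length. Your boundary bookkeeping via $\partial A_n^{(q)}\subset\bigcup_{j=0}^{q}T^{-j}\partial\{\phi>u_n\}$ is slightly more explicit than the paper's, which simply notes that $A_n^{(q)}$ has boundedly many connected components with bounded-length straight-line boundaries, but the substance is identical.
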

\begin{proof} We follow~\cite[Proposition 4.1]{Dichotomy}, and consider also the geometrical properties of $D$.
Since the local stable manifolds contract uniformly there exists
$C_1>0$ such that $\dist\,(T^n(x), T^n(y))\le C_1 | \lambda|^{-n}$ for all
$y\in W^s_1(x).$ This implies that $|T^k(W_1^s(x))|\le C_1\lambda^{-k}$. Therefore, for every  $x\in H_{k}(D)$,
 the leaf $T^k(W^s_1(x))$ lies in an tubular region of width $2 /|\lambda |^k$ around $\partial D$. To measure of the size of
this tube we note that $m(D_{\epsilon})\leq \epsilon C_{D}$, where $C=\epsilon c_q\ell_D$. (Again recall the definition
of the tubular region $D_{\epsilon}$ given in Section \ref{sec.anosov}).
The constant $c_q$ depends on
the number of connected components of $A^{(q)}_n$, (which is bounded), and $\ell_D$ is the maximum length of a connected component
of $\partial D$. This is also bounded, since $\partial D$ is formed of straight lines of bounded length. The lemma follows by taking 
$\epsilon=\lambda^{-k}$. 
\end{proof}

The next lemma also holds for $\{X_0>u_n\}$ in place of $A_n^{q} $, and the proof is the same as~\cite[Lemma 4.2]{Dichotomy}.
Again we give the main steps, indicating the role of Lemma \ref{lemma:annulus1}.
The constant $\tau_1$ in the next lemma comes from the exponential decay of correlations of Lipschitz observables on hyperbolic toral automorphisms.
\begin{lemma}\label{lemma:dun-prelim}
Suppose
 $\Phi:M\to \R$ is a Lipschitz map and $\Psi$ is the indicator function
 \[
\Psi:= \I_{\mathscr W_{0,\ell}\left(\A_n\right)}.
 \]
Then there exists $0<\tau_1<1$ such that for all $j\geq 0$
\begin{equation}
 \left|\int\Phi\,(\Psi\circ T^j)\, \text{d}\mu - \int\Phi\text{d}\mu \int\Psi\text{d}\mu \right|\le \C\,\left(\|\Phi\|_\infty \,
\lambda^{-\floor{j/2}}+\|\Phi\|_{\text{Lip}}\,\,\tau_{1}^{\floor{j/2}}\right).
\end{equation}
\end{lemma}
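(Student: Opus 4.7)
The plan is to exploit the stable-leaf refinement of decay of correlations for hyperbolic toral automorphisms mentioned just after (\ref{eq.cor-lip}): when one of the observables is constant on local stable leaves of a Markov partition, the Lipschitz norm of that observable on the right-hand side of (\ref{eq.cor-lip}) can be replaced by its $L^\infty$ norm. Since the indicator $\Psi=\I_{\mathscr W_{0,\ell}(A_n^{(q)})}$ is neither Lipschitz nor constant on stable leaves, and the geometric complexity of its level set grows with $\ell$, I would split the time as $j=m+m'$ with $m:=\lfloor j/2\rfloor$ and $m':=j-m$, and rewrite $\Psi\circ T^j=(\Psi\circ T^{m'})\circ T^m$. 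Setting $G:=\Psi\circ T^{m'}$ and $B:=\mathscr W_{0,\ell}(A_n^{(q)})$, I would build $\widehat G$ to be constant on every local stable leaf $W^s_1(x)$ and to coincide with $G$ off the ``bad'' set $\mathcal E:=\{x:W^s_1(x)\cap\partial(T^{-m'}B)\neq\emptyset\}$, setting $\widehat G:=0$ on $\mathcal E$.

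Decomposing the correlation,
\[
\int\Phi(\Psi\circ T^j)\,d\mu-\int\Phi\,d\mu\int\Psi\,d\mu = \int\Phi\,(G-\widehat G)\circ T^m\,d\mu + \left(\int\Phi\,(\widehat G\circ T^m)\,d\mu-\int\Phi\,d\mu\int\widehat G\,d\mu\right) + \int\Phi\,d\mu\int(\widehat G-G)\,d\mu,
\]
using $T$-invariance to write $\int G\,d\mu=\int\Psi\,d\mu$. The first and third terms are each controlled by $\|\Phi\|_\infty\|G-\widehat G\|_1\le\|\Phi\|_\infty\,\mu(\mathcal E)$. For the middle term, because $\widehat G$ is constant on local stable leaves and $\|\widehat G\|_\infty\le 1$, the stable-leaf refinement of decay of correlations applied at time $m$ provides the bound $C\tau_1^m\|\Phi\|_{\mathrm{Lip}}$.

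The crux is showing $\mu(\mathcal E)\le C\lambda^{-m'}$ uniformly in $\ell$. By $T$-invariance, $\mu(\mathcal E)$ equals the measure of the set of $z\in\T^2$ whose stable segment of length at most $C_1\lambda^{-m'}$ meets $\partial B$. Using $\partial B\subseteq\bigcup_{i=0}^{\ell-1}T^{-i}(\partial A_n^{(q)})$, a union bound and a second application of invariance reduce the estimate to $\sum_{i=0}^{\ell-1}\mu(H_{m'+i}(A_n^{(q)}))$; Lemma~\ref{lemma:annulus1} bounds each summand by $C\lambda^{-(m'+i)}$, and the resulting geometric series is summable to $C'\lambda^{-m'}\le C'\lambda^{-m}$ with a constant independent of $\ell$. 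Combining these three pieces yields the asserted inequality, with the $\lambda^{-\lfloor j/2\rfloor}$ term arising from the approximation error and the $\tau_1^{\lfloor j/2\rfloor}$ term from the stable-leaf mixing. The main obstacle is precisely this uniform-in-$\ell$ control: without the summability supplied by the exponential factor in Lemma~\ref{lemma:annulus1}, the growing complexity of $\partial B$ as $\ell\to\infty$ would prevent $\widehat G$ from approximating $G$ well enough, and the whole approximation scheme would collapse.
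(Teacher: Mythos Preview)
Your proposal is correct and follows essentially the same approach as the paper: split the time in half, replace $\Psi\circ T^{\lceil j/2\rceil}$ by a version constant on local stable leaves, apply the Lipschitz-versus-$L^\infty$ decay of correlations to the middle term, and control the approximation error via the geometric sum $\sum_{k\ge \lfloor j/2\rfloor}\mu(H_k(A_n^{(q)}))$ from Lemma~\ref{lemma:annulus1}. The only cosmetic difference is that the paper builds its stable-leaf-constant version by evaluating at a distinguished point on each leaf rather than by zeroing out on the bad set $\mathcal E$, but this does not affect the estimates.
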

\begin{proof}
Following Lemma \cite[Lemma 4.2]{Dichotomy}, we take a version $\overline{\Psi}$ of $\Psi$ that is constant on local stable manifolds,
for example by taking a distinguished point $x^*$
on each local stable manifold $W^s_1 (x)$ and defining $\overline{\Psi}(y) =\Psi (x^*)$ for all $y\in W^s_1 (x)$.
We let $\Psi_j=\Psi\circ T^j$, and again denote $\overline{\Psi}_j$ as the relevant version of $\Psi_j$ (constant on local stable manifolds).
A simple application of the triangle inequality gives the following bound:
\begin{equation}
\Theta_j(\Phi,\Psi)\leq C\left(\|\Phi\|_{\infty}\mu \{\overline{\Psi}_{j/2}\neq\Psi_{j/2}\}+\|\Phi\|_{\mathrm{Lip}}\tau_{0}^{j/2}\right),
\end{equation}
(recall that $\Theta_j$ is defined in equation \eqref{mixing}).
To estimate $\mu \{\overline{\Psi}_{j/2}\neq\Psi_{j/2}\}$, we consider points $x_1,x_2$ on the same stable manifold, and such that
$x_1\in \mathscr W_{i,\ell}\left(\A_n\right)$, but $x_2\not\in\mathscr W_{i,\ell}\left(\A_n\right)$, (for $i\geq j/2$).
This set is contained in $\cup_{k=i}^{i+\ell-1}H_k(A^{q}_n)$. Hence 
\begin{equation*}
\mu \{\overline{\Psi}_{j/2}\neq\Psi_{j/2}\}\leq\sum_{k=j/2}^{\infty}H_k(A^{(q)}_n)\leq C\lambda^{-j/2}.
\end{equation*}
The conclusion of Lemma \ref{lemma:dun-prelim} follows.
\end{proof}

To continue with the proof of Proposition \ref{prop.ddq}, and hence verify condition $\DD_q (u_n)$, we approximate the characteristic function of the set $\A_n$ by a suitable Lipschitz function. The key estimate is to bound the  Lipschitz norm of the approximation.
% we need to bound its increase as we approach $\partial \A_n$.

%We use the scheme $\int 1_X \psi\circ T-\int!_X \int \psi=\int (1_{X}-\phi)\psi\circ T+\int \phi \psi\circ T-\int \phi \int \psi -\int 1_X\int \psi +\int\phi \int\psi$.

%We build the Lipschitz approximation $\phi$
%of $\I_{\A_n}$.
Let $A_n=\A_n$ and $D_n:=\set{x\in \A_n:\; d_H\left(x, \overline{A_n^c}\right)\geq n^{-2}},$
%$D:=\overline{B_{s-s^{1+\eta}}(\zeta)}$,
 where $\bar A_n^c$ denotes the closure of the complement of the set $A_n$.
Define $\Phi_n:\X\to\R$ by
\begin{equation}
\label{eq:Lip-approximation}
\Phi_n(x)=\begin{cases}
  0&\text{if $x\notin A_n$}\\
  \frac{ d_H(x,A_n^c)}{d_H(x,A_n^c)+
  d_H(x,D_n)}&
  \text{if $x\in A_n\setminus D_n$}\\
  1& \text{if $x\in D_n$}
\end{cases}.
\end{equation}
Note that $\Phi_n$ is Lipschitz continuous with Lipschitz constant given by $n^2$. % $\Phi_n$ is equal to $1$ on $D_n$ and vanishes outside $\A_n$.
Moreover  $\|\Phi_n-\I_{A_n}\|_{L^1(m)}\leq C/n^2$ for some constant $C$.
%
% We approximate the indicator function $\I_{U_n}$ by a Lipschitz continuous function $\Phi_n$ as follows.
% Since $U_n$ is a ball of some radius $r_n \sim \frac{1}{\sqrt{n}}$ centeblack at the point $\zeta$,
%  we define $\Phi_n$ to be $1$ inside a ball centeblack at $\zeta$ of radius $r_n-n^{-\frac{2}{\bar{q}}}$, where $\bar{q}$ comes from Assumption A and decaying to $0$ so that on the boundary of
%  $U_n$, $\Phi_n$ vanishes. The Lipschitz norm of $\Phi_n$ is seen to be bounded by $n^{\frac{2}{\bar{q}}}$
%  and $\|\I_{U_n}- \Phi_n \|_1\le \frac{1}{n^2}$.
It follows that
\begin{align}
\Big|\int \I_{\A_n}\,\left(\Psi_{\floor{j/2}}\circ T^{j -\floor{j/2}}\right) &\,d\mu - \mu (\A_n)\int \Psi d\mu \Big|\nonumber\\
&\le \abs{\int \left(\I_{\A_n} - \Phi_n\right)\Psi_{\floor{j/2}}\,d\mu }+\C \left(\|\Phi_n \|_\infty \,\,j^2\,\,\lambda^{\floor{j/4}}+\|\Phi_n \|_{\text{Lip}}\,\,\tau_{1}^{\floor{j/2}}\right)\nonumber\\
% continued from the previous line
&\quad+ \abs{\int \left(\I_{\A_n} - \Phi_n \right)\,d\mu \int \Psi_{\floor{j/2}}\,d\mu },
\end{align}
for some generic constant $\C$. Thus
\[
 \abs{\mu \left(\A_n\cap \mathscr W_{j,\ell}(\A_n)\right) - \mu (\A_n)\,\mu \left(\mathscr W_{0,\ell}(\A_n)\right)}\le \gamma(n,j)
\] where
\[
 \gamma(n,j) = \C\,\left(n^{-2}+ n^{2} \,\lambda_1^{\floor{j/2}}\right),
\] and
\[
\lambda_1 = \max\,\set{\tau_1 , \lambda^{-1}}.\]
Thus if, for instance, we choose  $j=t_n=(\log n)^{5}$, then $n\gamma(n, t_n)\to 0$ as $n\to\infty.$ This completes the proof.

\subsection{Checking condition $\DD'_q(u_n)$}\label{sec.DDprime}

We make  the following decomposition:
\begin{multline*}
n\sum_{j=q+1}^{\lfloor n/k_n\rfloor} \mu (A^{(q)}_n\cap T^{-j}(A^{(q)}_n))=n\sum_{j=q+1}^{R_n} \mu (A^{(q)}_n\cap T^{-j} (A^{(q)}_n))+n\sum_{j=R_n+1}^{(\log n)^5} \mu (A^{(q)}_n\cap T^{-j}(A^{(q)}_n))\\ +n\sum_{(\log n)^5+1}^{\lfloor n/k_n\rfloor} \mu (A^{(q)}_n\cap T^{-j}(A^{(q)}_n)),
\end{multline*}
where the sequence $R_n\to\infty$ (as $n\to\infty$) will be chosen later. Recall that for $q=0$,
$A^{(q)}_n=U_n$. By exponential decay of correlations and a suitable Lipschitz approximation the last sum tends to 0 as $n\to\infty$,
so it suffices to estimate the  two sums  where $1\le j\le (\log n)^5$.

\noindent {\bf Case $L$ transverse to stable and unstable directions.}

Fix $y$ and define $u_n(y)$ by the requirement $n \mu \{ x: \phi (x) \ge u_n (y)  \}=y$. Henceforth we will drop the dependence on $y$ and write 
simply $u_n$ for convenience. We define
$U_n:= \{ x: \phi (x) \ge u_n \}$. Geometrically $U_n$ resembles a parallel strip of width  $\frac{2}{n}$. 
%Our assumption  implies that $U_n$ is transverse to both $v^{+}$ and $v^{-}$. 
 
We will verify the short return condition with $q=0$. Consider the set $T^{-j} U_n\cap U_n=\{ x: T^j (x) \in U_n, x\in U_n \}$.   $T^{j} U_n$
   is a union of parallelogram-like strips  corresponding to each winding around the torus and such strip has  width $O(\frac{\lfloor\lambda^{-j}\rfloor}{n})$ and 
 length $O(1)$, the precise constants depending on the angle between $T^j L$ and $L$ as $T^j L$ aligns to the unstable
 direction. There are
 approximately $\lfloor\lambda^j\rfloor$ such parallelogram strips. Each strip intersects $U_n$ in an area of measure $O(\lfloor \lambda^{-j} \rfloor n^{-2})$ by transversality. See Figure \ref{fig.anosov1}
 
 Hence 
 \[
n \sum_{j=1}^{(\log n)^5} \mu ( T^{-j} U_n\cap U_n )=O\left(\frac{(\log n)^5}{n}\right).
 \]
  
 Thus the extremal index $\theta=1$.

\begin{figure}[h!]\label{fig.transverse}
\centering
\begin{minipage}{0.49\textwidth}
\centering
\begin{tikzpicture}[scale=4]
\begin{scope}
\draw[thick] (0,0) rectangle (1,1);
\clip (0,0) rectangle (1,1);
\draw[thick,rotate=pi/2] (0,0)--(1,1);
\draw[thick,rotate=pi/2] (1,0)--(0,1);
\draw[thick,black,->,shorten >=1cm,rotate=pi/2] (0,0)--(1,1);
\draw[thick,black,->,shorten >=1.25cm,rotate=pi/2] (0,0)--(1,1);
\draw[thick,black,<-,shorten <=1cm,rotate=pi/2] (0,0)--(1,1);
\draw[thick,black,<-,shorten <=1.25cm,rotate=pi/2] (0,0)--(1,1);
\draw[thick,black,<-,shorten <=3cm,rotate=pi/2] (1,0)--(0,1);
\draw[thick,black,<-,shorten <=3.25cm,rotate=pi/2] (1,0)--(0,1);
\draw[thick,black,->,shorten >=3cm,rotate=pi/2] (1,0)--(0,1);
\draw[thick,black,->,shorten >=3.25cm,rotate=pi/2] (1,0)--(0,1);
\end{scope}
\node at (0.75,0.28) [right] {$v^{-}$};
\node at (0.9,0.9) [below] {$v^{+}$};
\draw[thick] (0,0.1)--(0.6,1);
\draw[thick,->,dotted] (0,0.8)--(0.53333,0);
\node at (0,0.8) [left] {$v$};
\node at (0.75,1) [above] {$x\cdot v=c$};
\draw[fill = gray,opacity=0.2] (0,0)--(0,0.2)--(0.53333,1)--(0.6667,1)--(0,0);
\draw[thick,|-|] (0,0)--(0,0.2);
\node at (0,0.1) [left] {$U_n$};
\end{tikzpicture}
\caption*{(a)}
\end{minipage}
\begin{minipage}{0.49\textwidth}
\raggedright
\begin{tikzpicture}[scale=4]
\begin{scope}
\draw[thick] (0,0) rectangle (1,1);
\clip (0,0) rectangle (1,1);
\draw[thick,opacity=0.1,rotate=pi/2] (0,0)--(1,1);
\draw[thick,opacity=0.1,rotate=pi/2] (1,0)--(0,1);
\draw[thick,black,->,shorten >=1cm,opacity=0.1,rotate=pi/2] (0,0)--(1,1);
\draw[thick,black,->,shorten >=1.25cm,opacity=0.1,rotate=pi/2] (0,0)--(1,1);
\draw[thick,black,<-,shorten <=1cm,opacity=0.1,rotate=pi/2] (0,0)--(1,1);
\draw[thick,black,<-,shorten <=1.25cm,opacity=0.1,rotate=pi/2] (0,0)--(1,1);
\draw[thick,black,<-,shorten <=3cm,opacity=0.1,rotate=pi/2] (1,0)--(0,1);
\draw[thick,black,<-,shorten <=3.25cm,opacity=0.1,rotate=pi/2] (1,0)--(0,1);
\draw[thick,black,->,shorten >=3cm,opacity=0.1,rotate=pi/2] (1,0)--(0,1);
\draw[thick,black,->,shorten >=3.25cm,opacity=0.1,rotate=pi/2] (1,0)--(0,1);

\draw[thick,opacity=0.3,rotate=pi/2] (0,0.1)--(0.6,1);
\draw[fill = gray,opacity=0.3,rotate=pi/2] (0,0)--(0,0.2)--(0.53333,1)--(0.6667,1)--(0,0);
\draw[opacity=0.3,rotate=pi/2] (0,0)--(0,0.2)--(0.53333,1)--(0.6667,1)--(0,0);
\draw[fill = gray, opacity=0.3,rotate=-pi/2] (0,0.3)--(0.7,1)--(1,1.25)--(0,0.25)--(0,0.3);
\draw[rotate=-pi/2] (0,0.3)--(0.7,1)--(1,1.25)--(0,0.25)--(0,0.3);
\draw[fill = gray, opacity=0.3,rotate=-pi/2] (0,0.35)--(0.65,1)--(0.95,1.25)--(0,0.3)--(0,0.35);
\draw[rotate=-pi/2] (0,0.35)--(0.65,1)--(0.95,1.25)--(0,0.3)--(0,0.35);
\end{scope}
\draw[|-|,thick] (0.53333,1)--(0.6667,1);
\node at (0.6,1) [above] {$O(\frac{1}{n})$};
\draw[|-|,thick] (0,0.3)--(0,0.35);
\node at (0,0.325) [left] {$O(\frac{\lambda^{-j}}{n})$};
\draw [<-,very thick] (0.45,0.75)--(0.65,0.55);
\node at (0.65,0.55) [below] {$T^{j} U_n\cap U_n$};
%\draw[thick,|-|] (0,0)--(0,0.2);
%\node at (0,0.1) [left] {$U_n$};
\end{tikzpicture}
\caption*{(b)}
\end{minipage}
\caption{\label{fig.anosov1} (a) The set $U_n$ and line $L$ for $L$ not aligned with $v^{-}$ or $v^{+}$. (b) Iterations $T^j U_n$ and their intersections with $U_n$.}
\end{figure}

\noindent {\bf Case $L$ aligned with unstable direction.}

 We lift $L$ to $\hat{L}$  on  a fundamental domain of the cover $\R^2$ of $\T^2$ and write $\hat{L}=\hat{p}_1+ tv^+$, $t\in [0,l(L)]$, $\hat{p}_1\in \R^2$. We write the 
endpoint of $\hat{L}$ as $\hat{p}_2$, i.e. $\hat{p}_2=\hat{p}_1+l(L)v^+$. The points $\hat{p}_1$  and $\hat{p}_2$ project to  the corresponding points 
written $p_1=\pi \hat{p_1}$ and $p_2=\pi \hat{p_2} $.

There are 2 main cases, with some subcases. 

Case (a): First assume that the line $\hat{p}_1+t v^+$, $-\infty <  t<\infty$  contains no point with rational coordinates. This holds for 
a measure one set of $\hat{p}_1$ as the set of points in the plane with rational coordinates is countable. In this case $T^nL$, $n\ge 1$,  has no intersections with $L$. To see this suppose $p\in L$ and there exists an $n$ such that
$T^n p=q\in L$. If we take a line segment $\tilde{L}$  in direction $v^+$ of length $2l(L)$  centered at $p$ we see
by expansion that  $\tilde{L}\subset T^n \tilde{L}$  (since $d(p,q)\le l(L)$) and hence $T^n$ restricted to $\tilde{L}$ has a fixed point $\tilde{p}$ in $\tilde{L}$. However, this implies the lift  $\hat{p}_1+t v^+$, $-\infty <t<\infty$ contains a  point with rational coordinates, which is a contradiction.

Since $p_1$ is not periodic by assumption and $\hat{p}_1$ is not in the direction of $v^+$ (otherwise the point $(0,0)$
would be contained in $\hat{p}_1+t v^+$, $-\infty <t < \infty$) the iterates $T^j U_n$ are disjoint for large $n$ for small $j$  i.e.
there exists $R_n \to \infty$ such that $\mu (T^{-j} U_n\cap U_n)=0$ for $j<R_n$. Corollary 2.2 of the recent preprint~\cite{Fan_Yang} implies in this case that the extremal index is one.
We include an alternate  proof for completeness.

For large $n$ the set $T^j U_n$ comprises
$\lfloor \lambda^j \rfloor$ parallel rectangles (aligned with the unstable direction) of width $O(\frac{\lfloor \lambda^{-j}\rfloor}{n})$.
Identifying $\mathbb{T}^2$ with the unit square the set $T^j L\cap ([0,1]\times \{0\})$ consists of $m(j)\sim[\lambda^j]$ points $x_i^j$, $j=1,\ldots, m(j)$.
If for small iterates $T^i L$ there is no intersection with  $([0,1]\times \{0\})$ we extend $T^i L$ in a straight line so that all $x_i^j$, $j=1,\ldots, m(j)$
are defined.  Let $\gamma^{-1}$ denote the slope of $v^+$.
The  set $\{ x^j_i\}_{i=1,\ldots, m(j)}$ is generated by the relation $x_1^j+k\gamma $(mod $1$) for $k=1,\ldots, m(j)$.

We now estimate $\mu (T^{-j} U_n\cap U_n)$. The set $T^j U_n$ has approximately $[\lambda^j]$ windings around the torus
and we now estimate the fraction of these that intersect $U_n$.

Note that $\gamma$ is a quadratic irrational. This implies that $\gamma$ has low discrepancy in the sense that there exists a constant $C>0$
such that 
\[
\sup_{0\le a  <b  \le 1} \{ \# \{x^j_i \in (a,b)\}/\lfloor \lambda^j\rfloor  - (b - a) \} \le C \frac{\log \lfloor \lambda^j\rfloor }{\lfloor \lambda^j\rfloor },
\]
see \cite{N92}.
Hence for $j> R_n$
\[
n\sum_{j=R_n}^{(\log n)^5} \mu (T^{-j} U_n\cap U_n)=O\left(\frac{1}{n}+\frac{\log [\lambda^{R_n}]}{\lambda^{R_n}}\right)=o(1).
\]
This implies that a standard EVL holds with  $\theta=1$. See Figure \ref{fig.anosov2}.

Case (b): Assume that $\hat{p}_1+t v^+$, $-\infty <t<\infty$ contains a point with rational coordinates, note that it will contain at most one as the slope of
$v^+$ is irrational. Such a point projects to a point $p_{per}$ periodic under $T$ with period $q$ say.

Case (b1):  Assume now that $L $ itself contains  $p_{per}$, a periodic point of period $q$.

There will be only one periodic point in $L$ as the slope of $v^+$ is irrational.
Without loss of generality we take $q=1$ by considering $T^q$.
It is easy to see that $\theta=\lim_{n\to\infty}\frac{\mu(A^q_n)}{\mu(U_n)}=1-\frac{1}{\lambda^q}$. 
%Geometrically, $A_n^{(q)}$ consists of two small outer parallel strips in $U_n$ of width $\frac{1}{n\lambda^q})$. %%CHECK%%
The same discrepancy argument as in the case of no periodic orbits shows that there exists an $R_n\to \infty$ such that $T^{-j} A_n^{(q)} \cap A_n^{(q)}=\emptyset$ for $j<R_n$ and 

\[
\sum_{j=R_n}^{(\log n)^5} \mu (T^{-j} A_n^{(q)} \cap A^{(q)}_n)= o\left(\frac{1}{n}\right)
\]
Hence $\theta=1-\frac{1}{\lambda^q}$. See Figure \ref{fig.anosov3}.

Case (b2):   $L $ does not contain a  periodic point.

We first  consider the simplest case where the origin is the fixed point and $\hat{p}_1$ parallel to $v^+$ so that $\hat{p}_1+t v^+$, $-\infty <t<\infty$ contains the  fixed point $(0,0)$
but $L$ does not contain $(0,0)$. The line $\hat{p}_1+ t v^+$, $0\le t<\infty$ has a natural ordering by distance from the origin $(0,0)$. If $\lambda \hat{p}_1 > \hat{p}_2$
then it is easy to see all iterates of $T^n L$ on the torus are disjoint and the arguments given in case (a) apply giving $\theta =1$. 

Suppose now $\lambda \hat{p}_1 < \hat{p}_2$. We take $q=1$ and calculate 
$$\theta =\mu (A_{n}^{(1)})/\mu (U_n)=|\hat{p}_2-\frac{1}{\lambda}\hat{p}_2|/|\hat{p}_2-\hat{p}_1|=(1-\frac{1}{\lambda})\frac{|\hat{p}_2|}{|\hat{p}_2-\hat{p}_1|},$$ as the 
stable manifolds are sent strictly into the region of intersection $U_n\cap TU_n$. (See Figure \ref{fig.anosov4}). The condition $\lambda \hat{p}_1 <\hat{p}_2$ implies
$1< \frac{|\hat{p}_2|}{|\hat{p}_2-\hat{p}_1|} < (1-\frac{1}{\lambda})^{-1}$. By varying $\hat{p}_1$ and $\hat{p}_2$ we may obtain all values in this range.  Hence 
$(1-\frac{1}{\lambda})\le \theta \le 1$.

In the general case of a periodic point $p_{per}$ of period $q$ contained in $\pi (\hat{p}_1+t v^+)$, $-\infty <t<\infty$ we consider 
$T^q$ and the analysis proceeds in the same way by considering the expansion on the  line segment $[\hat{p}_1-\hat{p}_{per},\hat{p}_2-\hat{p}_{per}]$.  We  infer that for general $q\geq 1$, 
$$(1-\frac{1}{\lambda^q})\le \theta \le 1$$
with all values of $\theta$ in this range being realizable. 
The verification 
of condition $\DD'_q(u_n)$ is similar to  case (b1).

\begin{figure}[h!]
\centering
\begin{tikzpicture}[scale=4]
\begin{scope}
\draw[thick] (0,0) rectangle (1,1);
\clip (0,0) rectangle (1,1);
\draw[thick,rotate=pi/2] (0,0)--(1,1);
\draw[thick,rotate=pi/2] (1,0)--(0,1);
\draw[thick,black,->,shorten >=1cm,rotate=pi/2] (0,0)--(1,1);
\draw[thick,black,->,shorten >=1.25cm,rotate=pi/2] (0,0)--(1,1);
\draw[thick,black,<-,shorten <=1cm,rotate=pi/2] (0,0)--(1,1);
\draw[thick,black,<-,shorten <=1.25cm,rotate=pi/2] (0,0)--(1,1);
\draw[thick,black,<-,shorten <=3cm,rotate=pi/2] (1,0)--(0,1);
\draw[thick,black,<-,shorten <=3.25cm,rotate=pi/2] (1,0)--(0,1);
\draw[thick,black,->,shorten >=3cm,rotate=pi/2] (1,0)--(0,1);
\draw[thick,black,->,shorten >=3.25cm,rotate=pi/2] (1,0)--(0,1);
\node at (0.75,0.28) [right] {$v^{-}$};
\node at (0.9,0.9) [below] {$v^{+}$};
\draw[thick,rotate=pi/2] (0,0.1)--(0.9,1);
\draw[thick,<-,dotted,rotate=pi/2] (0.9,0)--(0,0.9);
\node at (0.8,0.1) [left] {$v$};
\draw[fill = gray,opacity=0.2,rotate=pi/2] (0,0)--(1,1)--(0.8,1)--(0,0.2)--(0,0);
\end{scope}
\node at (0.9,1) [above] {$x\cdot v=c$};
\draw[thick,|-|] (0,0.2)--(0,0);
\node at (0,0.1) [left] {$U_n$};
\end{tikzpicture}
\caption{\label{fig.anosov2} The set $U_n$ and line $L$ for $L$ aligned with $v^{-}$.} 
\end{figure}
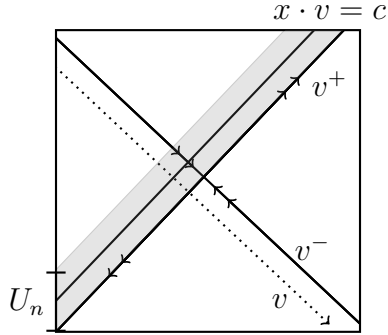

%     {\bf IN FIGURE (2)(b) I THINK THERE IS AN ERROR, WHY IS THE LINE VERTICAL? PERHAPS SIMPLY REMOVE IT.}

\begin{figure}[h!]
\centering
\begin{tikzpicture}[scale=4]
\begin{scope}%[rotate = 10*pi]
%\draw[white] (-1,-1) rectangle (1,1);
\draw[dashed] (0,-0.6)--(0,0.6);
\draw (-0.75,0)--(0.75,0);
\draw (-0.5,-0.25) rectangle (0.5,0.25);
\draw[fill=gray,opacity=0.3] (-0.65,-0.15) rectangle (0.65,0.15);
\draw[pattern = north west lines] (0.4,-0.25) rectangle (0.5,0.25);
\draw[pattern = north west lines] (-0.5,-0.25) rectangle (-0.4,0.25);
\draw[|-|] (-0.5,-0.3) -- (-0.4,-0.3);
\node at (-0.45,-0.3) [below] {$A_n^{(1)}$};
\draw[|-|] (0.4,-0.3) -- (0.5,-0.3);
\node at (0.45,-0.3) [below] {$A_n^{(1)}$};
\draw[fill = black] (0,0) circle (0.35pt);
\node at (0.75,0) [right] {$v = v^{+}$};
\node at (0,0.6) [above] {$v^{-}$};
\draw[decorate,decoration={brace,amplitude=10pt}] (-0.5,0.25)--(0.5,0.25);
\node at (0,0.3) [above] {$U_n$};
\draw[decorate,decoration={brace,amplitude=10pt}] (-0.65,-0.15)--(-0.65,0.15);
\node at (-0.7,0) [left] {$T(U_n)$};
\draw[<->] (-0.5,-0.5)--(0.5,-0.5);
\node at (0,-0.5) [above] {$1$};
\draw[<->] (-0.4,-0.55)--(0.4,-0.55);
\node at (0,-0.55) [below] {$\frac{1}{\lambda}$};
\end{scope}
\end{tikzpicture}
\caption{\label{fig.anosov3} Sketch of argument (b1) for $v$ aligned with the unstable direction and $L$ contains a periodic orbit showing intersections of $A_n^{(1)}$ (shown in patterned lines) and $T(U_n)$ (shown in gray). Estimates of the ratio of $A_n^{(1)}$ to $U_n$ (shown in white) give the value of the extremal index.}
\end{figure}

\begin{figure}[h!]
\centering
\begin{tikzpicture}[scale=4]
\begin{scope}%[rotate = 10*pi]
%\draw[white] (-1,-1) rectangle (1,1);
\draw[dashed] (0,-0.6)--(0,0.6);
\draw (-0.6,0)--(0.75,0);
\draw (-0.4,-0.3) rectangle (0.3,0.3);
\draw[fill = gray, opacity = 0.3] (-0.3,-0.15) rectangle (0.65,0.15);
\draw[pattern = north west lines] (0.2,-0.3) rectangle (0.3,0.3);
\draw[fill = black] (-0.4,0) circle (0.35pt);
\draw[fill = black] (0.2,0) circle (0.35pt);
\draw[fill = black] (0.3,0) circle (0.35pt);
\draw[decorate,decoration={brace,amplitude=10pt}] (-0.4,0.3)--(0.3,0.3);
\draw[decorate,decoration={brace,amplitude=10pt}] (0.65,0.15)--(0.65,-0.15);
\node at (-0.05,0.35) [above] {$U_n$};
\node at (0.7,0) [right] {$T(U_n)$};
\node at (-0.6,0) [left] {$v = v^{+}$};
\node at (0,0.6) [above] {$v^{-}$};
\draw [|-|] (0.2,-0.35)--(0.3,-0.35);
\node at (0.25,-0.35) [below] {$A_n^{(1)}$};
\draw[<-] (-0.4,0)--(-0.45,0.2);
\node at (-0.45,0.2) [above] {$\hat{p}_1$};
\draw[<-] (0.2,0) -- (0,0.15);
\node at (0,0.15) [above] {$T^{-1}(\hat{p}_2)$};
\draw[<-] (0.3,0) -- (0.4,0.15);
\node at (0.4,0.15) [above] {$\hat{p}_2$};
\end{scope}
\end{tikzpicture}
\caption{\label{fig.anosov4} Sketch of argument (b2) for $v$ aligned with the unstable direction and $L$ does \textit{not} contain a periodic orbit showing intersections of $A_n^{(1)}$ (shown in patterned lines) and $T(U_n)$ (shown in gray). Estimates of the ratio of $A_n^{(1)}$ to $U_n$ (shown in white) give the value of the extremal index.}
\end{figure}
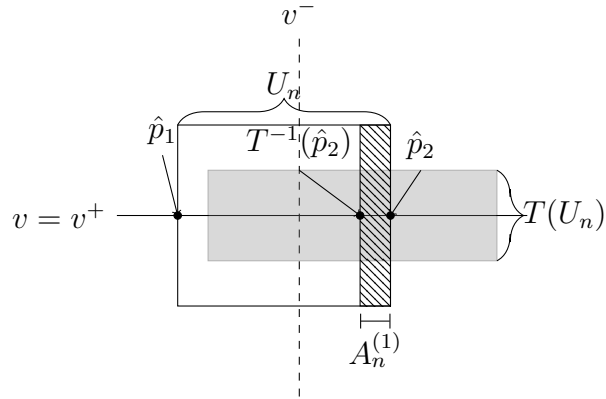
																																																													
																																																													            \noindent {\bf Case $L$ is aligned with the stable direction.}

Suppose now that $L$ aligns with the stable direction $v^{-}$. See Figure \ref{fig.anosov5}. The analysis is similar to the 
case where $L$ is aligned with the unstable direction, and again we consider the lift $\hat{L}=\hat{p}_1+ tv^-$, $t\in [0,l(L)]$, 
with $\hat{p}_1\in \R^2$, and $\hat{p}_2$ denoting the other endpoint of $\hat{L}$, i.e. $\hat{p}_2=\hat{p}_1+l(L)v^-$. We will make use of the time-reversibility of the system in Case (a) below. 

We have the following cases.

Case (a): First assume that the line $\hat{p}_1+t v^-$, $-\infty <  t<\infty$  contains no point with rational coordinates. Let $S=T^{-1}$. Then $L$
is aligned with the unstable direction for $S$.
As in the case where $L$ aligned with the unstable direction for $T$, it follows again that 
$S^n (L)$ has no intersections with $L$, for all $n\geq 1$. Hence $T^n(L)$ has no intersections with $L$ for all $n\geq 1$.

Thus all the iterates $T^j U_n$ are disjoint for small 
$j$, i.e. there exists $R_n \to \infty$ such that $\mu (T^{-j} U_n\cap U_n)=0$ for $j<R_n$. Note that the definition of $U_n$ is the same for $T$ and $S$ 
and that $\mu (T^{-j} U_n \cap U_n)= \mu ( U_n \cap T^j U_n)=
\mu (U_n\cap S^{-j} U_n)$
by measure-preservation. The argument of Case (a) when $L$ is aligned with the unstable direction shows that 
\[
n \sum_{j=R_n}^{(\log n)^5} \mu (S^{-j} U_n \cap U_n)= o(1),
\]
and hence 
\[
n \sum_{j=R_n}^{(\log n)^5} \mu (T^{-j} U_n \cap U_n)= o(1),
\]
Thus  $\theta=1$.

Case (b): Assume that $\hat{p}_1+t v^-$, $-\infty <t<\infty$ contains a point $\hat{p}_{per}$ with rational coordinates.  There will be only
one such point as the slope of $v^{-}$ is irrational.  The  point  $\hat{p}_{per}$ projects to a point $p_{per}$ periodic under $T$ with period $q$ say. We cannot
use time-reversibility in this case as the set $A_n^q$ depends upon the consideration of $T$ or $T^{-1}$ as the transformation. 

Case (b1):  Assume now that $L$ contains the periodic  point $p_{per}$ of period $q$.

Without loss of generality we (again) take $q=1$ by considering $T^q$.
We have $\theta=\lim_{n\to\infty}\frac{\mu(A^{(q)}_n)}{\mu(U_n)}=1-\frac{1}{\lambda^q}$. Geometrically $A_n^{(q)}$ consists of two
strips within $U_n$. Both of these have length $1/n$, (i.e. the same as $U_n$), but their width relative to $U_n$ is 
$\frac{1}{2}\left(1-1/\lambda^q\right)$. See Figure \ref{fig.anosov6}.
The same argument as in the case of no periodic orbits shows that there exists an $R_n\to \infty$ such that $T^{-j} A_n^{(q)} \cap A_n^{(q)}=\emptyset$ for $j<R_n$.

We have uniform expansion of $A_n^q$ in the unstable direction and the discrepancy argument of Case (b1)  of the previous section (alignment with the 
unstable direction) shows that 
\[
\sum_{j=R_n}^{(\log n)^5} \mu (T^{-j} A_n^{(q)} \cap A^{(q)}_n)= o\left(\frac{1}{n}\right).
\]
We therefore have $\theta=1-\frac{1}{\lambda^q}$.

Case (b2):  $L$ does not contain a  periodic orbit.

Again, we illustrate by considering the simplest case of  $\hat{p}_1$ parallel to $v^-$ so that $\hat{p}_1+t v^-$, $-\infty <t<\infty$ contains the fixed point $(0,0)$
but $L$ does not contain $(0,0)$. For the lifted line $\hat{p}_1+ t v^-$, $0\le t<\infty$, we use the natural ordering by distance 
from the origin $(0,0)$. If $\hat{p}_1 > \lambda^{-1}\hat{p}_2$,
then all iterates of $T^n L$ on the torus are disjoint and the arguments given in case (a) apply giving $\theta =1$. 

Suppose now $\hat{p}_1 < \lambda^{-1}\hat{p}_2$. We take $q=1$ and calculate 
$$\theta =\frac{\mu (A_n^{(1)})}{\mu (U_n)}
=\left(1-\frac{1}{\lambda}\cdot \frac{|\hat{p}_2-\lambda^{-1} \hat{p}_1|}{\hat{p}_2-\hat{p}_1}\right).$$ 
See Figure \ref{fig.anosov7}.
The general case where $\hat{p}_1$ is not parallel $v^-$ proceeds the 
same way by considering the  expansion of $T$ orthogonal to the  segment $[\hat{p}_1-\hat{p}_{per},\hat{p}_2-\hat{p}_{per}]$.  We  infer that for general $q\geq 1$, 
$$(1-\frac{1}{\lambda^q})\le \theta \le 1$$
with all values of $\theta$ in this range being realizable. 
 The verification of condition $\DD'_q(u_n)$ is similar to case (b1).

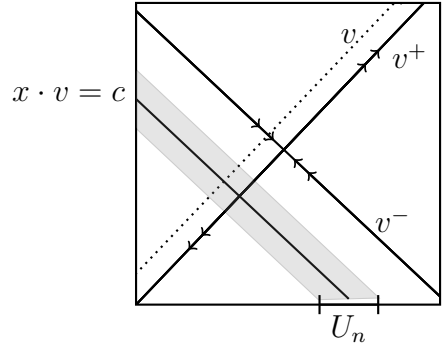
\begin{figure}[h!]
\centering
\begin{tikzpicture}[scale=4]
\begin{scope}
\draw[thick] (0,0) rectangle (1,1);
\clip (0,0) rectangle (1,1);
\draw[thick,rotate=pi/2] (0,0)--(1,1);
\draw[thick,rotate=pi/2] (1,0)--(0,1);
\draw[thick,black,->,shorten >=1cm,rotate=pi/2] (0,0)--(1,1);
\draw[thick,black,->,shorten >=1.25cm,rotate=pi/2] (0,0)--(1,1);
\draw[thick,black,<-,shorten <=1cm,rotate=pi/2] (0,0)--(1,1);
\draw[thick,black,<-,shorten <=1.25cm,rotate=pi/2] (0,0)--(1,1);
\draw[thick,black,<-,shorten <=3cm,rotate=pi/2] (1,0)--(0,1);
\draw[thick,black,<-,shorten <=3.25cm,rotate=pi/2] (1,0)--(0,1);
\draw[thick,black,->,shorten >=3cm,rotate=pi/2] (1,0)--(0,1);
\draw[thick,black,->,shorten >=3.25cm,rotate=pi/2] (1,0)--(0,1);
\node at (0.75,0.28) [right] {$v^{-}$};
\node at (0.9,0.9) [below] {$v^{+}$};
\draw[thick,->,dotted,rotate=pi/2] (0,0.1)--(0.9,1);
\draw[thick,rotate=pi/2] (0.7,0)--(0,0.7);
\node at (0.7,0.83) [above] {$v$};
\draw[fill = gray,opacity=0.2,rotate=pi/2] (0,0.6)--(0,0.8)--(0.8,0)--(0.6,0)--(0,0.6);
\end{scope}
\node at (0,0.7) [left] {$x\cdot v=c$};
\draw[thick,|-|] (0.6,0)--(0.8,0);
\node at (0.7,0) [below] {$U_n$};
\end{tikzpicture}
\caption{\label{fig.anosov5} The set $U_n$ and line $L$ for $L$ aligned with $v^{+}$.} 
\end{figure}

\begin{figure}[h!]
\centering
\begin{tikzpicture}[scale=4]
\begin{scope}%[rotate = 10*pi]
%\draw[white] (-1,-1) rectangle (1,1);
\draw[dashed] (0,-0.6)--(0,0.6);
\draw (-0.6,0)--(0.75,0);
\draw (-0.5,-0.25) rectangle (0.5,0.25);
\draw[fill = gray, opacity = 0.3] (-0.25,-0.5) rectangle (0.25,0.5);
\draw[pattern=north west lines]  (-0.5,-0.25) rectangle (0.5,-0.15);
\draw[pattern=north west lines] (-0.5,0.15) rectangle (0.5,0.25);
\draw[fill = black] (0,0) circle (0.35pt);
%\node at (0,0) [right] {$p$};
\node at (0.75,0) [right] {$v = v^{-}$};
\node at (0,0.6) [above] {$v^{+}$};
\draw[<->] (-0.55,0.25)--(-0.55,-0.25);
\node at (-0.55,-0.25) [below] {$1/n$};
\draw[<->] (-0.6,0.15)--(-0.6,-0.15);
\node at (-0.6,0) [left] {$\frac{1}{\lambda n}$};
\draw[|-|] (0.55,0.25) -- (0.55,0.15);
\node at (0.55,0.2) [right] {$A_n^{(1)}$};
\draw[|-|] (0.55,-0.25) -- (0.55,-0.15);
\node at (0.55,-0.2) [right] {$A_n^{(1)}$};
\draw[decorate,decoration={brace,amplitude=8pt},rotate=180] (-0.25,0.5)--(0.25,0.5);
\node at (0,-0.55) [below] {$T(U_n)$};
\draw[decorate,decoration={brace,amplitude=10pt}] (-0.5,0.25)--(0.5,0.25);
\node at (0,0.3) [above] {$U_n$};
\end{scope}
\end{tikzpicture}
\caption{\label{fig.anosov6} Sketch of argument (b1) for $v$ aligned with the stable direction and $L$ contains a periodic orbit showing intersections of $A_n^{(1)}$ (shown in patterned lines) and $T(U_n)$ (shown in gray). Estimates of the ratio of $A_n^{(1)}$ to $U_n$ (shown in white) give the value of the extremal index.}
\end{figure}

\begin{figure}[h!]
\centering
\begin{tikzpicture}[scale=4]
\begin{scope}%[rotate = 10*pi]
%\draw[white] (-1,-1) rectangle (1,1);
\draw[dashed] (0,-0.6)--(0,0.6);
\draw (-0.75,0)--(0.75,0);
\draw[fill = gray, opacity = 0.3] (-0.4,-0.3) rectangle (0.4,0.3);
\draw[fill=white] (0,-0.15) rectangle (0.6,0.15);
\draw[pattern = north west lines] (0,-0.15) rectangle (0.6,0.15);
\draw[<->] (0.65,0.15)--(0.65,-0.15);
\node at (0.65,0) [right] {$\frac{1}{\lambda n}$};
\node at (-0.75,0) [left] {$v = v^{-}$};
\node at (0,0.6) [above] {$v^{+}$};
\draw[fill = white] (0.3,-0.1) rectangle (0.6,0.1);
\draw[<-] (0.6,0) -- (0.6,0.45);
\node at (0.6,0.45) [above] {$\hat{p}_2$};
\draw[fill=black] (0,0) circle (0.35pt);
\draw[fill=black] (0.3,0) circle (0.35pt);
\draw[fill=black] (0.6,0) circle (0.35pt);
\draw[<-] (0.3,0) -- (0.4,-0.45);
\node at (0.4,-0.45) [below] {$T^{-1}(\hat{p}_2)=\lambda\hat{p}_2$};
\draw[<-] (0,0)--(-0.2,0.45);
\node at (-0.2,0.45) [above] {$\hat{p}_1$};
\draw[decorate,decoration={brace,amplitude=10pt}] (0,0.15)--(0.6,0.15);
\node at (0.3,0.2) [above] {$U_n$};
\draw[decorate,decoration={brace,amplitude=10pt}] (-0.4,0.3)--(0.4,0.3);
\node at (0,0.35) [above] {$T(U_n)$};
\end{scope}
\end{tikzpicture}
\caption{\label{fig.anosov7} Sketch of argument (b2) for $v$ aligned with the stable direction and $L$ does \textit{not} contain a periodic orbit. Showing intersections of $A_n^{(1)}$ (shown in patterned lines) and $T(U_n)$ (shown in gray). Estimates of the ratio of $A_n^{(1)}$ to $U_n$ (shown in white) give the value of the extremal index.}
\end{figure}
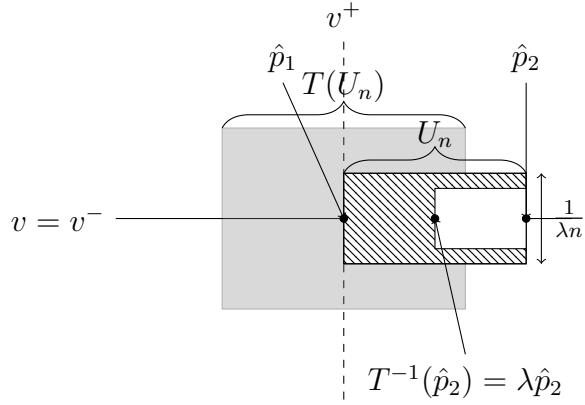

\subsection{Proof of Theorem~\ref{thm:billiards}.}\label{sec:billiards}

We will show that conditions $\DD_q(u_n)$ and $\DD'_q (u_n)$ hold with $q=0$ so that the extremal index $\theta=1$. We shall drop the subscript $q$ in this section. The proof of $\DD(u_n)$ follows the same strategy as in the Anosov case, the differences necessary in the planar dispersing billiard setting are addressed in~\cite[Theorem 2.1]{GHN}. To simplify the exposition we will consider the case $L=\{x: r=r_0\}$. The proof in the general case
of a $C^1$ curve is  similar.

\subsection{Checking condition $\DD^{'}(u_n)$}
 Before checking $\DD^{'}(u_n)$, we note that we need only to consider the sum up to time $(\log n)^{1+\delta}$, for $\delta>0$ since by the exponential decay of correlations of Lemma 3.3 (with $(X_0>u_n)$ equal to $A_n^q$ in this case), the remaining sum $$n\sum_{j= (\log n)^{1+\delta}}^{\lfloor k_n/n\rfloor} \mu (U_n \cap T^{-j} U_n)\to 0.$$
(Note here, we work with $A^{(0)}_n\equiv U_n$).

 The set  $\{r=r_0\}$ corresponds to a line (call it $L$) which is transverse to the discontinuity set  $S^+$ for $T$ and the discontinuity $S^{-}$  for $T^{-1}$. Let $U_n$ be the rectangle centered at $L$ with length $\pi$ and of width roughly $\frac{\tau}{\pi n}$ corresponding to the set $\{\phi > u_n\}$ so that $\mu(U_n) = \frac{\tau}{n}$.

\par\noindent\textbf{Short Returns.}

Let $S_n=\cup_{j=0}^{n-1}T^{-j} S^+$. The number of smooth connected components of $S_{n}$ is bounded above by $\kappa^n$ for some $\kappa>0$. Let $C=\frac{1}{4\log \kappa}$
and then the number of smooth connected components in $S_{[C\log n]}$ is bounded above by $n^{1/4}$.  Let $p_i=(r_0,\vartheta_i) \in L$ be the intersection points
$S_{[C\log n]}\cap L$, ordered from lowest $\vartheta$ value to highest and let $\alpha_i=\vartheta_{i+1}-\vartheta_i$. Let $B_1=\{ \alpha_i: \alpha_i < n^{-1/2} \}$.
We estimate $\sum_{\alpha_i \in B_1} \alpha_i
\le n^{1/4}n^{-1/2}=n^{-1/4}$. For each $\alpha_i$ we define the  rectangle $R_i=[r_0-\frac{1}{n}, r_0+\frac{1}{n}]\times \alpha_i$ and note that
$\mu (\alpha_i)=O(\frac{\alpha_i}{n})$. Let $B=\{ R_i : \alpha_i \in B_1\}$, then $\mu (B) \le n^{-1}n^{-1/4}=n^{-5/4}$ and so can be neglected. 
Let $G= \{ R_i  \in B^c\}$.  If $R_i \in G$ then 
$\mu (R_i)\ge n^{-3/2}$ and is of length $\ge n^{-1/2}$ in the $\vartheta$ direction and width $1/n$ in the $r$-direction. If $R_i \in G$ then $T^{[C\log n]}R_i $ is a connected
`rectangle' which has expanded in the unstable direction, contracted in the stable direction and may wind around  the phase space at most once.
 $T^{[C\log n]}R_i $ intersects  $U_n$ transversely (since $L$ is transverse to the unstable cone) in a connected component of measure 
$O(n^{-1/2} \mu (R_i))$. We estimate $\mu (U_n\cap T^{-j} (U_n))  \le \mu (R_i \in B) + \sum_{R_i \in B^c} \mu (U_n \cap T^{j} (R_i))\le C n^{-5/4} \mu (U_n) $. 

and conclude,
\[
\lim_{n\to\infty} n\sum_{j=1}^{C\log n}\mu(U_n\cap T^{-j} (U_n)) = 0.
\]

\begin{figure}
\centering
\begin{minipage}{0.49\textwidth}
\centering
\begin{tikzpicture}[scale=4]
\draw[thick] (0,0) rectangle (1,1);
\node at (0,0) [below] {$r$};
\node at (0,0) [left] {$\vartheta$};
\draw[fill=gray,opacity=0.2] (0.4,0) rectangle (0.6,0.15);
\draw[fill=gray,opacity=0.5] (0.4,0.15) rectangle (0.6,0.2);
\draw[fill=gray,opacity=0.2] (0.4,0.2) rectangle (0.6,0.35);
\draw[fill=gray,opacity=0.2] (0.4,0.35) rectangle (0.6,0.55);
\draw[pattern=north west lines] (0.4,0.35) rectangle (0.6,0.55);
\draw[fill=gray,opacity=0.5] (0.4,0.55) rectangle (0.6,0.60);
\draw[fill=gray,opacity=0.2] (0.4,0.60) rectangle (0.6,0.75);
\draw[fill=gray,opacity=0.2] (0.4,0.75) rectangle (0.6,0.85);
\draw[fill=gray,opacity=0.2] (0.4,0.85) rectangle (0.6, 1);
\draw[thick] (0.5,0)--(0.5,1);
\node at (0.5,0) [below] {$U_n$};
\draw [|-|,thick] (0.4,0)--(0.6,0);
\draw [|-|,thick] (0.4,0.35)--(0.4,0.55);
\node at (0.4, 0.45) [left] {$\alpha_i$};
\end{tikzpicture}
\caption*{(a)}
\end{minipage}
\begin{minipage}{0.49\textwidth}
\centering
\begin{tikzpicture}[scale=4]
\draw[thick] (0,0) rectangle (1,1);
\node at (0,0) [below] {$r$};
\node at (0,0) [left] {$\vartheta$};
\draw[fill=gray,opacity=0.2] (0.4,0) rectangle (0.6,0.15);
\draw[fill=gray,opacity=0.5] (0.4,0.15) rectangle (0.6,0.2);
\draw[fill=gray,opacity=0.2] (0.4,0.2) rectangle (0.6,0.35);
\draw[fill=gray,opacity=0.2] (0.4,0.35) rectangle (0.6,0.55);
\draw[pattern=north west lines,opacity=0.2] (0.4,0.35) rectangle (0.6,0.55);
\draw[fill=gray,opacity=0.5] (0.4,0.55) rectangle (0.6,0.60);
\draw[fill=gray,opacity=0.2] (0.4,0.60) rectangle (0.6,0.75);
\draw[fill=gray,opacity=0.2] (0.4,0.75) rectangle (0.6,0.85);
\draw[fill=gray,opacity=0.2] (0.4,0.85) rectangle (0.6, 1);
\draw[thick] (0.5,0)--(0.5,1);
\draw[pattern= north west lines,opacity=0.5] (0.25,0)--(0.75,1)--(0.85,1)--(0.35,0)--(0.25,0);
\node at (0.5,0) [below] {$U_n$};
\draw [|-|,thick] (0.4,0)--(0.6,0);
\draw [|-|,thick,xshift=-0.5,yshift=0.5] (0.25,0)--(0.75,1);
\draw[|-|,thick, xshift=-1, yshift=1] (0.4,0.3)--(0.6,0.7);
\end{tikzpicture}
\caption*{(b)}
\end{minipage}
\caption{(a) Intersection of points with $r=r_0$ that will not hit a extremal in $C\log n$ iterates. (b) Expansion of a single rectangle of side-length $\alpha_i$. Lines indicate portion that intersects $U_n$.}
\end{figure}

\textbf{Intermediate Returns.}

The proof of this section is similar to that for a  hyperbolic toral automorphism case but with additional complications due to the presence of discontinuities for $T$,  causing the unstable manifolds to fragment into small pieces. A scenario which needs to be ruled out is that a large number of small pieces of
fragmented  unstable manifolds may find themselves again in $U_n$. To overcome this we use the following property satisfied by the planar dispersing billiard map:
\par\indent\textbf{One-step expansion.} For $\alpha \in (0,1]$,
\[
\lim_{\delta\to 0}\inf\sup_{W:|W|<\delta}\sum_n \Big(\frac{|W|}{|V_n|}\Big)^{\alpha}\cdot\frac{|T^{-1}V_n|}{|W|}<1,
\]
where the supremum is taken over regular unstable curves $W\subset X$, $|W|$ denotes the length of $W$, and $V_n$, $n\ge 1$, the smooth components of $T(W)$, $\alpha\in (0,1]$. The class of regular curves includes our local unstable manifolds~\cite{CM}.

The expansion by $DT$ is unbounded  and this may lead to  different expansion rates at different points on $W^u (x)$. To overcome this
effect and obtain uniform estimates on the densities of conditional SRB measure it is common to define homogeneous local
unstable and local stable manifolds. This is the approach adopted in~\cite{BSC1,BSC2,CM07,Y98}. Fix a large $k_0$ and 
define for $k>k_0$
\[
I_k=\{(r,\vartheta): \frac{\pi}{2} -k^{-2} <\vartheta<\frac{\pi}{2}-(k+1)^{-2} \},
\]
\[
I_{-k}=\{(r,\vartheta): -\frac{\pi}{2} +(k+1)^{-2} <\vartheta<-\frac{\pi}{2}+k^{-2} \},
\]
and 
\[
I_{k_0}=\{(r,\vartheta): -\frac{\pi}{2} +k_0^{-2} <\vartheta<\frac{\pi}{2}- k_0^{-2} \}.
\]
In our setting we call a local unstable (stable) manifold $W^u (x)$, ($W^s (x)$) homogeneous if  for all $n\ge 0$
$T^n  W^u (x)$ 
($T^{-n}  W^s (x)$) does not intersect any of the line segments in $\cup_{k>k_0} (I_k\cup I_{-k})\cup I_{k_0}$. Homogeneous
$W^u (x)$ have almost constant  conditional SRB densities $\frac{d\mu_x}{dm_x}$ in the sense that 
there exists $C>0$ such that $\frac{1}{C} \le \frac{d\mu_x (z_1)}{dm_x} /\frac{d\mu_x (z_2)}{dm_x} \le C$ for all $z_1,~z_2 \in W^u (x)$ (see ~\cite[Section 2]{CM} and the remarks following Theorem 3.1).

From this point on all the local unstable (stable) manifolds that we consider will be homogeneous. 
We may as well suppose all such curves are contained in $R_i \in G$ as
$\mu (B)< n^{-5/4}$.

We now take care of the times $[C\log n] <j< (\log n)^{1+\delta}$. If $W^u (x) \cap U_n \subset R_i \in B^c$  then $T^{[C\log n]}$ has expanded $W^u (x)$ by a factor
$\Lambda^{C\log n}=n^{C\log \Lambda}=n^{\beta}$ for some $\beta >0$ and the iterates of the components of $W^u(x)\cap U_n$ 
have not  hit a extremal set in the first $[C\log n]$ iterates. Let $\gamma_n(x)=W^u (x)\cap U_n$. 
By~\cite[Theorem 5.7]{CM07}  $\mu ( W^u (x) < n^{-1-\beta/2}) < n^{-1-\beta/2}$ so we may require all $W^u (x) \in \cup_{R_i \in G} R_i$ to satisfy $|\gamma_n (x) | > n^{-1-\beta/2}$.

 Now we consider $\mu(U_n \cap T^{-j}(U_n ))$ for $C\log n\le j\le (\log n)^{1+\delta}$. Note that $T^j(\gamma_n (x) )$ consists 
of a connected curve  for $j\le C\log n$.  Recall by expansion under the map we have $|T^j \gamma_n (x)|\ge n^{\beta}|\gamma_n (x)|>n^{-1+\beta/2}$. If we iterate this component further such that
 $T^{i+j}\gamma_n(x)$, $i>0 $ intersects  a extremal line then we may decompose $T^{i+j}\gamma_n (x)$ into smooth connected components $V_n$ and their preimages $Y_n\subset T^j \gamma_n (x) $ so that $T^i$ maps $Y_n$ onto $V_n$ diffeomorphically and with uniformly bounded distortion. Applying one-step expansion for $p\in \gamma_n (x)$ gives,
\[
\sum_n \Big (\frac{|\gamma_n (x)|}{|V_n(p)|}\Big)^{\alpha}\Big|\frac{Y_n(p)}{\gamma_n (x)}\Big |<1.
\]
 Fix $T^j \gamma_n(x)$ and for every point $p\in T^j \gamma_n(x)$ let $d\mu_{\gamma}(p) = \frac{|Y_n(p)|}{|\gamma(x)|}$ be the density of a probability measure $\mu_{\gamma}(p)$ on $T^j \gamma_n(x)$ and $f(p) = \big(\frac{|\gamma_n (x)|}{|V_n(p)|}\big)^{\alpha}$ a function on this probability space. Now $\{p\in T^j \gamma_n (x) : |V_n(p)|<n^{-1+\varepsilon\beta/2} \} \subset\{p\in T^j \gamma_n (x):f(p)>n^{(1-\varepsilon) \beta/2\alpha}\}$ and by Markov's inequality $\mu_{\gamma}\{p \in T^j  \gamma_n (x): |V_n(p)|<n^{-(1+\varepsilon)\beta/2} \}\le n^{-(1-\varepsilon)\beta/2\alpha}$.

 We choose $\varepsilon$ sufficiently small so that $\rho_1 := 1-(1-\varepsilon)\beta/2\alpha>0$ (since $\beta<1$) and define $\rho = \min\{\rho_1,\beta/2\alpha\varepsilon\}$. With our choice of $\varepsilon$, if $|V_n|\ge n^{-1+\epsilon\beta/2}$ then,
\[
\frac{|V_n\cap U_n |}{|V_n|}\le C_1 n^{-\rho}.
\]
By bounded distortion of the map $T$, after throwing away the $V_n$ such that $|V_n|\le n^{-1+\varepsilon\beta/2}$ we have 
\[
\frac{|T^i \gamma_n (x) \cap U_n )|}{|\gamma_n (x) |}\le C_2 n^{-\rho}.
\]
and by bounded distortion again we have,
\[
\frac{|\gamma_n (x) \cap T^{-i}(U_n)|}{|\gamma_n (x) |}\le C_3 n^{-\rho}.
\]
\par This provides a bound on the length of the intersection of a single unstable manifold $\gamma_n (x) $.
We may now use the fact that $\mu$ decomposes as a product measure on $U_n$ so that if we consider all manifolds of $R_i\in G$ we have,
\[
\mu( U_n \cap T^{-j}(U_n))\le C_4 n^{-1-\rho}.
\]

Putting these results together implies,
\[
\lim_{n\to\infty} n \sum_{j=C\log n}^{(\log n)^{1+\delta}}\mu(U_n\cap T^{-j}(U_n)) = 0.
\]
Condition $\DD^{'}(u_n)$ follows.
\begin{remark}

Using essentially the same analysis it is standard to show that the return time statistics to $L=\{ (r,\vartheta): r=r_0\}$ is standard simple Poisson. To see this we need verify
condition $D^{*}_q (u_n)$ of ~\cite[Section 2]{CNZ}, but the proof of this is a minor modification of $\DD(u_n)$. In contrast suppose $(r_0,\vartheta_0)$ is a periodic
point of period $q$, then we would obtain a compound Poisson process as given in~\cite[Theorem 2]{CNZ}.

\end{remark}

%We note that  Young~\cite{Young} verifies her  assumptions (P1) through (P5) in the $p$-metric. 
%The proofs in Section \ref{sec:d'.sing} are worked keeping the  Euclidean metric in mind, but they adapt to the $p$-metric, as we show in %this subsection. 

\subsection{Proof of Theorem~\ref{thm:coupled} and Theorem~\ref{thm:block}.}

We give the proof in detail only for the case of two coupled maps, as the proofs in the other cases 
are the same with obvious modifications. The uniform expansion away from the invariant subspace  plays the same role in each setting. Note that the subspace $L$ of Theorem~\ref{thm:block} is invariant, and we will show that there is 
uniform expansion in the directions orthogonal to  $L$.

Recall $\phi (x,y)=-\log |x-y|$, a function maximized on the line segment or circle $L=\{(x,y):y=x\}$.  For $\tau>0$ 
define $u_n(\tau)$ by $n\mu (\phi >u_n (\tau))=\tau$, and $U_n=\{\phi >u_n (\tau)\}$. Define $A_n=\{\phi > u_n,
\phi\circ F <u_n\}$ and recall for a set $B$, $\mathscr{W}_{s,l}(B)=\bigcap_{i=s}^{s+l-1} F^{-i}(B)$.

Note that the invariant line $L$ is uniformly repelling in  the orthogonal direction $(1,-1)$ since writing $y-x=\epsilon$ we see $\epsilon \rightarrow (1-\gamma)[T(x+\epsilon)-T x]\sim (1-\gamma) DT(x) \epsilon+O(\epsilon^2)$ under the map $F$. 

Furthermore $A_n$ is a union of two rectangles and $A_n\cap F^{-2} A_n=\emptyset$ as a result of uniform expansion away from the invariant line $L$.
%, hence 

%\[
%\mu (\mathscr{W}_{0,2}(A_n))=\mu(\bigcap_{i=0}^{2} F^{-i}(A_n))=0,
%\]
Condition $\DD(u_n)$ follows easily by an approximation argument using exponential decay of correlations of 
Lipschitz versus $L^{\infty}$ functions taking  $t_n=(\log n)^5$ say.

Now we prove condition $\DD'_q(u_n)$ (for $q=1$), namely
\[
\lim_{n\rightarrow\infty}\,n\sum_{j=1}^{\lfloor n/k_n\rfloor}\mu\left( A_n\cap F^{-j}\left(A_n\right)
\right)=0.
\]
%We take $k_n=C_3 \log n$ for $C_3$ large enough to ensure that $\sum_{j=C_3 \log n}^{\infty} \frac{C_1}{n^2}e^{-C_2C_3\log n}=o(n)$.

Note that by uniform repulsion from the invariant line $L$ there exists $C_4$ such that
 for $j=1,\ldots, C_4 \log n$, $\mu ( A_n\cap F^{-j} A_n )=0$. This follows
 since $F^{-1} A_n \cap A_n=\emptyset$ (by definition) and uniform repulsion from the invariant line
 ensures also  $F^{-j} A_n \cap A_n=\emptyset$ for a certain number of iterates $j=1,\ldots, C_4 \log n$ until for all $(x,y)$ in $A_n$,
 $|F^j (x,y)|=O(1)$ (i.e. until the expansion in the $L^{\perp}$ direction is $O(n)$).

  As $DF$ is bounded and uniformly expanding, in all directions $A_n$ has been expanded by the map $F^{[C_4\log n]}$ by at least $n^{\alpha}$ for some $0<\alpha<1$. To see this, note that for any expanding map  expansion of $A_n$ by the map $F^{[C_4\log n]}$ given by at least $C_5 |DT|_{min}^{C_4\log n}\sim n^\alpha$.

\begin{figure}
\begin{tikzpicture}[scale=4]
\draw (0,0)--(0,1)--(1,1)--(1,0)--(0,0);
\draw[thick] (0,0)--(1,1);
\draw[fill=gray,opacity=0.2] (0,0.15)--(0,0.25)--(0.75 ,1)--(0.85,1);
\draw[fill=gray,opacity=0.2] (0,0.15)--(0,0.55)--(0.45 ,1)--(0.85,1);
\draw[fill=gray,opacity=0.2] (0.15,0)--(0.25,0)--(1,0.75)--(1,0.85);
\draw[fill=gray,opacity=0.2] (0.15,0)--(0.55,0)--(1,0.45)--(1,0.85);
\draw[opacity=0.2] (0,0.15)--(0,0.25)--(0.75 ,1)--(0.85,1);
\draw[opacity=0.2] (0.15,0)--(0.25,0)--(1,0.75)--(1,0.85);
\draw[dotted] (0,1)--(1,0);
\draw[->,>=stealth] (0.5,0.5)--(0.6,0.6);
\draw[->,>=stealth] (0.5,0.5)--(0.4,0.4);
\draw[->,>=stealth] (0.5,0.5)--(0.6,0.4);
\draw[->,>=stealth] (0.5,0.5)--(0.4,0.6);
\node at (1,0)[below]{$x$};
\node at (0,1)[left]{$y$};
\node at (0.75,1)[below]{\small$x=y$};
\draw[|-|] (1,0.85)--(1,0.75);
\node at (1,0.8) [right]{$O(\frac{1}{n})$};
\end{tikzpicture}
\caption{Expansion of the set $A_n$ (given in gray) under the map $F$. The thick, black line represents the line of maximization. Arrows indicate uniform expansion under $F$ in all directions.}
\end{figure}
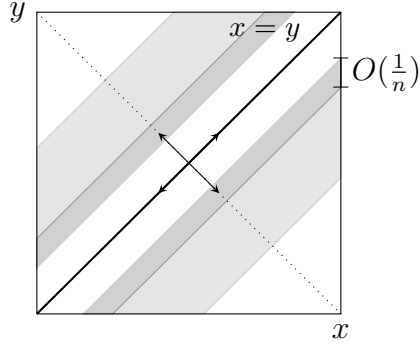

Choose $C_3\ge C_4$ large enough that $\mu ( A_n\cap F^{-j}(A_n ))\le \frac{1}{n^{3/2}}$, this is possible by exponential decay of correlations and a Lipschitz approximation to $1_{A_n}$.

Thus for $C_4 \log n \le j \le C_3 \log n$, $\mu ( A_n\cap F^{-j} (A_n) )\le \frac{1}{n^{1+\alpha}}$.  For $1\le j \le C_4 \log n$, $\mu (F^{-j} (A_n) \cap A_n)=0$ for  $C_4 \log n \le j \le C_3 \log n$, $\mu ( A_n\cap F^{-j} A_n )\le \frac{1}{n^{1+\alpha}}$ and for $j\ge C_4 \log n$, $\mu ( A_n\cap F^{-j} (A_n ))\le \frac{1}{n^{3/2}}$.

This implies $\DD'_q(u_n)$ for $q=1$ (corresponding to the fact that $L$ is fixed).

Finally we compute the extremal index, changing coordinates to $v=\frac{x-y}{\sqrt{2}}$, $u=\frac{x+y}{\sqrt{2}}$ we have

\[
\theta=\lim_{n\to\infty}\theta_n=\lim_{n\to\infty}\frac{\mu(A_n)}{\mu(U_n)}.
\]
However
\[
\lim_{n\to \infty}\frac{\mu(A_n)}{\mu(U_n)}
= \lim_{n\to \infty} [1- \int_{0}^{\frac{1}{n[Tv]}} \int_{0}^{\frac{1}{n}}\tilde{h}(u,v)dudv] / \int_{0}^{\frac{1}{n}} \int_{0}^{\frac{1}{n}}\tilde{h}(u,v)du dv. \]
Suppose $m(U_n)=O(\epsilon^{1/4})$, $\epsilon <\epsilon_0$.  Since $|\tilde{h}|_{\alpha}<\infty$, $m(x\in U_n : {\it osc} (\tilde{h}, B_{\epsilon} (x)) >\sqrt{\epsilon})<\sqrt{\epsilon}=O(m(U_n)^2)$.  We may assume that $\tilde{h}$ is regularized along the diagonal in the sense that for Lebesgue almost every $u$, $\tilde{h}(u,u)$ is the average of the limits of $\tilde{h}(u,v)$ and $\tilde{h}(u,-v)$ as $v\to 0$.
Thus, as expansion along $v$ at $v=0$ is given by $(1-\gamma)DT(u)$, and $\tilde{h}$ is essentially bounded
\[
\theta=1- \int_L \frac{\tilde{h}(u,u)}{(1-\gamma)|DT(u)|} du.
\]

%This is due to the fact that expansion along $v$ at $v=0$ is given by $(1-\gamma)DT(u)$ and $\tilde{h}$ is continuous. 

%In a similar way, our result generalizes to $N$-coupled maps equipped with the observable,
%\[
%\phi(\bar{x}) = -\log d(\bar{x},L)
%\]
 %where $\bar{x}=x_1,\dots,x_N$, $L$ is the line $x_1=\dots=x_N$ and $d$ is the euclidean distance. Expansion along the $v$ direction is given by $(1-\gamma)^{N-1}|DT|^{N-1}$ so that,
%\[
%\theta(n,\gamma) = 1-\int \frac{h(u)}{(1-\gamma)^{N-1}}\frac{1}{|DT(u)|^{N-1}} du. \tag{$\ast$}
%\]

\begin{remark}
Our techniques allow us to obtain similar results to that of ~\cite{sandro_coupled} in a simpler setting through a pure probabilistic approach and extend these results to blocks of synchronization discussion in ~\cite[Section 7.2]{sandro_coupled}.
\end{remark}
%\begin{figure}[h!]
%\begin{tikzpicture}[scale=4]
%\draw[->,thick,dashed] (0.25,0.25)--(0.9,0.9);
%\draw[->,thick] (0.25,0.25)--(0.5,0.5);
%\draw[->,thick] (0.75,0.9)--(0.75+0.08,0.75+0.08);
%\filldraw[black] (0.75,0.9) circle (0.3pt) node[anchor=east] {$(x_1,\dots,x_N)$};
%\filldraw[black] (0.75+0.08,0.75+0.08) circle (0.3pt) node[anchor=west,yshift=-.1cm] {$(t,\dots,t)$};
%\filldraw[black] (0.25,0.25) circle (0.3pt) node[anchor=north] {$(0,\dots,0)$};
%%\node at (0.375,0.375)[right] {$v_1=(1,\dots,1)$};
%\draw[decorate,decoration={brace,amplitude=5pt,raise=4pt,mirror},yshift=0pt] (0.25,0.25)--(0.5,0.5) node [black,midway,xshift=1.6cm,yshift=-0.25cm] {$v_1=(1,\dots,1)$};
%\draw[decorate,decoration={brace,amplitude=5pt,raise=4pt},yshift=0pt] (0.75,0.9)--(0.75+0.08,0.75+0.08) node [black,midway,xshift=2.25cm,yshift=0.5cm] {$v_2=(t-x_1,\dots,t-x_n)$};
%\end{tikzpicture}
%\caption{Generalizing the observable to $N$-dimensional coupled map $F$. The distance is given by the magnitude of the vector $v_2$.}
%\end{figure}

\subsection{Numerical Results for the Extremal Index}\label{sec.numerics}
In this section we provide numerical estimates for the extremal index to support the theoretical results for the coupled uniformly expanding map and the hyperbolic toral automorphism provided in Theorems \ref{thm:coupled} and \ref{thm:block} and Theorem \ref{thm.anosov}, respectively. We begin by verifying that the numerical estimates we obtain from the coupled systems agree with that of \cite{sandro_coupled}. Then, we extend these results to include estimates for the extremal index over blocks of synchronization where each block introduces a new invariant direction and changes the value of the extremal index. We end with a numerical investigation for Arnold's cat map as an example of a hyperbolic toral automorphism where the alignment of the singlarity set taken as a line $L$ in the space and existence of periodic orbits along $L$ determine the value of the extremal index.
\subsection{Coupled systems of uniformly expanding maps}
Numerical barriers in computing trajectories in piecewise uniformly expanding maps are given by the fact that
\begin{itemize}
\item[(i)]The periodic orbits are dense making long trajectories not easily computable.
\item [(ii)]Round off errors may produce unreliable results.
\end{itemize}
To overcome (i) we employ a numerical technique adapted from \cite{V.et.al} to prevent trapping of the orbit near the fixed point by adding a small $\varepsilon=O(10^{-2})$ amount to the trajectory. Arguments for this technique are typically given in the form of a shadowing lemma which states the existence of a true orbit that is $\epsilon$-close to the computed orbit; we will support this argument through a more numerical approach. We first note that \cite{FFLTV} proves the existence of an EVL for randomly perturbed piecewise expanding maps provided this perturbation $\varepsilon>10^{-4}$. Futher, \cite{sandro_coupled} provides evidence that the extremal index is qualitatively robust under small $\varepsilon=10^{-2}$ additive noise. To overcome (ii), in light of our long trajectories ($t=10^6$), we refer to \cite{FMT} where the round off error resulting from double precision computation was shown to be equivalent to the addition of random noise of order $10^{-7}$.

\subsubsection*{Estimating the EI for the coupled map system over the whole extremal set.}
We estimate the extremal index in a similar way to that of~\cite{sandro_coupled} for $\phi(\bar{x}) = -\log(|| p^{\perp}||)$ using the formula provided by S\"{u}veges~\cite{suveges}. The code for this estimate can be found in~\cite{V.et.al}. From Theorem~\ref{thm:coupled} we expect,
\[
\theta = 1-\frac{1}{(1-\gamma)^{m-1}}\frac{1}{|DT|^{m-1}}.
\]
We compute the extremal index for fixed $m=2$ and varying values of $\gamma$, and varying values of both $\gamma$ and $m$. Our results coincide with that of \cite{sandro_coupled}; higher values of $m$ and lower values of $\gamma$ produce an extremal index near 1. Low values of $\gamma$ give higher weights to the non-coupled components of the map resulting in a system which behaves more independently. Lower values of $m$ result in a more dependent system since the coupled term is more affected by changes while larger values of $m$ result in a coupled term which is averaged over a larger number of maps and less affected by individual changes. For results see Figure~\ref{fig:ei}.

\begin{figure}[h!]
\begin{minipage}{0.49\textwidth}
\includegraphics[width=\textwidth]{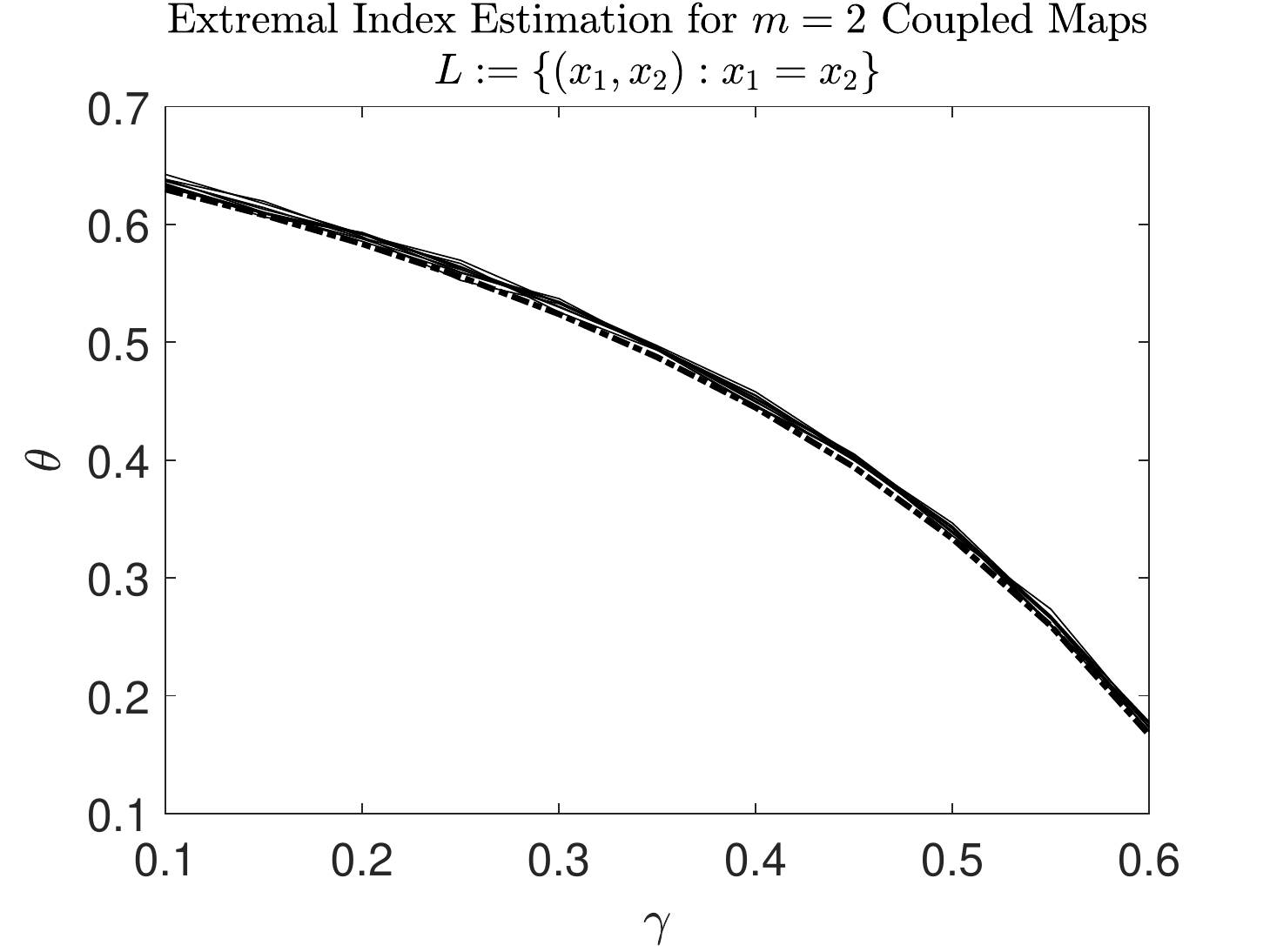}
\caption*{(a)}
\end{minipage}
\begin{minipage}{0.49\textwidth}
\includegraphics[width=\textwidth]{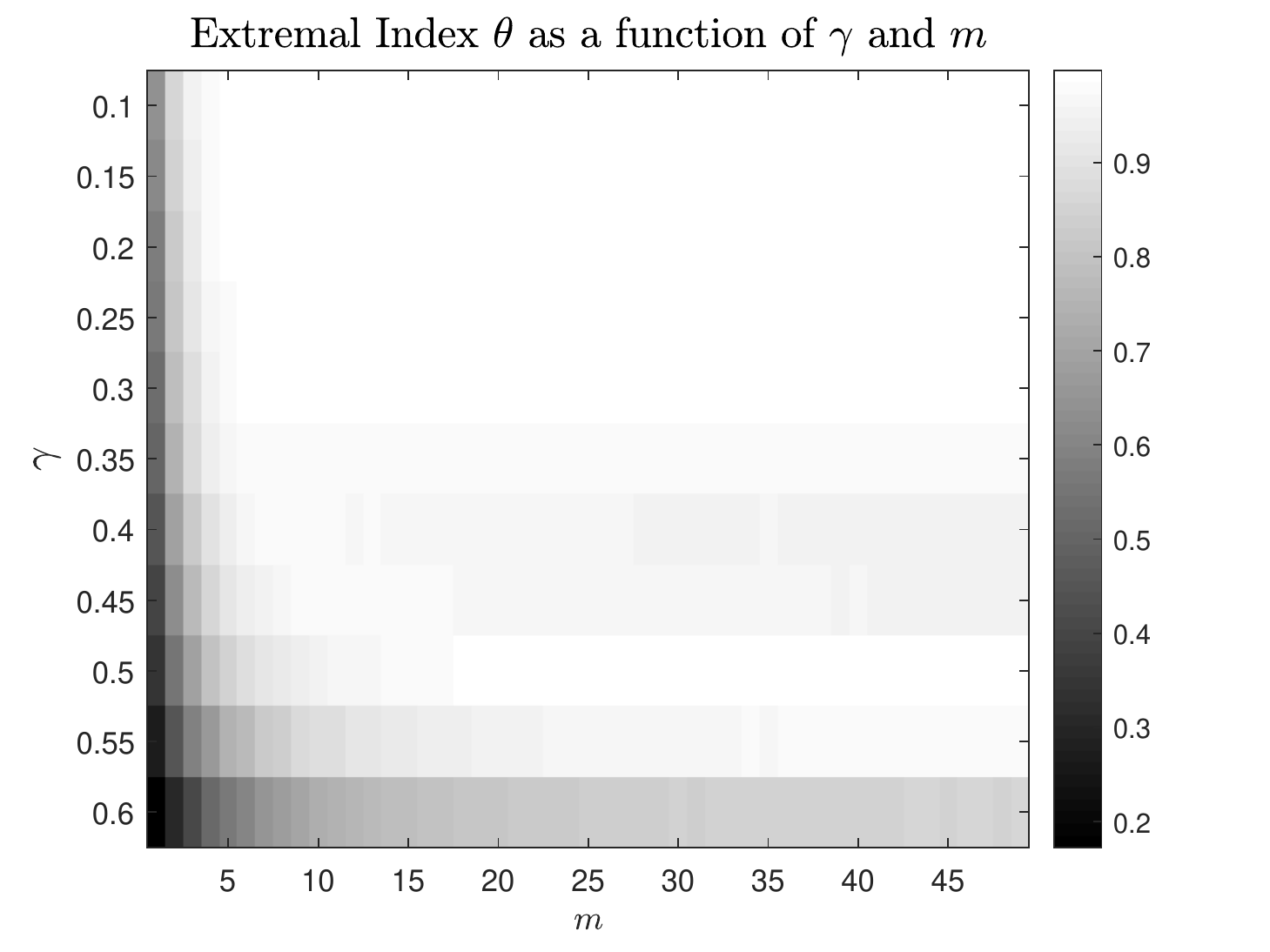}
\caption*{(b)}
\end{minipage}
\caption{\label{fig:ei} Extremal index $\theta$ estimation for the $m$-coupled map $F$ with $\phi(x) = -\log (||p^{\perp}||)$ where the set of maximization $L=\{(x_1,x_2,\dots,x_m):x_1=x_2=\dots=x_{m}\}$ for (a) fixed $m$ and varying $\gamma$ (10 different realizations $t = 10^6$) and (b) varying $m$ and $\gamma$. The marked line indicates the theoretical value of $\theta$ given.}
\end{figure}

\subsubsection*{Estimating the EI for the coupled map system over blocks of synchronization.}
\par We provide numerical estimates of the extremal index in a more specific setting of block synchronization where $L=\{(x_1,x_2,\dots,x_m):x_1=x_2=\dots=x_{m}\}$ and $L=\{(x_1,x_2,\dots,x_m):x_1=x_2=\dots=x_{m-1},x_m\}$. From Theorem~\ref{thm:block} we expect,
\[
\theta=1-\frac{1}{(1-\gamma)^{m-1}}\frac{1}{|DT|^{m-1}}.
\]
for $L=\{(x_1,x_2,\dots,x_m):x_1=x_2=\dots=x_{m}\}$ and,
\[
\theta=1-\frac{1}{(1-\gamma)^{m-2}}\frac{1}{|DT|^{m-2}}
\]
for $L=\{(x_1,x_2,\dots,x_m):x_1=x_2=\dots=x_{m-1},x_m\}$. Defining $L$ in this way reduces the spacial dimension in which expansion away from $L$ can occur. This results in a extremal index equivalent to that of an $m-1$ coupled system. We give results in the case when $m=5$ (see Figure~\ref{fig:bs2}) . %If we consider, as an example, $L=\{(x_1,x_2,x_3):x_1=x_2=x_3\}$ then $L$ can be seen as a line in three-dimensional space where expansion away from this line can be considered in three-dimensions. If we redefine $L=\{(x_1,x_2,x_3): x_1=x_2, x_3\}$ then $L$ can be seen as a plane in three-dimensional space where expansion away from the plane can only be considered in two-dimensions, hence our extremal index (and dependent structure) should behave like that of an $m=2$ coupled system.
\begin{figure}[h!]
\begin{minipage}{0.49\textwidth}
\includegraphics[width=\textwidth]{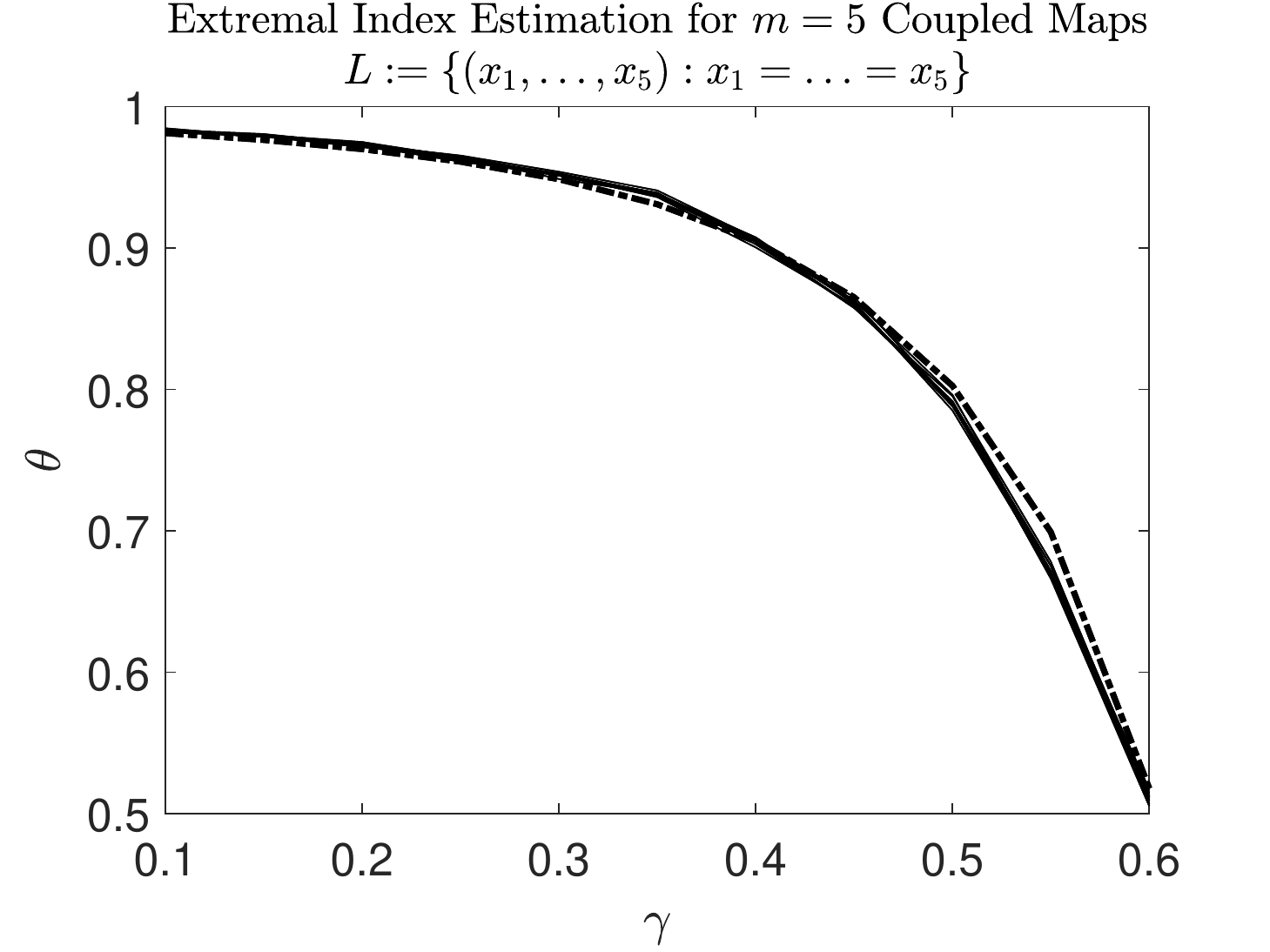}
\caption*{(a)}
\end{minipage}
\begin{minipage}{0.49\textwidth}
\includegraphics[width=\textwidth]{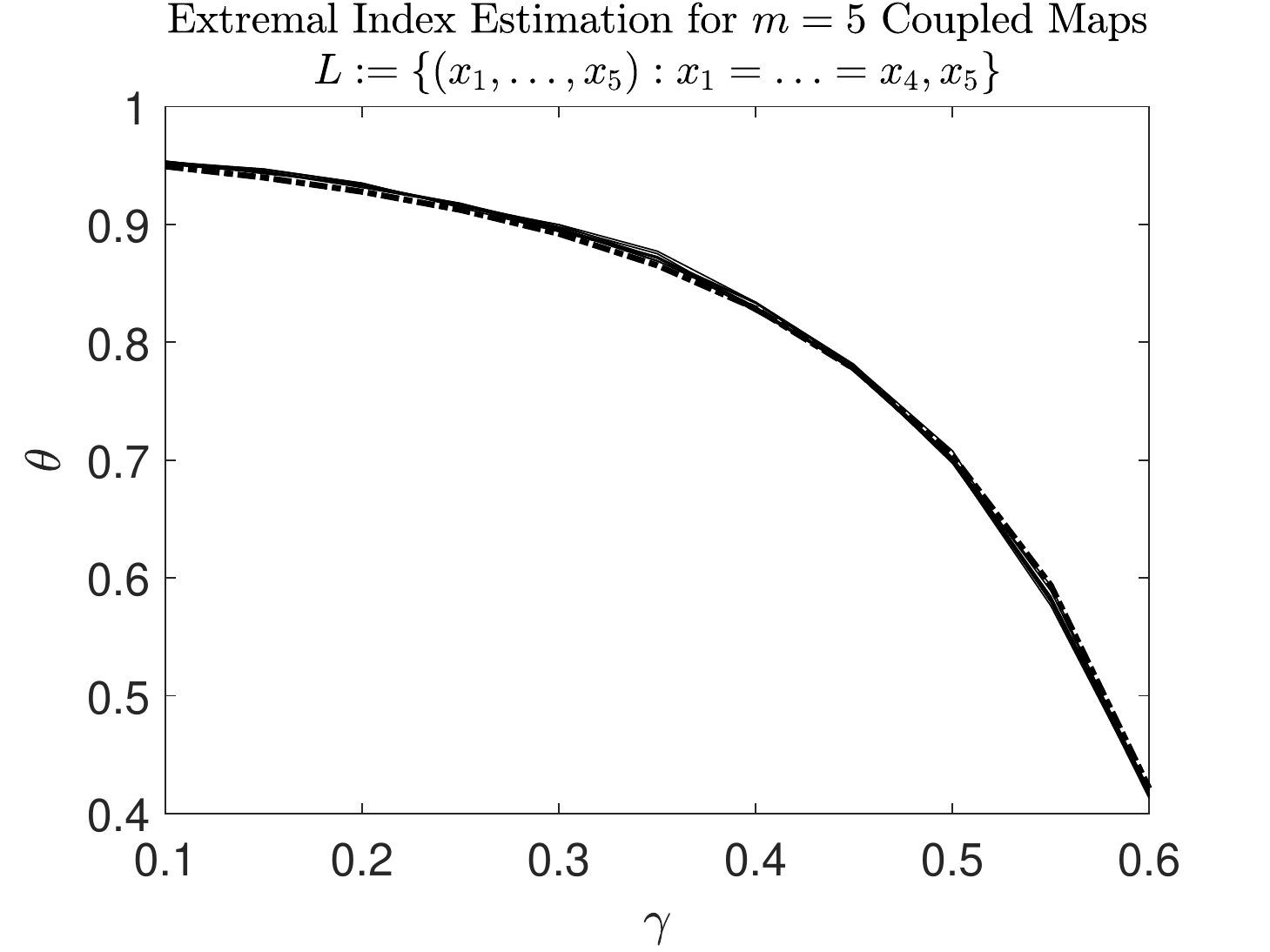}
\caption*{(b)}
\end{minipage}
\caption{\label{fig:bs2} Extremal index $\theta$ estimation (10 different realizations, $t=10^6$) for the $m$-coupled map $F$ with $\phi(x) = -\log(||p^{\perp}||)$ where (a) $L$ is the line $x_1=x_2=\dots=x_m$ and (b) $L$ is the plane $x_1=x_2=\dots=x_{m-1},x_m$. The marked line indicates the theoretical value of $\theta$ given.}
\end{figure}

We also consider blocks of successive indices in the general setting of block synchronization so that $L$ can be defined as any combination of block sequences. From Theorem~\ref{thm:block} we expect the value of the extremal index to be determined by the spacial dimension of expansion for the system. In the following numerical examples we consider $m=5$ and note that the extremal index for that of $L=\{(x_1,\dots,x_5):x_1=x_2=x_3=x_4,x_5\}$ is equivalent to that of $L = \{(x_1,\dots,x_5):x_1=x_2=x_3,x_4=x_5\}$. This is expected since they share the same number of non-invariant directions of expansion.

\begin{figure}[h!]
\begin{minipage}{0.49\textwidth}
\includegraphics[width=\textwidth]{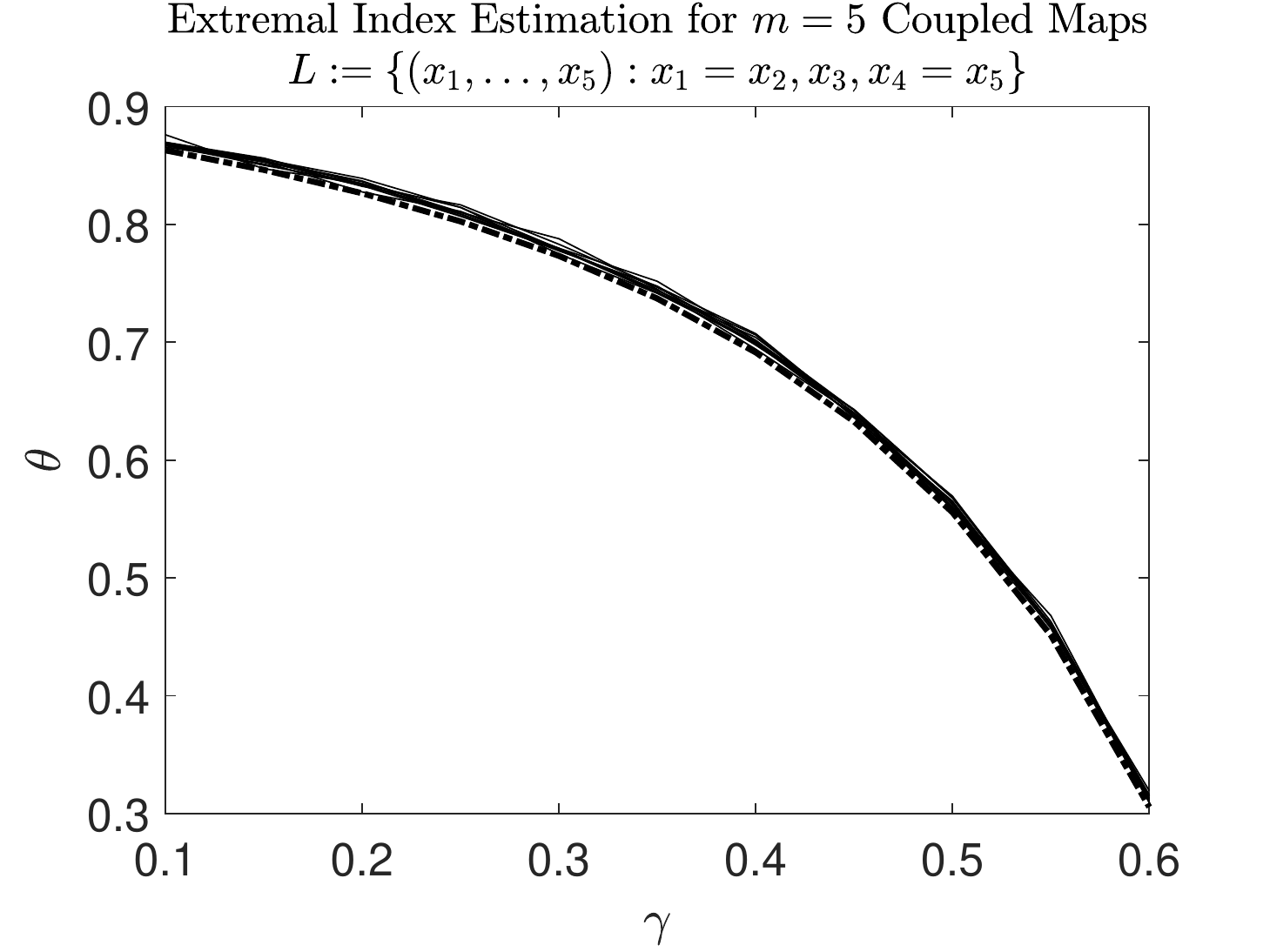}
\caption*{(a)}
\end{minipage}
\begin{minipage}{0.49\textwidth}
\includegraphics[width=\textwidth]{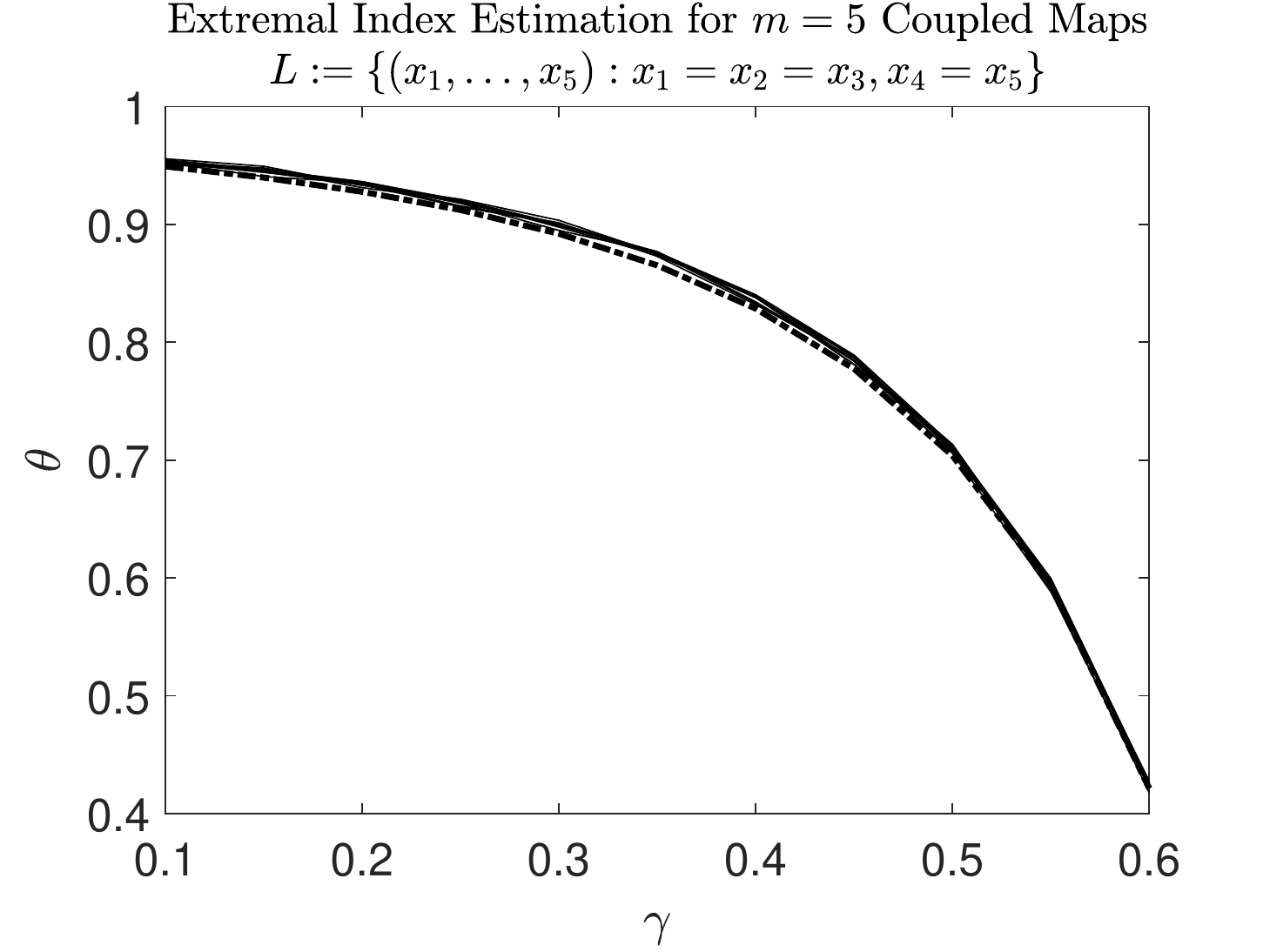}
\caption*{(b)}
\end{minipage}
\caption{\label{fig:bs} Extremal index $\theta$ estimation (10 different realizations, $t=10^6$) for the $m$-coupled map $F$ with $\phi(x) = -\log(||p^{\perp}||)$ where (a) $L$ is the set of two planes $x_1=x_2$ and $x_4=x_5$ so that $\theta = 1-\frac{1}{(1-\gamma)^{2}|DT|^2}$ (b) $L$ is the set of planes $x_1=x_2=x_3$ and $x_4=x_5$, $\theta = 1-\frac{1}{(1-\gamma)^{3}|DT|^3}$. The marked line indicates the theoretical value of $\theta$ given.}
\end{figure}

\subsection{Hyperbolic toral automorphisms.}
We compute trajectories for increasing time intervals of Arnold's cat map given by,
\[
T\begin{pmatrix} x_1\\ x_2 \end{pmatrix} = \begin{pmatrix}2 & 1\\ 1 & 1\end{pmatrix}\quad\begin{pmatrix} x_1\\ x_2 \end{pmatrix} \mod 1.
\]
The uniformly hyperbolic structure of this map allows us to calculate long trajectories without the risk of points being trapped in a few time steps. The stability of this map ensures that the qualitative behavior is unaffected by small perturbations. We use this to argue the accuracy of the calculated orbit up to $t=10^4$ under double precision. 

From Theorem~\ref{thm.anosov} we expect the value of the extremal index $\theta$ to depend on both the alignment of $v$ in the observable $\phi(x) = -\log d(x,L)$, with $d$ the usual Euclidean metric, and the existence of a periodic orbit along $L$. Figure~\ref{fig:cm1} (a) shows the extremal index estimation given by~\cite{suveges} for 10 different initial values where $v$ aligns with the unstable direction and contains a 2-periodic point. Hence, $\theta = 1-\frac{1}{\lambda^2}$. Figure~\ref{fig:cm1}(b) shows the extremal index estimation for 10 different initial values where $v$ is not aligned with the stable or unstable direction. In this setting we expect $\theta=1$. The variation from the expected value for each realization is at most $O(10^{-2})$.

\begin{figure}[h!]
\begin{minipage}{0.49\textwidth}
\includegraphics[width=\textwidth]{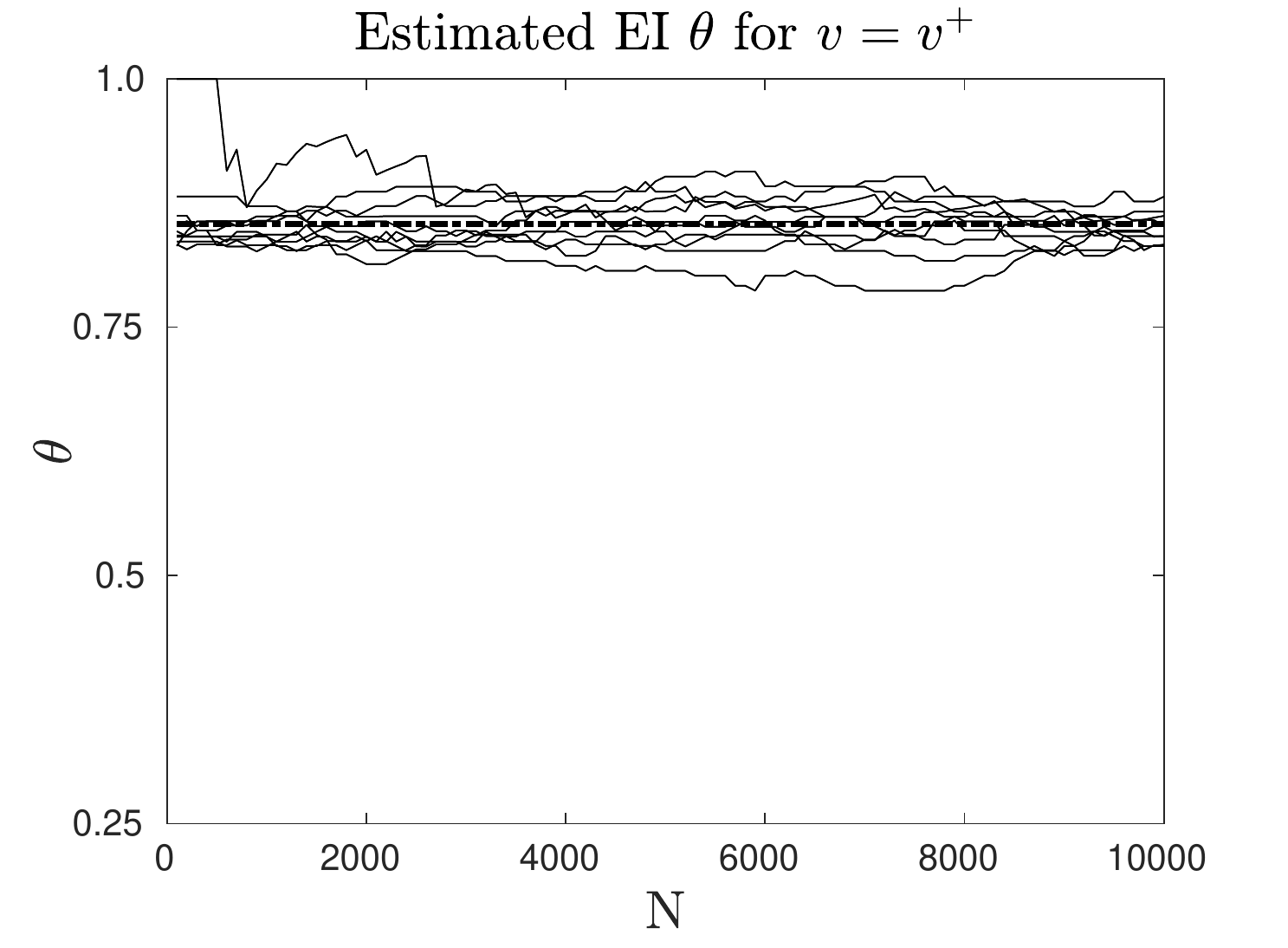}
\caption*{(a)}
\end{minipage}
\begin{minipage}{0.49\textwidth}
\includegraphics[width=\textwidth]{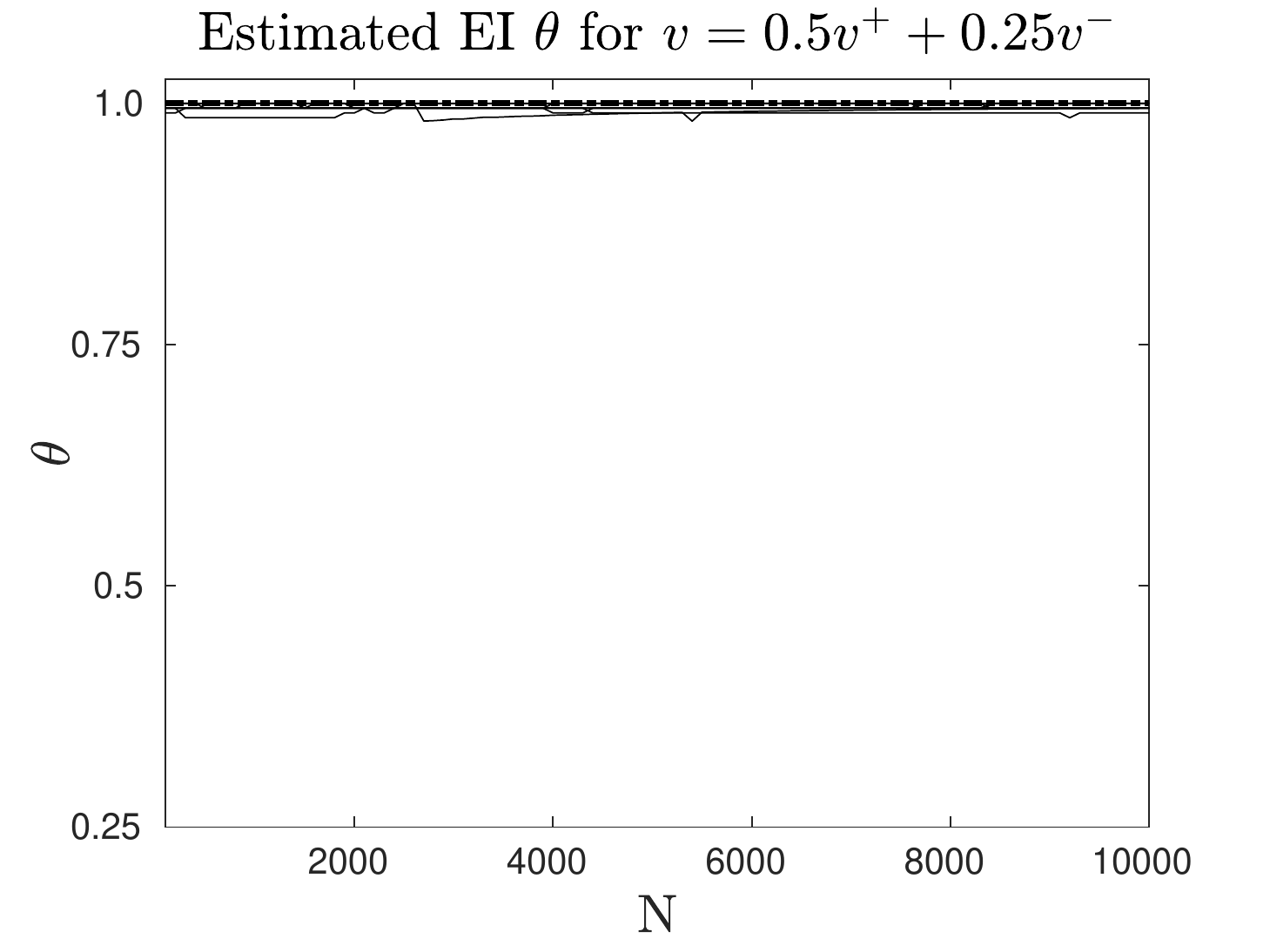}
\caption*{(b)}
\end{minipage}
\caption{\label{fig:cm1} Extremal index $\theta$ estimation (10 different realizations, $t=10^4$) for Arnold's cat map with $\varphi(x) = -\log d(x,L)$ where (a) $L=v^+$ and (b) $L=0.5v^++0.25v^-$. The marked line indicates the theoretical value of $\theta$ given.}
\end{figure}

\section{Discussion: towards more general observables and non-uniformly hyperbolic systems}\label{sec.discussion}
In this article we have focused on hyperbolic systems and considered observables whose level sets $\mathcal{S}_{\epsilon}$
shrink to a non-trivial extremal set $\mathcal{S}$, such as a line segment. We recall that
$\mathcal{S}_{\epsilon}=\{x\in X:\,d_H(x,\mathcal{S})\leq\epsilon\}$, and $d_H(x, \mathcal{S})$ is the Hausdorff distance
from $x$ to $\mathcal{S}$. Thus if $\mathcal{S}$ is a smooth curve, then for this metric $d_H$ we see that 
$\mathcal{S}_{\epsilon}$ is a thin tube of width $\epsilon$ around $\mathcal{S}$.
The observable $\phi:X\to\mathbb{R}$ we have assumed to be given by $\phi(x)=f(d_{H}(x,\mathcal{S}))$, for
some smooth function $f:[0,\infty)\to\mathbb{R}$, maximised at 0, e.g. $f(u)=-\log u$. 

As explained in Section \ref{sec.statement}, our methods extend to cases where $\mathcal{S}$ is a smooth 
curve, assuming some transversality conditions of $\mathcal{S}$ relative to the global stable/unstable manifolds of the system.
We have also considered seemingly non-generic geometrical cases, e.g. where $\mathcal{S}$ 
aligns precisely with the global stable/unstable manifolds. For hyperbolic toral automorphisms, we established the limit laws that arise in these
scenarios. More generally, it is natural to consider observables whose extremal set $\mathcal{S}$ is no longer (strictly) transverse
to the global stable/unstable manifolds, i.e. there exist points of tangency between $\mathcal{S}$ and the global manifolds. 
%We 
%describe an example in Sections \ref{sec.ex1}, thus illustrating how our theory can extend to more general geometries for $\mathcal{S}$.

For the systems we have considered, the ergodic invariant measures are  absolutely continuous with
respect to the ambient (two dimensional Lebesgue) measure. For (non-uniformly) hyperbolic systems $(f,\Lambda,\mu)$ where $\Lambda$ is an
attractor the  Sinai-Ruelle-Bowen (SRB) measure $\mu$ may not be equivalent to Lebesgue. These systems include Axiom A systems, or H\'enon-like attractors
whose statistical properties (such as mixing rates) are established in \cite{Y98}. As outlined in 
Section \ref{sec.background}, there is an established literature on extreme value theory in the non-uniformly hyperbolic setting   for observables whose extremal set $\mathcal{S}$ is a point. Recently some progress has been made on more complicated geometries for $\mathcal{S}$~\cite{Haydn_Vaienti} but in a very axiomatic way.
In the case where $\mathcal{S}$ is a  line (or in higher dimensions a planar set), then we expect $\mathcal{S}$ 
to (generically) intersect a fractal attractor $\Lambda$ in a Cantor-like set. For such a set, there are various difficulties that arise
when trying to find the limit extreme value distribution distribution, in the sense of establishing \eqref{eq.ev-law}, or in particular
the limit law given by \eqref{eq.gevlimit}.
If we suspect that a limit law of the form given in equation \eqref{eq.ev-law} is going to exist, then finding the scaling
sequence $u_n$ is a first problem. For a specified observable $\phi$ (i.e. through specifying $f$), the properties of the sequence $u_n$ depend on the asymptotic properties of $\mu(\mathcal{S}_{\epsilon})$ as $\epsilon\to 0$. To estimate this measure, we cannot use local dimension estimates, and finer arguments are required based on the geometric properties of $\mu$. Furthermore, existence of a GEV limit
of the form \eqref{eq.gevlimit} is not guaranteed, as this requires $\mu(\mathcal{S}_{\epsilon})$ to satisfy conditions of regular variation in $\epsilon$ (as $\epsilon\to 0$), see \cite[Chapter 3]{V.et.al}. Axiomatic approaches, e.g. \cite{CC, Haydn_Vaienti, HRS} suggest that once we've found these 
scaling laws then an extreme value law holds in the sense of equation \eqref{eq.ev-law}. However, verification of these axioms 
still requires fine analysis. This includes verification of axiomatic conditions involving transversality of $\mathcal{S}$ with $\Lambda$,
and conditions involving how $\mu$ behaves on certain shrinking sets (such as thin annuli) on a case-by-case basis. 

%As we highlight in our examples below
%it is feasible for transversality between $\mathcal{S}$ and $\Lambda$ to fail in generic (rather than pathological) observable examples.
%Even if $\mathcal{S}$ has a simple geometry (e.g. straight line), the attractor $\Lambda$ might contain many folds, e.g. as is
%present in the H\'enon attractor.

\clearpage


\begin{thebibliography}{99}

%\bibitem{AFFR}  D. Azevedo, A. Freitas, J. Freitas, and F.B. Rodrigues. \emph{Extreme Value Laws for dynamical systems with countable extremal sets}. J. Stat. Phys., {\bf 167}, (5), 2017, 1244-1261.

\bibitem{Bowen} R. Bowen. Equilibrium states and the ergodic theory of Anosov diffeomorphisms. Springer-Verlag, Berlin. Lecture Notes in Mathematics, Vol. 470. (1975)

\bibitem{BSC1} L.A. Bunimovich, Ya. Sinai, and N. Chernov.
\emph{Markov partitions for two-dimensional billiards}.
Russ. Math. Surv. \textbf{45}, (1990), 105--152.

\bibitem{BSC2} L.A. Bunimovich, Ya. Sinai, and N. Chernov,
\emph{Statistical properties of two-dimensional hyperbolic billiards}.
Russ. Math. Surv. \textbf{46}, (1991), 47--106.

\bibitem{CNZ} M. Carney, M. Nicol and H.~K. Zhang. \emph{Compound Poisson law for hitting times to periodic orbits in two-dimensional hyperbolic systems}.   J. Stat. Phys. 169, (4), (2017), 804-823.

\bibitem{CC} J.-R. Chazottes and P. Collet. \emph{Poisson approximation for the number of visits to balls in non-uniformly hyperbolic
dynamical systems}.  Ergodic Theory and Dynamical Systems.
{\bf 33}, (2013), 49-80.

\bibitem{CM} N. Chernov and R. Markarian. \emph{Chaotic
    billiards}. Math. Surv. Monographs, \textbf{127}, AMS,
    Providence, RI, (2006), 316 pp.

\bibitem{CM07} N. Chernov and R. Markarian. \emph{Dispersing billiards
    with cusps: slow decay of correlations}. Commun. Math. Phys.,
    \textbf{270}, (2007), 727--758.

 
\bibitem{Collet} P. Collet. \emph{Statistics of closest return for some
  non-uniformly  hyperbolic  systems}. Ergodic  Theory and  Dynamical  Systems. \textbf{21},
  (2001), 401-420.

\bibitem{Coles} S. Coles. \emph{An Introduction to Statistical Modeling of Extreme Values}. Springer, 2004.

\bibitem{Dichotomy} M. Carvalho, A. C. M. Freitas, J. M. Freitas, M. Holland and M. Nicol.
\emph{Extremal dichotomy for hyperbolic toral automorphisms}.  Dynamical Systems: An International Journal, {\bf 30}, (4), (2015), 383-403.

   \bibitem{DGS} M. Denker, M. Gordin, and A. Sharova.  \emph{A Poisson limit theorem for toral automorphisms}. Illinois J. Math,  \textbf{48}, (1), (2004), 1-20.

\bibitem{Embrechts} P. Embrechts, C. Kl\"{u}pperlberg, and T. Mikosch.
  \emph{Modelling extremal events for insurance and finance}.
  Applications of Mathematics (New York), {\bf 33}, Springer-Verlag, Berlin,
  1997.

  \bibitem{sandro_coupled}  D. Faranda, H. Ghoudi, P. Guirard and  S. Vaienti. \emph{Extreme value  theory for synchronization of coupled map lattices}, Nonlinearity,  {\bf 31}, (7), (2018), 3326-3358 


\bibitem{FFLTV}  D. Faranda, J. Freitas, V. Lucarini, G. Turchetti and  S. Vaienti. \emph{Extreme value statistics for dynamical systems with noise}. Nonlinearity,  {\bf 26}, (2013), 2597.


\bibitem{FMT} D. Faranda, M. Mestre, and G. Turchetti. \emph{Analysis of round off errors with reversibility test as a dynamical indicator}. International Journal of Bifurcation and Chaos,  {\bf 22}, (9), (2012), 1250215.

\bibitem{Ferguson_Pollicott}  A. Ferguson and M. Pollicott. \emph{Escape Rates for Gibbs measures}.  Ergodic Theory and Dynamical Systems, {\bf 32}, (3), (2012), 961-988.

\bibitem{FnF}
A. C. M. Freitas and J. M. Freitas. 
On the link between dependence and independence in extreme value theory for 
dynamical systems, {\it Stat. Probab. Lett.}, {\bf 78}, (2008), 1088-1093.


\bibitem{FFRS} A. Freitas, J. Freitas, F. Rodrigues, and J. Soares. Rare events for Cantor target sets, Preprint 2019, arXiv:1903.07200.

\bibitem{FFT1}  J. Freitas, A. Freitas and M. Todd. \emph{Hitting Times and Extreme Value Theory}.
Probab. Theory Related Fields, {\bf 147}, (3), (2010). 675--710.

\bibitem{FFT3}
A. Freitas, J. Freitas, and M. Todd. Extremal Index. \emph{Hitting Time Statistics and periodicity}.
 Adv. Math., {\bf 231}, (5), (2012), 2626-2665.

\bibitem{FFT2}  A. Freitas, F. Freitas and M. Todd. \emph{Extreme value laws in
dynamical systems for non-smooth observations}.
J. Stat. Phys.  {\bf 142}, (1), (2011), 108--126.

\bibitem{FFT5}  A. C. M. Freitas, J. M. Freitas, and M. Todd. Speed of convergence for laws of rare events and escape rates. 
Stochastic Process. Appl., {\bf 125}, (4), (2015), 1653-1687.

\bibitem{FFT4} A. C. M. Freitas, J. M. Freitas, and M. Todd. \emph{The compound Poisson limit ruling periodic extreme behaviour of non-uniformly hyperbolic dynamics}. Comm. Math. Phys., \textbf{321}, (2), (2013), 483-527.

\bibitem{FHN} J. Freitas, N. Haydn and M. Nicol. \emph{Convergence of rare events point processes to the Poisson for billiards}.  Nonlinearity, \textbf{27}, (2014), 1669-1687.


\bibitem{Galambos} J. Galambos. \emph{The Asymptotic Theory of Extreme
  Order Statistics}. John Wiley and Sons, 1978.


\bibitem{Gupta} C. Gupta. \emph{Extreme-value distributions for some classes of non-uniformly partially hyperbolic
dynamical systems}.  Ergodic Theory and Dynamical Systems. {\bf 30}, (3), (2011), 757-771.

\bibitem{GHN} C.~Gupta, M.~Holland and M.~Nicol.
\emph{Extreme value theory  and return time statistics for  dispersing billiard maps and flows,
Lozi maps and Lorenz-like maps}.
Ergodic Theory and  Dynamical Systems.  {\bf 31}, (2011),  1363--1390.

\bibitem{GNO} C.~Gupta, M.~Nicol and W.~Ott. \emph{A Borel-Cantelli lemma
  for non-uniformly expanding dynamical systems}. Nonlinearity
  \textbf{23}, (8), (2010), 1991--2008.

\bibitem{HNPV} N. Haydn, M. Nicol, T. Persson and S. Vaienti. \emph{A
  note on Borel-Cantelli lemmas for non-uniformly hyperbolic
  dynamical systems}. Ergodic Theory and Dynamical Systems {\bf 33}, (2), (2013),
  475--498.
	
	\bibitem{Haydn_Vaienti} N. Haydn and S. Vaienti. \emph{Limiting entry times distribution for arbitrary null sets}, arXiv:1904.08733v1, April 18 2019.

%\bibitem{haydn-wasilewska}
%N. T. A. Hadyn and K. Wasilewska.
%Limiting distribution and error terms for the number of visits to balls in non-uniformly hyperbolic dynamical systems. Preprint 2015.

\bibitem{Hirata} M. Hirata. \emph{Poisson Limit Law for Axiom A diffeomorphisms}. Ergodic  Theory and  Dynamical Systems. 
\textbf{13}, (3), (1993), 533--556.

\bibitem{HNT0} M. Holland, M. Nicol, A. T\"{o}r\"{o}k. \emph{Extreme value theory for non-uniformly expanding dynamical systems.}
Trans. Am. Math. Soc., {\bf 364}, (2012), 661-688.

\bibitem{HNT} M. Holland, M. Nicol, A. T\"{o}r\"{o}k. \emph{Almost
  sure convergence of maxima for chaotic dynamical systems}.
  Stochastic Process.  Appl. 126, (10), (2016), 3145--3170.

\bibitem{HRS} M. P. Holland, P. Rabassa, A. E. Sterk. Quantitative recurrence statistics and convergence to an extreme value distribution for non-uniformly hyperbolic dynamical systems. Nonlinearity, {\bf 29}, (8), (2016). 

\bibitem{HVRSB} M. Holland, R. Vitolo, P. Rabassa, A. E. Sterk, H. Broer. \emph{Extreme value laws in dynamical systems under physical observables.} Phys. D, {\bf 241}, (2012), 497-513.

\bibitem{Keller} G. Keller. \emph{Rare events, exponential hitting times and extremal indices via spectral perturbation}.
Dynamical Systems: An International Journal. \textbf{27}, (1), (2012), 11--27.

\bibitem{Kim} D.~Kim. \emph{The dynamical Borel-Cantelli lemma for
  interval maps}. Discrete Contin. Dyn. Syst. {\bf 17}, (4), (2007),
  891--900.

\bibitem{LLR} Leadbetter, M. R., Lindgren, G., and Rootzen, H.  Extremes and Related Properties of
Random Sequences and Processes. Springer-Verlag, New York, 1983.

\bibitem{V.et.al} V. Lucarini, D. Faranda, A.C. Freitas, J.M. Freitas,
  M. P.  Holland, T. Kuna, M. Nicol, M. Todd, S. Vaienti,
  \emph{Extremes and Recurrence in Dynamical Systems}, Pure and
  Applied Mathematics: A Wiley Series of Texts, Monographs, and
  Tracts, 2016.
	
	\bibitem{LFWK} V. Lucarini, D. Faranda, J. Wouters J, and T. Kuna. \emph{Towards a General Theory of Extremes for Observables of Chaotic Dynamical Systems.} Journal of Statistical Physics, {\bf  154}, (3), (2014), 723--750.
	
\bibitem{N92} H. Niederreiter. Random Number Generation and Quasi-Monte Carlo Methods, Soc. Industrial and Applied Mathematics, 1992.
	
	\bibitem{Pene_Saussol} F. P\'ene and B.  Saussol. \emph{Back to balls in billiards}. Comm. Math. Phys. {\bf 293}, (3), (2010), 837-866. 
  
\bibitem{SHRBV}	A. E. Sterk, M. P. Holland, P. Rabassa, H. W. Broer, R. Vitolo. \emph{Predictability of extreme values in geophysical models.} Nonlinear Processes in Geophysics, {\bf 19}, (2012), 529--539.
 
  \bibitem{suveges} M. S\"{u}veges, \emph{Likelihood estimation of the extremal index.} Extremes, Springer, {\bf 10}, (1-2), (2007), 
	41-55.

\bibitem{Fan_Yang} Fan Yang. \emph{Rare event processes and entry times distribution for arbitrary null sets on compact manifolds}, arxiv:1905.09956.

\bibitem{Y98} L.-S. Young.
\emph{Statistical properties of dynamical systems with some hyperbolicity}.
Ann. Math. \textbf{147}, (1998), 585--650.







\end{thebibliography}
\end{document}